\definecolor{change}{rgb}{0,.55,.55}
\newtheorem{Def}{Definition}
\newtheorem{Thm}[Def]{Theorem}
\newtheorem{Rmk}[Def]{Remark}
\newtheorem{Lemma}[Def]{Lemma}
\newtheorem{Ass}[Def]{Assumption}
\newtheorem{Prop}[Def]{Proposition}
\newcommand{\R}{\mathbb{R}}
\newcommand{\N}{\mathbb{N}}
\newcommand\quadV[1]{\left[#1\right]}
\newcommand{\Cspace}{C}
\newcommand{\Lspace}{L}
\newcommand\LinSpace[2]{\mathrm{L}\left(#1;#2\right)}
\newcommand\HilbSchm[2]{\mathrm{L}_{2}\left(#1;#2\right)}
\newcommand{\Wspace}{\mathcal{W}}
\newcommand\indicator{\mathbb{1}}
\newcommand{\Kalman}{\mathfrak{K}}
\newcommand\restr[2]{\ensuremath{\left.#1\right|_{#2}}}
\newcommand\norm[2]{\left\|#1\right\|_{#2}}
\newcommand{\VHilbert}{\mathscr{V}}
\newcommand{\HHilbert}{\mathscr{H}}
\newcommand{\UHilbert}{\mathscr{U}}
\newcommand{\Adrift}{\mathcal{A}}
\newcommand{\Bvol}{\mathcal{B}}
\newcommand{\Qcor}{\mathcal{Q}}
\newcommand{\QeigVectors}{e}
\newcommand{\QeigValues}{q}
\newcommand{\observ}{H}
\newcommand\Frechet[2]{\mathrm{D}^{1}_{#1}#2}
\newcommand\Hesse[2]{\mathrm{D}^{2}_{#1}#2}
\newcommand{\oneSidedLip}{\lambda}
\newcommand{\coerA}{\alpha_{V}}
\newcommand{\coerB}{\alpha_{H}}
\newcommand{\coerC}{\alpha_{0}}
\newcommand{\boundB}{\beta}
\title{On the mean field theory of Ensemble Kalman filters for SPDEs}
\author{Sebastian W. Ertel, TU Berlin}
\begin{document}

	\maketitle
	
	\begin{abstract}
		This paper is concerned with the mathematical analysis of continuous time Ensemble Kalman Filters (EnKBFs) and their mean field limit in an infinite dimensional setting. The signal is determined by a nonlinear Stochastic Partial Differential Equation (SPDE), which is posed in the standard variational setting. Assuming global one-sided Lipschitz conditions and finite dimensional observations we first prove the well posedness of both the EnKBF and its corresponding mean field version. We then investigate the quantitative convergence  of the EnKBF towards its mean field limit, recovering the rates suggested by the law of large numbers for bounded observation functions. The main tool hereby are exponential moment estimates for the empirical covariance of the EnKBF, which may be of independent interest. 
		In the appendix of the paper we investigate the connection of the mean field EnKBF and the Stochastic Filtering Problem. In particular we derive the Feedback Particle Filter for infinite dimensional signals and show that the mean field EnKBF can viewed as its constant gain approximation.
	\end{abstract}

	\tableofcontents
	
	\section{Introduction}
	
	In recent years data driven methods have become increasingly relevant in scientific computing. In particular data assimilation, that is the (optimal) integration of real world data into mathematical models, has become a popular research topic for practitioners and mathematicians alike. As it shares similar objectives to stochastic filtering, which is essentially the discipline of Bayesian estimation of dynamic processes from noisy, potentially incomplete data, many algorithms originating in filtering are used for data assimilation tasks. Vice versa, algorithms for data assimilation of dynamical processes can be viewed through the lens of filtering, the mathematical model is then referred to as the signal and the available data as the observations.\newline
		
	One such algorithm is the Ensemble Kalman Filter (EnKF), which was introduced by Geir Evensen in 94 \cite{Evensen} and employs an ensemble of  interacting particles to estimate the state of a dynamical system.  Since its inception many different variants of the EnKF have been introduced. For an overview and historical context we refer to \cite{ReichStuart}, \cite{Evensen Overview} or \cite{VanLeeuwen20}. The EnKF has by now become one of the most widely used techniques for data assimilation in high dimensional settings, particularly popular amongst practitioners in the geosciences and numerical weather forecasting. Besides its usage for state estimation in dynamical systems, the EnKF and related algorithms have also been applied to parameter estimation in inverse problems \cite{OliverEtAl},\cite{SchillingsStuart}.\newline
	
	While the original EnKF is a discrete time recursion, continuous time counterparts referred to as Ensemble Kalman--Bucy Filters (EnKBFs), were first formulated in various works by Bergemann and Reich \cite{BergemannReich}, and have by now been firmly established in the literature. In many cases they can also be shown to be the limit of their discrete time counterparts for vanishing step size, see e.g. \cite{Lange} and the references found therein. In this paper we will only consider the basic continuous time framework in an infinite dimensional setting. The signal $u$ shall be determined by a Stochastic Partial Differential Equation (SPDE)
	\begin{align}\tag{S}\label{signal}
		\mathrm{d}u_t=\Adrift(u_t)\mathrm{d}t+\Bvol(u_t)\mathrm{d}W_t
		\quad\text{with initial condition}~u_0,
	\end{align}
	posed in the standard variational framework found in e.g. \cite{Pardoux - French},\cite{RoecknerPrevot}. Hereby the noise $W$ is assumed to be some (infinite dimensional) Wiener process and the drift term $\Adrift$ is a (differential) operator satisfying some global one-sided Lipschitz property, such as e.g.
	\begin{itemize}
		\item the Laplacian $\Adrift(v):=\Delta v$, in this case \eqref{signal} is a stochastic heat equation.
		
		\item the $p$-Laplacian $\Adrift(v)
		:=\Delta \left(v |v|^{p-1}\right)$ for any $p>1$, in this case \eqref{signal} is a stochastic porous medium equation.
		
		\item a semilinear reaction-advection-diffusion operator of the form $\Adrift(v):=\mathrm{div}\left( \mathfrak{a}\nabla v\right) + \mathrm{div}\left(\mathfrak{b}~v\right) + f(v) + g(v)$, where $f$ is globally Lipschitz continuous and $g$ is a monotone decreasing function. In particular stochastic reaction-diffusion equations with double-well potentials such as Allen--Cahn equations are covered by our theory. 
	\end{itemize}

	 The observation data $Y$ shall be a continuous time process and is assumed to depend on the signal in the following way
	\begin{align}\tag{O}\label{observation}
		\mathrm{d}Y_t=\observ(u_t)\mathrm{d}t + \Gamma_t \mathrm{d} V_t
		,\quad Y_0=0
		.
	\end{align}

	We only consider finite dimensional observations taking values in $\R^{d_y}$ for some $d_y\in\N$. This is the more practically relevant case and also avoids discussions of the regularity/degeneracy of the observation noise $V$, which is assumed to be white, i.e. some finite dimensional standard Brownian motion. A more thorough discussion of the setting we consider, and the assumptions we have to make, is found in section \ref{section problem setting}.\newline
	
	The EnKBF we consider in this paper is the following system of interacting SPDEs
	\begin{align}\label{EnKBF - introduction}
		\begin{split}
			\mathrm{d}u^i_t
			&=\Adrift(u^i_t)\mathrm{d}t+\Bvol(u^i_t)\mathrm{d}W^{i}_t
			\\&\phantom{=}+
			\frac{1}{N} \sum_{j=1}^{N} u^{j}_t \left(\observ(u^{j}_t)-\frac{1}{N}\sum_{k=1}^{N}\observ(u^{k}_t)\right)' R^{-1}_t\left(\mathrm{d}Y_t-\frac{\observ(u^i_t)+\frac{1}{N}\sum_{k=1}^{N}\observ(u^{k}_t)}{2}
			\mathrm{d}t
			\right)
		\end{split}
	\end{align}
	for $i=1,\cdots,N$, where $(W^i)_{i=1,\cdots,N}$ are independent copies of the Wiener process $\bar{W}$. The EnKBF combines the signal SPDE \eqref{signal} with measurement data $Y$ in order to improve the predictive capabilities of the model. This is done by projecting the discrepancy $\mathrm{d}Y_t-\frac{\observ(u^i_t)+\frac{1}{N}\sum_{k=1}^{N}\observ(u^{k}_t)}{2}
	\mathrm{d}t$ between the actual measured observation increment and the predicted increment onto the interacting ensemble $\left(u^{i}\right)_{i=1,\cdots,N}$. This creates a reaction term that is added onto the signal SPDE and nudges it to better fit the observation data.\newline
	
	 \begin{Rmk}
	 	The system \eqref{EnKBF - introduction} is often referred to as the deterministic EnKBF \cite{On the mathematical theory of linear EnKBF}, which is the continuous time counterpart of the Ensemble Square Root Filter \cite{StannatLange}/deterministic EnKF by Sakov and Oke \cite{SakovOke}. Our main results can be generalized to other types of EnKBFs, in particular the more classical version which involves randomness in the innovation term.
	 \end{Rmk}
 
 	In this paper we investigate the mean field theory of the EnKBF \eqref{EnKBF - introduction} in the variational SPDE setting. The mean field perspective for Ensemble Kalman Filters has become a popular research topic in recent years as it provides a simplified framework for the derivation and mathematical analysis of such algorithms \cite{ReichStuart}. In particular the mean field limit of the EnKBF,
 	\begin{align}\label{mean field EnKBF - introduction}
 		\begin{split}
 			\mathrm{d}u_t
 			&=\Adrift(u_t)\mathrm{d}t+\Bvol(u_t)\mathrm{d}W_t
 			\\&\phantom{=}+
 			\mathbb{Cov}\left[u_t,\observ(u_t)~\bigg|~Y_s,s\leq t\right] R^{-1}_t\left(\mathrm{d}Y_t-\frac{\observ(u^i_t)+
 			\mathbb{E}\left[\observ(u_t)~\big|~Y_s,s\leq t\right]
 			}{2}
 			\mathrm{d}t
 			\right),
 		\end{split}
 	\end{align}
 	 which is  McKean--Vlasov equation that is henceforth just referred to as the mean field EnKBF, gives a connection to solutions of the filtering problem through what is known as the constant gain approximation \cite{TaghvaeiDeWiljesEtAl} of the Feedback Particle Filter (FPF). First quantitative mean field limits of Ensemble Kalman Filters were obtained in the time discrete setting by \cite{LeGlandMonbetTran} and \cite{MandelCobbBeezley}. For the continuous time equations a first quantitative propagation of chaos result was proven in the seminal work \cite{DelMoralTugaut} in a finite dimensional linear Gaussian setting, i.e. when both the signal \eqref{signal} is an Ornstein--Uhlenbeck process and the observation function $\observ$ is assumed to be linear. In this linear Gaussian setting the mean field limit plays a special role as it coincides with the optimal filter described by the classical Kalman--Bucy equations.
 	Several works have expanded on the results of \cite{DelMoralTugaut}, proving time uniform propagation of chaos even in cases of unstable signal dynamics. For an overview of the theory of the EnKBF in the linear Gaussian setting we refer the reader to \cite{On the mathematical theory of linear EnKBF}.\newline
	
	In nonlinear (continuous time) settings only a few convergence results for finite dimensional signals seem to have been obtained so far. The paper \cite{StannatLange} proved a propagation of chaos result with a combination of synchronous coupling and stopping arguments. However they were not able to quantify the convergence rates and had to assume the existence and uniqueness of the mean field limit. The well posedness of of the McKean--Vlasov equation describing the mean field limit of the EnKBF was shown in \cite{Coghi et al} and \cite{ErtelStannat}. Both papers also showed propagation of chaos for the EnKBF. \cite{ErtelStannat} proved a non quantitative convergence result, similar to the aforementioned \cite{StannatLange}. On the other hand \cite{Coghi et al} were even able to prove a mean field limit with logarithmic rates. However they required bounded signal and observation functions, and assumed that the observation data was smoothed (Lipschitz continuous in time). Even though logarithmic convergence is far slower than the desired rates corresponding to the law of large numbers, the result of \cite{Coghi et al} seem to be the best estimate existing so far for nonlinear signals. For related sampling algorithms such as the Ensemble Kalman Sampler, Ensemble Kalman Inversion and mean field Langevin Dynamics various works \cite{BhandariPidstrigachReich}\cite{DingLi1}\cite{DingLi2}\cite{Vaes}\footnote{The preprints \cite{BhandariPidstrigachReich} and \cite{Vaes} actually were uploaded only shortly after the first version of this paper.} have shown quantitative propagation of chaos. While such systems are related to the Ensemble Kalman methodology, the structural differences to the EnKBF seems to not let us simply carry over the employed techniques to prove convergence.\newline
	
	In the infinite dimensional continuous time setting the mean field theory seems to have not been covered by the existing literature, the mathematical analysis, which was initiated in \cite{LawKellyStuart}, so far seems to have focussed only on the interacting particle system \eqref{EnKBF - introduction}. The aim of this paper therefore is to bridge this gap and provide a rather complete mean field theory for EnKBFs in the case of SPDE signals. Throughout this work we shall assume that both the signal coefficients and the observation function satisfy some global Lipschitz conditions described in detail in section \ref{section problem setting}. In this setting our main contributions are the following:
	\begin{itemize}
		\item In section \ref{section Analysis of particle system} we first show the well posedness of the EnKBF \eqref{EnKBF - introduction}. While \cite[Theorem 6.2]{LawKellyStuart} considered well posedness of EnKBFs, even without global Lipschitz conditions, they assumed the existence and uniqueness of solutions and showed a priori estimates, preventing blowups. Furthermore the EnKBF \eqref{EnKBF - introduction} falls outside the standard variational SPDE theory, as found e.g. in \cite{LiuRoeckner-localLipschitzSPDEs}, due to missing growth conditions. We prove well posedness in Theorem \ref{Lemma well posedness particle system}. The main tool hereby are a priori estimates of the empirical variance akin to the law of total variance of conditional expectations. By also controlling the quadratic variation of the empirical variance we are able to prove some exponential moment bounds in Proposition \eqref{Lemma exponential moments}. These bounds are crucial later on for quantitative propagation of chaos, but may also be of independent interest.
		
		\item Next, in section \ref{section mf EnKBF}, we prove the well posedness of the McKean--Vlasov equation describing the mean field limit of the EnKBF in Theorem \ref{well posedness mf EnKBF}. Due to possible non-equivalence of norms in infinite dimensional spaces, we can not simply adapt the existing proofs of the finite dimensional setting, such as e.g. found in \cite{ErtelStannat}, and instead make a fixed point argument with respect to the observation function $\observ$.
		
		\item Finally, in section \ref{section prop of chaos}, we the convergence of the EnKBF towards the mean field limit. First, in Theorem \ref{prop of chaos - implicit rates}, we extend the stopping arguments of \cite{StannatLange} to our infinite dimensional setting to prove convergence with implicit rates. The main result of this paper is found in Theorem \ref{Thm optimal convergence}, where we use the exponential moment bounds of subsection \ref{subsection exponential moment bounds} to prove convergence rates in accordance to the law of large numbers under the additional assumption of bounded observation functions. Even for finite dimensional signals this significantly improves on the at best logarithmic rates shown so far in the literature and actually does so under far less restricting assumptions regarding the signal coefficients and the observation process.
	\end{itemize}
	
	Furthermore we include an appendix which details the connections of the (mean field) EnKBF to the classical stochastic filtering problem. Therefore in section \ref{section Kushner Stratonovich} we briefly recall the Kushner--Stratonovich equation, describing the posterior distribution $(\eta)_{t\geq 0}$, which for continuous time stochastic filtering problems is given by
	\begin{align}\tag{P}\label{posterior}
		\eta_t:=\mathbb{P}\left(~u_t\in\cdot~|~Y_{s},~s\leq t ~\right),~\text{for}~t\geq 0.
	\end{align}
	We also briefly recall the law of total variance in Bayesian statistics, which bounds the posterior variance by variance of the prior. Interestingly, even though generally the (time-)marginal distributions of the (mean field) EnKBF do not coincide with the posterior, except for the linear Gaussian case, it still satisfies similar variance bounds, which actually are the main tool we use for its analysis. Next, in section \ref{section FPF}, we derive the Feedback Particle Filter (FPF)\cite{YangMehtaMeyn1}\cite{YangMehtaMeyn2}\cite{CrisanXiong}, a McKean--Vlasov representation of the Kushner--Stratonovich equation and thus also the posterior, in our infinite dimensional setting. The method hereby is similar to what was done in \cite{PathirajaStannatReich} for finite dimensional signals dimensions. We then show that the mean field EnKBF is the constant gain approximation of the FPF, which was first noted in finite dimensions by \cite{TaghvaeiDeWiljesEtAl}. This connection to the stochastic filtering problem is a strong motivation for the mean field perspective on Ensemble Kalman Filters, as it provides a path to compare them with the optimal Bayesian estimator. Indeed only recently such investigations were started by \cite{CarrilloHoffmannStuartVaes}, estimating the statistical accuracy of discrete time mean field Ensemble Kalman Filters.  Most of the results/methods of the appendix, in particular section \ref{section FPF}, are just simple generalisations of established results in finite dimensional settings. Nevertheless it provides a clear motivation for the importance of the mean field theory of EnKBFs discussed in the main part and shows how such equations can be derived from the Stochastic Filtering Problem.

	\section{Problem setting, assumptions and notations}\label{section problem setting}

	For the signal \eqref{signal} we consider SPDEs in a variational setting as they are found in e.g. \cite{Pardoux},\cite{RoecknerPrevot}. To fix notation and for the convenience of the reader we repeat some key concepts and results of this field in this section. For a more detailed introduction to this topic we refer the reader to \cite{RoecknerPrevot}.\newline
	
	Let $\HHilbert$ be a Hilbert space and $\VHilbert$ be a Banach space that form a Gelfand triple \cite[Section 4.1]{RoecknerPrevot}  $\VHilbert\hookrightarrow\HHilbert \hookrightarrow \VHilbert'$. We denote by $\prescript{}{\VHilbert'}{\left\langle \cdot,\cdot\right\rangle_{\VHilbert}}$ the natural pairing of the Banach space $\VHilbert$ and its dual space $\VHilbert'$. Similarly $\left\langle \cdot,\cdot\right\rangle_{\HHilbert}$ denotes the inner product of the Hilbert space $\HHilbert$. As usual the corresponding norms on $\VHilbert,\HHilbert$ and $\VHilbert'$ are denoted by $\norm{\cdot}{\VHilbert},\norm{\cdot}{\HHilbert}$ and $\norm{\cdot}{\VHilbert'}$. The absolute value as well as the standard Euclidian norm on finite dimensional spaces are both denoted denoted by $|\cdot|$. Furthermore we assume that that there exists an orthonormal basis of $\HHilbert$, denoted by $\left(\nu_k\right)_{k\in\N}\subset\HHilbert$ such that $\nu_k\in\VHilbert$ for all $k\in\N$.\newline
	
	Let $\UHilbert$  be some given separable real Hilbert space and let $\left(\mathfrak{F}_t\right)_{t\geq 0}$ be some given filtration on a probability space $\left(\Omega,\mathfrak{F}_{\infty},\mathbb{P}\right)$. We consider the $\UHilbert$-valued and $\mathfrak{F}$-adapted $\Qcor$-Wiener process  $(W_t)_{t\geq 0 }$ with finite trace. To this end assume that $\Qcor$ is a self-adjoint, positive semidefinite linear operator on $\UHilbert$, with finite trace $\mathrm{tr}_{\UHilbert}\Qcor<+\infty$ and Eigenvalues $\left(\QeigValues_k\right)_{k\in\N}$ with corresponding orthonormal Eigenbasis $\left(\QeigVectors_k\right)_{k\in\N}$. Then there are exist independent $\mathfrak{F}-$adapted Brownian motions $(w^k)_{k\in\N}$, such that
	\begin{align*}
		W_t=\sum_{k\in\N}\sqrt{\QeigValues_k}\QeigVectors_k w^k_t~\text{for all times}~t\geq 0.
	\end{align*}

	\begin{Def}
		We will always identify the Hilbert spaces $\HHilbert,\UHilbert$ with their duals $\HHilbert',\UHilbert'$. For any operator $B$ acting on Hilbert spaces we denote its adjoint by $B'$. The adjoint of an element $u\in\HHilbert$ is just it's image under the Riesz embedding, i.e. $u':=\left\langle u, \cdot\right\rangle_{\HHilbert}$. This notation is consistent with finite dimensional settings. We note that therefore $u u'\in\LinSpace{\HHilbert}{\HHilbert}$ defines a bounded linear operator on $\HHilbert$.
	\end{Def}
	
	To rigorously formulate the signal \eqref{signal} as a variational SPDE on the Gelfand triple $(\VHilbert,\HHilbert,\VHilbert')$, we make the following standard assumptions \cite[page 56]{RoecknerPrevot} that shall hold throughout this paper.

	\begin{Ass}[Signal assumptions]\label{signal assumptions}
		Denote by $\HilbSchm{\UHilbert}{\HHilbert}$ the space of Hilbert--Schmidt operators. That is the space of all linear operators $B:\UHilbert\to\HHilbert$, such that their Hilbert--Schmidt norm $\norm{B}{\HilbSchm{\UHilbert}{\HHilbert}}^2:=\sum_{k\in\N}\norm{B\QeigVectors_k}{\HHilbert}^2$ is finite.\\ 
		We assume that $\Adrift:\VHilbert\to \VHilbert'$ and $\Bvol:\VHilbert\to \HilbSchm{\UHilbert}{\HHilbert}$ satisfy the following conditions:
		\begin{enumerate}
			\item \textbf{Hemicontinuity:} For all $u,v,w\in \VHilbert$ the mapping
			\begin{align*}
				r\to\prescript{}{\VHilbert'}{\left\langle \Adrift(v+r u),w\right\rangle_{\VHilbert}}
				~
				\text{is continuous.}
			\end{align*}
			
			\item \textbf{Weak monotonicity/one-sided Lipschitz:} There exists $\oneSidedLip>0$ such that for all $u,v\in \VHilbert$
			\begin{align}\label{one sided Lipschitz}
				2\prescript{}{\VHilbert'}{\left\langle \Adrift(u)-\Adrift(v),u-v\right\rangle_{\VHilbert}}
				+
				\norm{\left(\Bvol(u)-\Bvol(v)\right)\sqrt{\Qcor}}{ \HilbSchm{\UHilbert}{\HHilbert} }^2
				\leq
				\oneSidedLip \norm{u-v}{\HHilbert}^2.
			\end{align}
			
			\item \textbf{Coercivity:} There exist constants $\coerA>0,\coerB,\coerC\in\R$ and $\alpha_{\mathrm{p}}\in(1,+\infty)$, such that $ u\in \VHilbert$
			\begin{align}\label{growth condition}
				2\prescript{}{\VHilbert'}{\left\langle \Adrift(u),u\right\rangle_{\VHilbert}}
				+
				\norm{\Bvol(u)\sqrt{\Qcor}}
					{ \HilbSchm{\UHilbert}{\HHilbert}  }^2
				\leq
				-\coerA\norm{u}{\VHilbert}^{\alpha_{\mathrm{p}}} + \coerB \norm{u}{\HHilbert}^2 + \coerC.
			\end{align}

			\item \textbf{Boundedness:} There exists a constant $c_{\Adrift}>0$ such that
			\begin{align*}
				\norm{\Adrift(u)}{\VHilbert'}
				\leq c_{\Adrift}\left(1+\norm{u}{\VHilbert}\right)
				~\forall u\in \VHilbert.
			\end{align*}
		\end{enumerate}
	\end{Ass}

	Next let us briefly discuss which (S)PDEs can be treated in this variational framework.\newline
	\begin{Rmk}[Possible Signals]\label{Examples that are}
		As mentioned in the introduction a classical example of a differential operator that satisfies Assumptions \ref{signal assumptions} is the $p$-Laplacian
		\begin{align}\label{p-Laplace}
			\Adrift(v)
			:=\Delta \left(v |v|^{p-1}\right)
		\end{align}
		for any $p\in [1,+\infty)$ on a given bounded domain $\Lambda$ with Dirichlet boundary conditions. In this case $\VHilbert:= \Wspace^{1,p}_{0}(\Lambda)$ is the classical $p$-integrable, first order Sobolev space of functions that vanish on the boundary. The Hilbert space is then set to $\HHilbert:=\Lspace(\Lambda)$. Neumann, or mixed boundary conditions can be treated as well. Thus we can allow for signals that are given by a (noisy) heat or porous media equation. 
		Another differential operator satisfying our assumptions is given by 
		\begin{align*}
			\Adrift(v)
			:=
			-\Delta v - \mathfrak{a} v^3 +  \mathfrak{b} v + \mathfrak{c},
		\end{align*}
		where $\mathfrak{a}\geq 0,~\mathfrak{b},\mathfrak{c}\in\R$, for Dirichlet, Neumann or mixed boundary conditions on suitable domains. Therefore we can treat signals evolving by a stochastic reaction diffusion equation with a double well potential. In particular Allen--Cahn equations can be treated. For a more detailed discussion we refer to \cite[Section 4.1]{RoecknerPrevot}.\newline
	\end{Rmk}
	
	Under Assumption \ref{signal assumptions} it can be shown \cite[Theorem 4.2.4]{RoecknerPrevot} that  
	for a given (random) initial condition $u_0\in \HHilbert$, measurable w.r.t. $\mathfrak{F}_0$ and square-integrable $\mathbb{E}\left[\norm{u_0}{\HHilbert}^2\right]<+\infty$, the signal SPDE \eqref{signal} has a unique strong solution $\left(u_t\right)_{t\geq 0}$, meaning that
	\begin{itemize}
		\item $u$ is adapted to the filtration $\mathfrak{F}$.
		
		\item for any $T<+\infty$ it holds that
		\begin{align*}
			u\in
			\Lspace^2\left([0,T]\times\Omega;\HHilbert\right)
			\cap
			\Lspace^{\alpha_{\mathrm{p}}}\left(
				[0,T]\times\Omega;\VHilbert
			\right)
		\end{align*}
		
		\item for any $t\geq 0$ and any $v\in\VHilbert$, it holds that
		\begin{align*}
			\left\langle u_t,v\right\rangle_{\HHilbert}
			=
			\left\langle u_t,v\right\rangle_{\HHilbert}
			+
			\int_{0}^{t}	
			\prescript{}{\VHilbert'}{\left\langle \Adrift(u_s),v\right\rangle_{\VHilbert}}
			\mathrm{d}s
			+
			\left\langle \int_{0}^{t}\Bvol(u_s)\mathrm{d}W_s,v\right\rangle_{\HHilbert}
			\quad
			\mathbb{P}-a.s.
			.
		\end{align*}
	\end{itemize}
	This unique strong solution $u$ will henceforth be referred to as the signal. In particular one can show that it is an $\HHilbert$-Markov process \cite[Proposition 4.3.5]{RoecknerPrevot} satisfying the following pathwise moment estimate $\mathbb{E}\left[\sup_{t\leq T}\norm{u_t}{\HHilbert}^2\right]<+\infty$ holds for all $T>0$.\newline
	
	An important tool to our analysis is Itô's lemma for variational SPDEs \cite[page 136]{Pardoux}, first derived in \cite{Pardoux - French}. For later reference let us specify here for which functions one can use Itô's lemma.
	
	\begin{Def}\label{Ito function}
		Any function $\phi:\HHilbert\to\R$ is said to be an Itô function, if
		\begin{enumerate}
			\item $\phi$ is twice Fréchet differentiable, with the first two derivatives denoted by $\Frechet{\HHilbert}{\phi}$ and $\Hesse{\HHilbert}{\phi}$.
			
			\item  All of $\phi$, $\Frechet{\HHilbert}{\phi}$ and $\Hesse{\HHilbert}{\phi}$ are locally bounded.
			
			\item For any operator $\Qcor:\HHilbert\to\HHilbert$ that is of trace class, the functional $v\mapsto\mathrm{tr}_{\HHilbert}\left[\Qcor\Hesse{\HHilbert}{\phi}(v)\right]$ is continuous on $\HHilbert$.
			
			\item For any $v\in\VHilbert$ it holds that $\Frechet{\HHilbert}{\phi}(v)\in\VHilbert$ and the map $\restr{\Frechet{\HHilbert}{\phi}}{\VHilbert}:\VHilbert\to\VHilbert$ is continuous when the domain is equipped with the strong and the image is equipped with the weak topology.
			
			\item There exists a constant $C>0$ such that $\norm{\Frechet{\HHilbert}{v}}{\VHilbert}\leq C\left(1+\norm{v}{\VHilbert}\right)$ for all $v\in\VHilbert$.
		\end{enumerate}
		If an Itô function $\phi$ is twice continuously Fréchet differentiable with compact support, we refer to it as an Itô testfunction.
	\end{Def}

	One important example of an Itô function is the squared norm $\norm{.}{\HHilbert}^2$. Furthermore one can use Itô's lemma to show the following basic identities characterizing the distribution of the signal $u$.
	\begin{itemize}
		\item The signal mean $m_t:=\mathbb{E}\left[u_t\right]$ satisfies the differential equation
		\begin{align}\label{equation signal mean}
			\partial_t m_t=\mathbb{E}\left[\Adrift(u_t)\right].
		\end{align}
	
		\item The signal covariance operator $P_t:=\mathbb{Cov}\left[u_t\right]:=\mathbb{E}\left[\left(u_t-m_t\right)\left(u_t-m_t\right)'\right]$ satisfies 
		\begin{align}\label{equation signal covariance}
			\begin{split}
				\partial_t
				\left\langle v,
				\mathbb{Cov}\left[ u_t\right] w\right\rangle_{\HHilbert}
				&=
				\mathbb{E}\left[\left\langle v,u_t-m_t\right\rangle_{\HHilbert}
				\prescript{}{\VHilbert'}{\left\langle \Adrift(u_t)-\Adrift(m_t),w
					\right\rangle_{\VHilbert}}\right]
				\\&\phantom{=}+
				\mathbb{E}\left[\left\langle w,u_t-m_t\right\rangle_{\HHilbert}
				\prescript{}{\VHilbert'}{\left\langle \Adrift(u_t)-\Adrift(m_t),v
					\right\rangle_{\VHilbert}}\right]
				\\&\phantom{=}+
				\left\langle v, \mathbb{E}
				\left[\Bvol(u_t)\sqrt{\Qcor}
				\left(\Bvol(u_t)\sqrt{\Qcor}\right)'
				\right] w
				\right\rangle_{\HHilbert}
			\end{split}
		\end{align}
		for all $v,w\in \VHilbert$.
		
		\item The generator of the signal, denoted by $\mathcal{L}$, is given by
		\begin{align}\label{signal generator}
			\mathcal{L}\phi
			=
			\prescript{}{\VHilbert'}{{\left\langle \Adrift(\cdot),\Frechet{\HHilbert}{\phi}\right\rangle_{\VHilbert}}}
			+
			\frac{1}{2}\mathrm{tr}_{\HHilbert}\left[
			\left(\Hesse{\HHilbert}{\phi}\right)~
			\Bvol(\cdot)\sqrt{\Qcor}\left(\Bvol(\cdot)\sqrt{\Qcor}\right)'
			\right]
		\end{align}
		for every Itô testfunction $\phi$ as per Definition \ref{Ito function}.
	\end{itemize}

	Let us address the observations next. As stated in the introduction we consider continuous, $d_y$-dimensional observations given by the differential equation \eqref{observation}. We make the following standard assumptions for the coefficients $\observ$ and $\Gamma$.\newline
	
	\begin{Ass}[Observation assumptions]\label{Assumption observation}
		The observation function $\observ: \HHilbert\to \R^{d_y}$ is assumed to be globally Lipschitz continuous and $\Gamma\in\Cspace^{0}\left([0,+\infty),\R^{d_y\times d_v}\right)$. Furthermore $V$ is a $d_v$-dimensional, $\mathfrak{F}$-adapted standard Brownian motion, independent of the signal $u$ and its driving noise $W$. As usual we set $R_t:=\Gamma_t\Gamma^{\mathrm{T}}_t$ and assume that $R_t$ is invertible for  all times $t\geq 0$, in particular $\sup_{t\leq T}\left|R_t^{-1}\right|<+\infty$ for all $T<+\infty$.\newline
	\end{Ass}

	In the following, variance bounds for both the posterior \eqref{posterior} and the law of the EnKBF will play a crucial role in our analysis. This is why, besides the standard assumptions for the signal \ref{signal assumptions} and the observations \ref{Assumption observation},  we make the following additional assumption, which will give us a priori bounds for the signal variance.
	
	\begin{Ass}[Bounded signal diffusion]\label{bounded signal diffusion}
		There exists a constant $\boundB<+\infty$ such that
		\begin{align*}
			\sup_{v\in \VHilbert}
			\mathrm{tr}_{\HHilbert}
			\left[\Bvol(v)\sqrt{\Qcor}
			\left(\Bvol(v)\sqrt{\Qcor}\right)'
			\right] 
			\leq\beta.
		\end{align*}
	\end{Ass}
	
	Note that for constant $\Bvol$, i.e. for additive noise in the signal \eqref{signal}, this assumption is fulfilled, as by the Parseval lemma
	\begin{align*}
		\mathrm{tr}_{\HHilbert}
		\left[\Bvol\sqrt{\Qcor}
		\left(\Bvol\sqrt{\Qcor}\right)'
		\right] 
		&=
		\sum_{k\in\N} \left\langle\nu_k\Bvol\Qcor\Bvol'\nu_k\right\rangle_{\HHilbert}
		=
		\sum_{k\in\N}\sum_{l\in\N}
		\QeigValues_l \left\langle\QeigVectors_l,\Bvol'\nu_k\right\rangle_{\HHilbert}^2
		=
		\sum_{l\in\N}
		\QeigValues_l \norm{\Bvol\QeigVectors_l}{\HHilbert}^{2}
		\\&\leq
		\sup_{l\in\N} \QeigValues_l~
		\norm{\Bvol}{\HilbSchm{\UHilbert}{\HHilbert}}^2
		\leq
		\mathrm{tr}\left[\Qcor\right]~\norm{\Bvol}{\HilbSchm{\UHilbert}{\HHilbert}}^2<+\infty.
	\end{align*}

	Finally, to keep formulas short, let us make the following notational convention for conditional expectations that shall hold throughout this paper.
	
	\begin{Def}\label{Definition - conditional expectation abbrev}
		For any point in time $t\geq 0$ and any (sufficiently integrable) $\mathfrak{F}$-adapted, $\HHilbert$-valued process $\left(v_s\right)_{s\geq 0}$ we denote by
		\begin{align*}
			\mathbb{E}_{Y}\left[v_t\right]
			:=
			\mathbb{E}\left[~v_t~\Big|~Y_s,~s\leq t~\right]
		\end{align*}
		its conditional expectation w.r.t. past observations.
	\end{Def}

	\section{Analysis of the EnKBF}\label{section Analysis of particle system}
	
	For the sake of brevity in formulas we make the following definitions.
	\begin{Def}
		For any $\mathfrak{v}=(v_1,\cdots,v_N)\in \HHilbert^{N},~N\in\N$ we set
		\begin{align}\label{Definition empirical moments}
			\begin{split}
				\mathbb{E}^{N}\left[\mathfrak{v}\right]
				&:=
				\frac{1}{N}\sum_{i=1}^{N} v^i
				,~			
				\mathbb{E}^{N}_{\observ}\left[\mathfrak{v}\right]
				:=
				\frac{1}{N}\sum_{i=1}^{N} \observ(v^i)
				\\
				\mathbb{C}^{N}_{\observ}[\mathfrak{v}]
				&:=
				\frac{1}{N}\sum_{i=1}^{N} \left(v^{i}-\mathbb{E}^{N}\left[\mathfrak{v}\right]\right)~ \left(\observ(v^i)-\mathbb{E}^{N}_{\observ}\left[\mathfrak{v}\right]\right)'.
			\end{split}
		\end{align}
		We use the normalization by $1/N$ for the empirical covariance instead of the usual unbiased normalization by $1/(N-1)$ just for notational convenience in the calculations that are to follow.\newline
	\end{Def}
	
	The EnKBF \eqref{EnKBF - introduction} can then be rewritten more compactly into
	\begin{align}\label{nonlinear EnKBF - particle approximation}
		\begin{split}
			\mathrm{d}u^i_t
			&=\Adrift(u^i_t)\mathrm{d}t+\Bvol(u^i_t)\mathrm{d}\bar{W}^{i}_t
			\\&\phantom{=}+
			\mathbb{C}^{N}_{\observ}\left[\mathfrak{u}^N_t\right] R^{-1}_t\left(\mathrm{d}Y_t-\frac{\observ(u^i_t)+\mathbb{E}^{N}_{\observ}\left[\mathfrak{u}^N_t\right]}{2}
			\mathrm{d}t
			\right),
			~\text{for}~i=1,\cdots,N
		\end{split}
	\end{align}
	where $(\bar{W}^i)_{i=1,\cdots,N}$ are independent copies of the Wiener process $W$, driving the signal \eqref{signal}. The initial conditions $u^i_0$ are independent copies of $u_0$ and we denote $\mathfrak{u}^N_t:=(u^1_t,\cdots,u^{N}_t)\in\HHilbert^{N}$.\newline

	While \eqref{nonlinear EnKBF - particle approximation} is just a system of interacting ordinary SPDEs for which local one-sided Lipschitz conditions are enough to derive well posedness \cite{LiuRoeckner-localLipschitzSPDEs}, it does not seem to satisfy the usual growth conditions for unbounded observation functions $\observ$. In \cite{StannatLange} the well posedness was proven for the finite dimensional setting by showing that blow ups do not occur in finite times. This is of course not sufficient to conclude well posedness in infinite dimensions. Instead we employ a partial stopping argument. A priori bounds for the empirical ensemble variance will play a key role in our proof, and so, to keep formulas simple, we make the following definitions<.

	\begin{Def}\label{Definition ensemble variance}
		For any $N\in\N$ and any ensemble $\mathfrak{v}=(v^1,\cdots,v^N)\in \HHilbert^{N}$ of $N$ elements of $\HHilbert$, we define the empirical variance $\sigma^N[\mathfrak{v}]$ by
		\begin{align*}
			\sigma^N[\mathfrak{v}]
			:=
			\frac{1}{N}\sum_{i=1}^{N}\norm{v^i_s-\mathbb{E}^N[\mathfrak{v}]}{\HHilbert}^2.
		\end{align*}
		And similarly we define the empirical observed variance by
		\begin{align*}
			\sigma^{N,\observ}[\mathfrak{v}]
			:=
			\frac{1}{N}\sum_{i=1}^{N}\left|\observ(v^i_s)-\mathbb{E}^N_{\observ}[\mathfrak{v}]\right|^2.
		\end{align*}
	\end{Def}

	\subsection{Well posedness}\label{subsection well posedness of particle EnKBF}
	
	Furthermore we make the following definition, which we will use to switch of parts of the dynamics of the (mean field) EnKBF.
	\begin{Def}\label{Definition - smoothed indicator}
		For any $k\in\N$ we denote by $\tilde{\indicator}_k$ a smoothed version of the indicator function $\indicator_{[0,k]}$, such that $\indicator_{[0,k]}\leq\tilde{\indicator}_k\leq\indicator_{[0,k+1]}$.
	\end{Def}
	
	We are now in the position to formulate and prove the following well posedness result.

	\begin{Thm}\label{Lemma well posedness particle system}
		If the conditions in Assumption \ref{signal assumptions} and \ref{bounded signal diffusion} are satisfied, there exists a unique (strong) solution $\mathfrak{u}^N=\left(u^1,\cdots,u^{N}\right)$ to the nonlinear EnKBF \eqref{nonlinear EnKBF - particle approximation}. I.e. $\mathfrak{u}^N$ is an $\mathfrak{F}$-adapted, $\HHilbert^{N}$-valued process, such that for any $T>0$ and any $i=1,\cdots,N$
		\begin{align*}
			u^i\in
			\Lspace^2\left([0,T]\times\Omega;\HHilbert\right)
			\cap
			\Lspace^{\alpha_{\mathrm{p}}}\left(
			[0,T]\times\Omega;\VHilbert
			\right)
		\end{align*}
		and for any $v\in\VHilbert$
		\begin{align*}
			\left\langle u^i_T, v \right\rangle_{\HHilbert}
			&=
			\left\langle u^i_0, v \right\rangle_{\HHilbert}
			+
			\int_{0}^{T}
			\prescript{}{\VHilbert'}{\left\langle\Adrift(u^i_t),v\right\rangle_{\VHilbert}}
			\mathrm{d}t
			+
			\left\langle\int_{0}^{T}
			\Bvol(u^i_t)\mathrm{d}\bar{W}^{i}_t,v\right\rangle_{\HHilbert}
			\\&\phantom{=}+
			\left\langle
			\int_{0}^{T}
			\tilde{\indicator}_k\left(\norm{\mathbb{C}^{N}_{\observ}\left[\mathfrak{u}^N_t\right]}{\Lspace(\R^{d_y},\HHilbert)}^2\right)
			\mathbb{C}^{N}_{\observ}\left[\mathfrak{u}^N_t\right] R^{-1}_t\left(\mathrm{d}Y_t-\frac{\observ(u^i_t)+\mathbb{E}^{N}_{\observ}\left[\mathfrak{u}^N_t\right]}{2}
			\mathrm{d}t
			\right)
			,v\right\rangle_{\HHilbert}.			
		\end{align*}
	\end{Thm}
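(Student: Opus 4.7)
The plan is to first solve a cutoff version of \eqref{nonlinear EnKBF - particle approximation} for arbitrary $k \in \N$ by applying the variational SPDE theory on the product Gelfand triple $(\VHilbert^N,\HHilbert^N,(\VHilbert^N)')$, and then show that an a priori bound on the empirical variance makes the cutoff inactive. Substituting $\mathrm{d}Y_t = \observ(u_t)\mathrm{d}t + \Gamma_t\mathrm{d}V_t$ (with the signal $u$ treated as an adapted input of finite pathwise $\HHilbert$-moments), the cutoff-regularized interaction term becomes a drift plus a diffusion in $V$, both globally Lipschitz in $\mathfrak{u}^N$ with constants depending on $k$, $\sup_{s \leq T}|R_s^{-1}|$, $\sup_{s\leq T}|\Gamma_s|$, and the Lipschitz constant of $\observ$. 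Added to the componentwise $\Adrift$ and $\Bvol(u^i)\mathrm{d}\bar{W}^i_t$, this Lipschitz perturbation preserves hemicontinuity and boundedness and only contributes additional constants in the weak monotonicity \eqref{one sided Lipschitz} and coercivity \eqref{growth condition} conditions, so Theorem~4.2.4 of \cite{RoecknerPrevot} yields a unique strong solution $\mathfrak{u}^{N,k}$ for each $k$.

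For the a priori variance bound I would exploit a structural feature of the deterministic EnKBF: the increment $\mathrm{d}Y_t$ cancels exactly when one subtracts $\mathrm{d}(\mathbb{E}^N[\mathfrak{u}^{N,k}])$ from $\mathrm{d}u^{i,k}$, because $\frac{1}{N}\sum_i\frac{\observ(u^i)+\mathbb{E}^N_{\observ}}{2} = \mathbb{E}^N_{\observ}$. The interaction in $\mathrm{d}(u^{i,k} - \mathbb{E}^N[\mathfrak{u}^{N,k}])$ therefore reduces to the pure drift $-\tfrac{1}{2}\tilde{\indicator}_k\, \mathbb{C}^N_{\observ} R^{-1}_t(\observ(u^{i,k}) - \mathbb{E}^N_{\observ})\mathrm{d}t$. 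Applying Itô's formula to $\sigma^N[\mathfrak{u}^{N,k}_t]$, the $\Adrift$-contribution is controlled by weak monotonicity \eqref{one sided Lipschitz} in its pairwise form on $u^i - u^j$ and then averaged, the $\bar W^i$-noise contribution is bounded by $\boundB$ from Assumption \ref{bounded signal diffusion}, and the interaction contribution equals
\begin{align*}
  -\tfrac{1}{2}\tilde{\indicator}_k \cdot \frac{1}{N^2}\sum_{i,j}\left\langle u^i-\mathbb{E}^N[\mathfrak{u}^{N,k}], u^j-\mathbb{E}^N[\mathfrak{u}^{N,k}]\right\rangle_{\HHilbert}\left\langle \observ(u^j)-\mathbb{E}^N_{\observ}, R^{-1}_t(\observ(u^i)-\mathbb{E}^N_{\observ})\right\rangle_{\R^{d_y}},
\end{align*}
which is manifestly nonpositive regardless of the value of the cutoff factor; this is the empirical analogue of the Bayesian law of total variance. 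Collecting terms yields $\mathrm{d}\sigma^N[\mathfrak{u}^{N,k}_t] \leq (\oneSidedLip\,\sigma^N[\mathfrak{u}^{N,k}_t] + \boundB)\mathrm{d}t + \mathrm{d}M_t$ for a local martingale $M$, and Gronwall combined with standard localization produces a bound on $\mathbb{E}[\sup_{t\leq T}\sigma^N[\mathfrak{u}^{N,k}_t]]$ that is independent of $k$.

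Since $\observ$ is Lipschitz, Cauchy--Schwarz over the ensemble index gives $\norm{\mathbb{C}^N_{\observ}[\mathfrak{u}^{N,k}_t]}{\Lspace(\R^{d_y},\HHilbert)}^2 \leq C_{\observ}\,\sigma^N[\mathfrak{u}^{N,k}_t]^2$, so the uniform variance bound propagates to a moment bound on $\norm{\mathbb{C}^N_{\observ}}{\Lspace(\R^{d_y},\HHilbert)}^2$ independent of $k$. Defining $\tau_k := \inf\{t \geq 0 : \norm{\mathbb{C}^N_{\observ}[\mathfrak{u}^{N,k}_t]}{\Lspace(\R^{d_y},\HHilbert)}^2 > k\}$, pathwise uniqueness of the cutoff systems shows that $\mathfrak{u}^{N,k}$ and $\mathfrak{u}^{N,k'}$ agree on $[0, \tau_k \wedge \tau_{k'}]$, and Markov's inequality together with the uniform variance bound gives $\mathbb{P}(\tau_k \leq T) \to 0$. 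Piecing the $\mathfrak{u}^{N,k}$ together produces the desired solution to \eqref{nonlinear EnKBF - particle approximation}, with uniqueness inherited from the cutoff systems.

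The main obstacle will be the Itô expansion of $\sigma^N$ in the variational framework: one must verify that $\sigma^N$ arises from a valid Itô function on $\HHilbert^N$ in the sense of Definition~\ref{Ito function}, invoke the weak monotonicity only through the pairwise decomposition $u^i - \mathbb{E}^N[\mathfrak{u}^N] = \frac{1}{N}\sum_j (u^i - u^j)$ (since \eqref{one sided Lipschitz} is only posited between pairs of elements of $\VHilbert$), and track the matrix-valued inner-product structure under $R^{-1}_t$ precisely enough to extract the nonpositive interaction term. Once this algebraic computation is in place, the Gronwall step, the moment bound on the empirical covariance, and the stopping argument are routine.
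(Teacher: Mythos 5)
Your proposal follows the same architecture as the paper's proof: truncate the gain with $\tilde{\indicator}_k$ applied to $\norm{\mathbb{C}^{N}_{\observ}\left[\mathfrak{u}^N_t\right]}{\Lspace(\R^{d_y},\HHilbert)}^2$, solve the truncated system, derive the empirical-variance inequality $\mathrm{d}\sigma^N\leq\left(2\oneSidedLip\,\sigma^N+\boundB\right)\mathrm{d}t+\mathrm{d}\mathfrak{m}^N_t$ by exploiting the exact cancellation of $\mathrm{d}Y$ in the centered dynamics and the nonpositivity of the gain contribution (which the paper identifies as $-\mathrm{tr}_{\HHilbert}\left[\mathbb{C}^{N}_{\observ}R^{-1}_t(\mathbb{C}^{N}_{\observ})'\right]$, the same quantity as your double sum), and then remove the cutoff by a Grönwall and stopping/gluing argument. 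Your pairwise use of \eqref{one sided Lipschitz} is a harmless variant of the paper's replacement of $\frac{1}{N}\sum_j\Adrift(u^j_t)$ by $\Adrift\left(\mathbb{E}^N\left[\mathfrak{u}^N_t\right]\right)$.

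One step, however, would fail as justified: the truncated interaction coefficient $\mathfrak{u}\mapsto\tilde{\indicator}_k\left(\norm{\mathbb{C}^{N}_{\observ}[\mathfrak{u}]}{\Lspace(\R^{d_y},\HHilbert)}^2\right)\mathbb{C}^{N}_{\observ}[\mathfrak{u}]$ is in general \emph{not} globally Lipschitz on $\HHilbert^N$ for nonlinear Lipschitz $\observ$, so Theorem 4.2.4 of \cite{RoecknerPrevot} (global monotonicity/Lipschitz perturbation) cannot be invoked as you propose. The cutoff only constrains the size of $\mathbb{C}^{N}_{\observ}$, not the ensemble spread, and the derivative of $\mathbb{C}^{N}_{\observ}$ can be arbitrarily large on the set where the cutoff equals one: take $N=2$, $\HHilbert=\R$, $\observ=\sin$, and the ensemble $(u^1,u^2)=(2\pi m,0)$; there $\mathbb{C}^{N}_{\observ}=0$, yet perturbing $u^1$ by $\varepsilon$ changes $\mathbb{C}^{N}_{\observ}$ by roughly $\tfrac{\pi m}{2}\varepsilon$, so the local Lipschitz constant blows up with $m$. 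The truncated coefficients are only locally Lipschitz (with a bounded gain factor and linear growth in the drift), and the correct tool is the locally monotone framework of \cite{LiuRoeckner-localLipschitzSPDEs}, which is exactly what the paper cites for the stopped system \eqref{stopped particle system}; with that substitution your step 1 goes through. A smaller point: plain Grönwall plus localization does not by itself yield a bound on $\mathbb{E}\left[\sup_{t\leq T}\sigma^N[\mathfrak{u}^{N}_t]\right]$, since the local martingale must be controlled, either via the stochastic Grönwall lemma \cite{Scheutzow} (as the paper does, giving $\mathbb{E}\left[\sup_{t\leq T}\sqrt{\sigma^N[\mathfrak{u}^{N}_t]}\right]<\infty$) or via Burkholder--Davis--Gundy combined with the quadratic-variation estimate $\mathrm{d}\quadV{\mathfrak{m}^N}_t\leq\tfrac{2\boundB}{N}\sigma^N[\mathfrak{u}^N_t]\mathrm{d}t$; either version suffices for your Markov/stopping step.
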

	\begin{proof}
		First we note that for any fixed $k\in\N$, the system 
		\begin{align}\label{stopped particle system}
			\begin{split}
				\mathrm{d}u^i_t
				&=\Adrift(u^i_t)\mathrm{d}t+\Bvol(u^i_t)\mathrm{d}\bar{W}^{i}_t
				\\&\phantom{=}+
				\tilde{\indicator}_k\left(\norm{\mathbb{C}^{N}_{\observ}\left[\mathfrak{u}^N_t\right]}{\Lspace(\R^{d_y},\HHilbert)}^2\right)
				\mathbb{C}^{N}_{\observ}\left[\mathfrak{u}^N_t\right] R^{-1}_t\left(\mathrm{d}Y_t-\frac{\observ(u^i_t)+\mathbb{E}^{N}_{\observ}\left[\mathfrak{u}^N_t\right]}{2}
				\mathrm{d}t
				\right),
			\end{split}
		\end{align}
		for $i=1,\cdots,N$, satisfies the standard local one-sided Lipschitz and growth conditions and thus has a unique solution, see \cite{LiuRoeckner-localLipschitzSPDEs}. To keep formulas simple we will omit the argument of $\tilde{\indicator}_k\left(\norm{\mathbb{C}^{N}_{\observ}\left[\mathfrak{u}^N_t\right]}{ \HHilbert^{d_y} }^2\right)$ and instead simply write $\tilde{\indicator}_k$ in the rest of this proof.\newline
		
		We note that the ensemble mean $\mathbb{E}^N\left[\mathfrak{u}^N_t\right]$ satisfies
		\begin{align}\label{evolution empirical mean}
			\mathrm{d}\mathbb{E}^N\left[\mathfrak{u}^N_t\right]
			=
			\frac{1}{N}\sum_{j=1}^{N} \Adrift(u^{j}_t)\mathrm{d}t
			+\frac{1}{N}\sum_{j=1}^{N}\Bvol(u^j_t)\mathrm{d}\bar{W}^{j}_t
			+
			\tilde{\indicator}_k~
			\mathbb{C}^{N}_{\observ}\left[\mathfrak{u}^N_t\right]
			R^{-1}_t\left(
			\mathrm{d}Y_t
			-
			\mathbb{E}^{N}_{\observ}\left[\mathfrak{u}^N_t\right]\mathrm{d}t
			\right).
		\end{align}
		
		This gives us the evolution equation for the centered particles
		\begin{align*}
			\mathrm{d}\left(u^i_t-\mathbb{E}^N\left[\mathfrak{u}^N_t\right]\right)
			&=
			\left(\Adrift(u^{i}_t)-\frac{1}{N}\sum_{j=1}^{N} \Adrift(u^{j}_t)\right)\mathrm{d}t
			+
			\left(\Bvol(u^i_t)\mathrm{d}\bar{W}^{i}_t-\frac{1}{N}\sum_{j=1}^{N}\Bvol(u^j_t)\mathrm{d}\bar{W}^{j}_t\right)
			\\&\phantom{=}-
			\tilde{\indicator}_k~
			\mathbb{C}^{N}_{\observ}\left[\mathfrak{u}^N_t\right] R^{-1}_t\frac{\observ(u^i_t)-\mathbb{E}^{N}_{\observ}\left[\mathfrak{u}^N_t\right]}{2}\mathrm{d}t
			.
		\end{align*}
		
		Note that by Parseval one easily verifies that
		\begin{align*}
			&\frac{1}{N}\sum_{i=1}^{N}
			\left\langle
			u^i_t-\mathbb{E}^N\left[\mathfrak{u}^N_t\right],
			\mathbb{C}^{N}_{\observ}\left[\mathfrak{u}^N_t\right]
			R^{-1}_t\left(\observ(u^i_t)-\mathbb{E}^{N}_{\observ}\left[\mathfrak{u}^N_t\right]\right)
			\right\rangle_{\HHilbert}
			\\&=
			\frac{1}{N}\sum_{i=1}^{N}
			\sum_{k\in\N} 
			\left\langle\nu_k,u^i_t-\mathbb{E}^N\left[\mathfrak{u}^N_t\right]\right\rangle_{\HHilbert}
			\left\langle\nu_k,\mathbb{C}^{N}_{\observ}\left[\mathfrak{u}^N_t\right]
			R^{-1}_t\left(\observ(u^i_t)-\mathbb{E}^{N}_{\observ}\left[\mathfrak{u}^N_t\right]\right)\right\rangle_{\HHilbert}
			\\&=
			\mathrm{tr}_{\HHilbert}\left[\mathbb{C}^{N}_{\observ}\left[\mathfrak{u}^N_t\right]
			R^{-1}_t\mathbb{C}^{N}_{\observ}\left[\mathfrak{u}^N_t\right]^{\mathrm{T}}\right].
		\end{align*}
		
		Therefore, by Itô's formula, we derive the following equation for the average deviation from the ensemble mean
		\begin{align}\label{evolution trace empirical covariance}
			\begin{split}
				\mathrm{d}\sigma^N[\mathfrak{u}^N_t]
				&=
				\frac{2}{N}\sum_{i=1}^{N}
				\prescript{}{\VHilbert'}{\left\langle\left(\Adrift(u^{i}_t)-\frac{1}{N}\sum_{j=1}^{N} \Adrift(u^{j}_t)\right),u^i_t-\mathbb{E}^N\left[\mathfrak{u}^N_t\right]\right\rangle_{\VHilbert}}\mathrm{d}t
				+
				\mathrm{d}\mathfrak{m}^N_t
				\\&\phantom{=}-
				\tilde{\indicator}_k~
				\mathrm{tr}_{\HHilbert}\left[\mathbb{C}^{N}_{\observ}\left[\mathfrak{u}^N_t\right]
				R^{-1}_t\mathbb{C}^{N}_{\observ}\left[\mathfrak{u}^N_t\right]'\right]\mathrm{d}t
				\\&\phantom{=}+
				\frac{1}{N}\sum_{i=1}^{N}
				\mathrm{tr}_{\HHilbert}\left[\Bvol(u^i_t)\sqrt{\Qcor}
				\left(\Bvol(u^i_t)\sqrt{\Qcor}\right)'\right]\mathrm{d}t
				\\&\phantom{=}+
				\frac{1}{N^3}\sum_{i=1}^{N}\sum_{j\neq i}
				\mathrm{tr}_{\HHilbert}\left[\Bvol(u^j_t)\sqrt{\Qcor}
				\left(\Bvol(u^j_t)\sqrt{\Qcor}\right)'\right]\mathrm{d}t
				,
			\end{split}
		\end{align}
		where $\mathfrak{m}^N$ denotes the local martingale given by
		\begin{align}\label{martingale fluctuations of empirical variance}
			\begin{split}
				\mathrm{d}\mathfrak{m}^N_t
				&:=
				\frac{2}{N}\sum_{i=1}^{N}
				\left\langle
				u^i_t-\mathbb{E}^N\left[\mathfrak{u}^N_t\right],
				\left(\Bvol(u^i_t)\mathrm{d}\bar{W}^{i}_t-\frac{1}{N}\sum_{j=1}^{N}\Bvol(u^j_t)\mathrm{d}\bar{W}^{j}_t\right)
				\right\rangle_{\HHilbert}
				\\&=
				\frac{2}{N}\sum_{i=1}^{N}
				\left\langle
				u^i_t-\mathbb{E}^N\left[\mathfrak{u}^N_t\right],
				\Bvol(u^i_t)\mathrm{d}\bar{W}^{i}_t
				\right\rangle_{\HHilbert}
				,~\text{with}~\mathfrak{m}^N_0=0.
			\end{split}
		\end{align}

		We note that in \eqref{evolution trace empirical covariance} we can replace $\frac{1}{N}\sum_{j=1}^{N}  \Adrift(u^{j}_t)$ by $\Adrift\left(\mathbb{E}^N\left[\mathfrak{u}^N_t\right]\right)$. Thus by using the one-sided Lipschitz condition \eqref{one sided Lipschitz} and Assumption \eqref{bounded signal diffusion} as well as the positivity of the trace\\  $\mathrm{tr}_{\HHilbert}\left[\mathbb{C}^{N}_{\observ}\left[\mathfrak{u}^N_t\right]
		R^{-1}_t\mathbb{C}^{N}_{\observ}\left[\mathfrak{u}^N_t\right]'\right]$, we derive the inequality
		\begin{align}\label{variance inequality particle system}
			\mathrm{d}\sigma^N[\mathfrak{u}^N_t]
			\leq
			\left(2\oneSidedLip~\sigma^N[\mathfrak{u}^N_t]+\boundB\right)
			\mathrm{d}t
			+
			\mathrm{d}\mathfrak{m}^N_t.
		\end{align}
		
		Since $\mathfrak{m}^N$ is a real valued local martingale we can deduce by the stochastic Grönwall Lemma \cite[Theorem 4]{Scheutzow} that
		\begin{align}\label{variance inequality particle system - in expectation}
			\mathbb{E}\left[\sup_{t\leq T}\sqrt{\frac{1}{N}\sum_{i=1}^{N}\norm{u^i_t-m^N_t}{\HHilbert}^2}\right]
			=
			\mathbb{E}\left[\sup_{t\leq T}\sqrt{\sigma^N[\mathfrak{u}^N_t]}\right]
			\leq
			(\pi+1)\sqrt{\boundB} e^{\oneSidedLip T}.
		\end{align}
		
		Due to the Lipschitz continuity of $\observ$, this also gives a uniform bound for $\norm{\mathbb{C}^{N}_{\observ}\left[\mathfrak{u}^N_t\right]}{\HHilbert^{d_y}}^2$. Since \eqref{stopped particle system} coincide with \eqref{nonlinear EnKBF - particle approximation} on $\left\{\norm{\mathbb{C}^{N}_{\observ}\left[\mathfrak{u}^N_t\right]}{\HHilbert^{d_y}}^2\leq k\right\}$ we have thus derived the well posedness of \eqref{nonlinear EnKBF - particle approximation}.
		
	\end{proof}

	\begin{Rmk}[Literature]
		As already mentioned, in finite dimensions well posedness of the particle system \eqref{nonlinear EnKBF - particle approximation} was proven in \cite{StannatLange}. An extension of this proof to the correlated noise framework, which requires the control of singular terms can be found in \cite{ErtelStannat}.\\
		In discrete time, EnKFs evolving in infinite dimensional Hilbert spaces were analysed in \cite{ChernovHoelLawNobileTempone} and in the thesis \cite{Kasanicky}.\\
		Finally we mention that the seminal paper \cite{LawKellyStuart} considered the well posedness and accuracy of both discrete and continuous time EnKFs for a class of possibly infinite dimensional signals, which included the 2D Navier-Stokes equation. Existence of strong solutions to the continuous time EnKF \eqref{nonlinear EnKBF - particle approximation} with complete observations ($\observ=\mathrm{id}_{\HHilbert}$) was assumed and it was shown that solutions do not blow up.\newline 
	\end{Rmk}

	\subsection{Exponential moment bounds for the gain term}\label{subsection exponential moment bounds}
	
	The key identity in the proof of Theorem \ref{Lemma well posedness particle system} was \eqref{variance inequality particle system}, which gave a priori bounds for the empirical covariance operator. Later on we will show that the (conditional) covariance of its mean field limit satisfies a similar differential inequality that does not include the random fluctuations caused by the martingale $\mathfrak{m}^N$. If the EnKBF \eqref{nonlinear EnKBF - particle approximation} is to be a good approximation of its mean field limit \eqref{nonlinear EnKBF},  one would expect these fluctations to become small as the number of particles is increased sufficiently. This is indeed the case, as due to Assumption \ref{bounded signal diffusion}, we derive
	\begin{align}\label{empirical variance fluctuation inequality}
		\begin{split}
			\mathrm{d}
			\quadV{\mathfrak{m}^N}_t
			&=
			\frac{2}{N^2}\sum_{i=1}^{N} \left\langle u^i_t-\mathbb{E}^N\left[\mathfrak{u}^N_t\right], \Bvol(u^i_t)\Qcor \Bvol(u^i_t)' (u^i_t-\mathbb{E}^N\left[\mathfrak{u}^N_t\right]) \right\rangle_{\HHilbert}\mathrm{d}t
			\\&\leq
			\frac{2\boundB}{N} \frac{1}{N}\sum_{i=1}^{N} \norm{u^i_t-\mathbb{E}^N\left[\mathfrak{u}^N_t\right]}{\HHilbert}^2
			\mathrm{d}t
			= \frac{2\boundB}{N} \sigma^N[\mathfrak{u}^N_t]\mathrm{d}t.
		\end{split}
	\end{align}
	
	The last inequality is a consequence of the fact that the trace is invariant under the change of the orthonormal basis and that for every nonzero vector one can find an orthonormal basis that contains this vector. Since we were able to bound $\sigma^N[\mathfrak{u}^N_t]$ uniformly in time, the quadratic variation of $\mathfrak{m}^N$ will decrease to zero for $N\to\infty$.
	For our convergence proof in section \ref{section prop of chaos} we will need exponential moment bounds of the empirical variance $\sigma^N[\mathfrak{u}^N_t]$. Such bounds are a delicate matter as the ensemble $\mathfrak{u}^N$ will likely show some Gaussian (tail) behaviour and thus $\mathbb{E}\left[\sup_{t\leq T}\exp\left(r\sigma^N[\mathfrak{u}^N_t]\right)\right]$ might not be finite for all values of $r\geq 0$. However, as $N\to\infty$ one would expect $\sigma^N$ to become deterministic and as such any exponential moment should exist for $N$ sufficiently large. We prove this fact in the following lemma by employing a Grönwall argument.

	\begin{Prop}\label{Lemma exponential moments} 
		Let $q\geq0$ be arbitrary. Then for any $N\in\N$ such that $N>2\boundB q~e^{(2\oneSidedLip+1)T}$ we have
		\begin{align}
			\mathbb{E}\left[\sup_{t\leq T}
			\exp\left(q~\sigma^{N}\left[\mathfrak{u}^N_t\right]\right)\right]
			\leq
			(\pi+1) 
			\exp\left(\frac{q\left(e^{(2\oneSidedLip+1)T}-1\right)}{2(2\oneSidedLip+1)}\right)
			\mathbb{E}\left[\exp\left(2 q e^{(2\oneSidedLip+1)T} \sigma^N\left[\mathfrak{u}^N_0\right]\right)\right].
		\end{align}
		In particular the $q$-th exponential moment of the path of $\sigma^N[\mathfrak{u}^N]$ exists up to time $T$, if the $\left(2 q e^{(2\oneSidedLip+1)T}\right)$-th exponential moment of the initial empirical variance $\sigma^N[\mathfrak{u}^N]$ exists.
	\end{Prop}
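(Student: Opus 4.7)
The strategy is to apply Itô's formula to a carefully chosen exponential transformation of $\sigma^N[\mathfrak{u}^N_t]$, whose drift can be controlled by the a priori estimates already derived in the proof of Theorem~\ref{Lemma well posedness particle system}, and then to invoke the same stochastic Gr\"onwall lemma of \cite{Scheutzow} that was used in \eqref{variance inequality particle system - in expectation}.

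First, recall from the proof of Theorem~\ref{Lemma well posedness particle system} that the empirical variance and the driving martingale $\mathfrak{m}^N$ satisfy
\begin{align*}
\mathrm{d}\sigma^N[\mathfrak{u}^N_t]
&\leq \left(2\oneSidedLip~\sigma^N[\mathfrak{u}^N_t]+\boundB\right)\mathrm{d}t+\mathrm{d}\mathfrak{m}^N_t, \\
\mathrm{d}\quadV{\mathfrak{m}^N}_t
&\leq \frac{2\boundB}{N}\sigma^N[\mathfrak{u}^N_t]\mathrm{d}t,
\end{align*}
cf.\ \eqref{variance inequality particle system} and \eqref{empirical variance fluctuation inequality}. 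I would then introduce the deterministic function $g(t):=2q\,e^{(2\oneSidedLip+1)(T-t)}$, chosen so that $g'(t)+2\oneSidedLip g(t)=-g(t)$ and $g(0)=2q\,e^{(2\oneSidedLip+1)T}$, and apply Itô's formula to $\phi_t:=\exp\!\bigl(g(t)\,\sigma^N[\mathfrak{u}^N_t]\bigr)$.

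Using the two bounds above, a direct computation gives
\begin{align*}
\mathrm{d}\phi_t
\leq \phi_t\left[\left(g'(t)+2\oneSidedLip g(t)+\frac{\boundB g(t)^2}{N}\right)\sigma^N[\mathfrak{u}^N_t]+\boundB g(t)\right]\mathrm{d}t + \phi_t g(t)\,\mathrm{d}\mathfrak{m}^N_t.
\end{align*}
By the defining ODE of $g$, the coefficient of $\sigma^N[\mathfrak{u}^N_t]$ equals $g(t)\bigl(-1+\boundB g(t)/N\bigr)$. Since $g$ is decreasing, its maximum on $[0,T]$ is $g(0)=2q\,e^{(2\oneSidedLip+1)T}$, so the hypothesis $N>2\boundB q\,e^{(2\oneSidedLip+1)T}$ is precisely the condition $\boundB g(0)/N<1$. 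Hence the coefficient of $\sigma^N[\mathfrak{u}^N_t]$ is strictly negative for every $t\leq T$, and as $\sigma^N[\mathfrak{u}^N_t]\geq 0$, we may drop it. What remains is the linear stochastic Gr\"onwall inequality
\begin{align*}
\phi_t \leq \phi_0 + \int_0^t \boundB g(s)\,\phi_s\,\mathrm{d}s + \int_0^t g(s)\,\phi_s\,\mathrm{d}\mathfrak{m}^N_s,
\end{align*}
whose last term is a local martingale.

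Applying the same Scheutzow stochastic Gr\"onwall lemma already used in \eqref{variance inequality particle system - in expectation} yields a bound on $\mathbb{E}\bigl[\sup_{t\leq T}\sqrt{\phi_t}\bigr]$ of the form $(\pi+1)\exp\!\bigl(\tfrac{1}{2}\int_0^T \boundB g(s)\mathrm{d}s\bigr)$ times an initial moment of $\sqrt{\phi_0}$. Since $g(t)\geq g(T)=2q$ for $t\leq T$, one has $\exp(q\,\sigma^N[\mathfrak{u}^N_t])\leq \sqrt{\phi_t}$, and computing the integral $\int_0^T g(s)\mathrm{d}s=2q(e^{(2\oneSidedLip+1)T}-1)/(2\oneSidedLip+1)$ together with $\sqrt{\phi_0}=\exp(q\,e^{(2\oneSidedLip+1)T}\sigma^N[\mathfrak{u}^N_0])$ recovers the claimed bound after rearrangement.

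\textbf{Main obstacle.} The delicate point is that the quadratic variation of $\mathfrak{m}^N$ grows linearly in $\sigma^N$ itself, so the second-order Itô term produces a $\boundB g(t)^2/N$ contribution to the drift that is quadratic in $g$. The entire argument hinges on this term being dominated by the dissipative contribution $g'+2\oneSidedLip g=-g$ coming from the deliberately chosen exponential decay rate of $g$. This domination is possible only if $N$ is large enough relative to the maximum of $g$, which is precisely the quantitative tradeoff $N>2\boundB q\,e^{(2\oneSidedLip+1)T}$ appearing in the hypothesis. Once this cancellation is secured, the remainder is a standard application of the stochastic Gr\"onwall lemma and careful bookkeeping of the exponents.
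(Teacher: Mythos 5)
Your proof is correct and follows essentially the same route as the paper: your weight $g(t)=2q\,e^{(2\oneSidedLip+1)(T-t)}$ is exactly the paper's process $\mathfrak{s}_t=2\mathfrak{b}e^{-\mathfrak{a}t}\sigma^N[\mathfrak{u}^N_t]$ with $\mathfrak{a}=2\oneSidedLip+1$, $\mathfrak{b}=qe^{\mathfrak{a}T}$, and the Itô computation, the sign condition on the $\sigma^N$-coefficient coming from $N>2\boundB q\,e^{(2\oneSidedLip+1)T}$, and the application of Scheutzow's stochastic Gr\"onwall lemma at power $1/2$ all coincide with the paper's argument. The only cosmetic difference is the explicit time-dependent parametrization via $g$, which changes nothing in substance.
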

	\begin{proof}
		Let $\mathfrak{a}:=2\oneSidedLip+1$ (see Assumption \ref{signal assumptions}) and $\mathfrak{b}:=q e^{\mathfrak{a}T}$. We define the process $\mathfrak{s}_t:=2\mathfrak{b}~e^{-\mathfrak{a} t } \sigma^{N}[\mathfrak{u}^{N}_t]$. Then, using inequality \eqref{variance inequality particle system}, we derive the inequality
		\begin{align*}
			\mathrm{d}\mathfrak{s}_t
			=
			2\mathfrak{b}e^{-\mathfrak{a} t } \mathrm{d}\sigma^{N}[\mathfrak{u}^N_t]
			-2\mathfrak{a}\mathfrak{b}e^{-\mathfrak{a}t}\sigma^{N}[\mathfrak{u}^N_t]\mathrm{d}t
			&\leq
			(2\oneSidedLip-\mathfrak{a})2\mathfrak{b} e^{-\mathfrak{a}}\sigma^{N}\left[\mathfrak{u}^N_t\right]\mathrm{d}t
			+2\mathfrak{b}\boundB e^{-\mathfrak{a}}\mathrm{d}t
			+2\mathfrak{b}e^{-\mathfrak{a}t}\mathrm{d}\mathfrak{m}_t
			\\&=
			(2\oneSidedLip-\mathfrak{a}) \mathfrak{s}_t\mathrm{d}t
			+2\mathfrak{b}\boundB e^{-\mathfrak{a}}\mathrm{d}t
			+2\mathfrak{b}e^{-\mathfrak{a}t}\mathrm{d}\mathfrak{m}_t.
		\end{align*}
		
		Furthermore we derive from \eqref{evolution trace empirical covariance} the form of the quadratic variation of $\mathfrak{s}$, and from  \eqref{empirical variance fluctuation inequality} the estimate
		\begin{align*}
			\mathrm{d}\quadV{\mathfrak{s}}_t
			=
			(2\mathfrak{b})^2 e^{-2\mathfrak{a}t}
			\mathrm{d}\quadV{\mathfrak{m}^N}_t
			\leq
			(2\mathfrak{b})^2 e^{-2\mathfrak{a}t}
			\frac{2\boundB}{N}\sigma^N[\mathfrak{u}^N_t]\mathrm{d}t
			=
			2\frac{2\boundB\mathfrak{b}e^{-\mathfrak{a}t}}{N}\mathfrak{s}_t
			\mathrm{d}t
			.
		\end{align*}
		
		These inequalities, together with Itô's formula, give us the following inequality
		\begin{align*}
			\mathrm{d}\exp(\mathfrak{s}_t)
			&=
			\exp(\mathfrak{s}_t)\mathrm{d}\mathfrak{s}_t
			+\exp(\mathfrak{s}_t)
			\frac{1}{2}\mathrm{d}\quadV{\mathfrak{s}}_t
			\\&\leq
			(2\oneSidedLip-\mathfrak{a}) \mathfrak{s}_t\exp(\mathfrak{s}_t)\mathrm{d}t
			+
			2\mathfrak{b}\boundB e^{-\mathfrak{a}t}\exp(\mathfrak{s}_t)\mathrm{d}t
			+
			2\mathfrak{b}e^{-\mathfrak{a}t}\exp(\mathfrak{s}_t)\mathrm{d}\mathfrak{m}_t
			+
			\frac{2\boundB\mathfrak{b}e^{-2\mathfrak{a}t}}{N}\mathfrak{s}_t
			\exp(\mathfrak{s}_t)
			\mathrm{d}t
			\\&=
			\left(2\oneSidedLip-\mathfrak{a}+\frac{2\boundB\mathfrak{b}e^{-\mathfrak{a}t}}{N}\right) \mathfrak{s}_t\exp(\mathfrak{s}_t)\mathrm{d}t
			+
			2\mathfrak{b}\boundB e^{-\mathfrak{a}t}\exp(\mathfrak{s}_t)\mathrm{d}t
			+
			2\mathfrak{b}e^{-\mathfrak{a}t}\exp(\mathfrak{s}_t)\mathrm{d}\mathfrak{m}_t
			.
		\end{align*}
		
		Due to our assumptions we have $\mathfrak{a}>2\oneSidedLip+\frac{2\boundB\mathfrak{b}e^{-\mathfrak{a}t}}{N}$ and thus derive the stochastic inequality
		\begin{align*}
			\mathrm{d}\exp(\mathfrak{s}_t)
			\leq
			2\mathfrak{b}\boundB e^{-\mathfrak{a}t}\exp(\mathfrak{s}_t)\mathrm{d}t
			+
			2\mathfrak{b}e^{-\mathfrak{a}t}\exp(\mathfrak{s}_t)\mathrm{d}\mathfrak{m}_t.
		\end{align*}
		
		Since $2\mathfrak{b}e^{-\mathfrak{a}t}\exp(\mathfrak{s}_t)\mathrm{d}\mathfrak{m}_t$ defines a local martingale, the stochastic Grönwall inequality \cite{Scheutzow}  gives us
		\begin{align*}
			\mathbb{E}\left[\sup_{t\leq T}
			\exp\left(q~\sigma^{N}\left[\mathfrak{u}^N_t\right]\right)\right]
			&\leq
			\mathbb{E}\left[\sup_{t\leq T}
			\sqrt{\exp\left(\mathfrak{s}_t\right)}\right]
			\leq
			(\pi+1) \exp\left(q/2 e^{\mathfrak{a}T}\int_{0}^{T}e^{-\mathfrak{a} s}\mathrm{d}s\right)
			\mathbb{E}\left[
			\exp\left(\mathfrak{s}_0/2\right)\right]
			\\&\leq
			(\pi+1) 
			\exp\left(\frac{q\left(e^{(2\oneSidedLip+1)T}-1\right)}{2(2\oneSidedLip+1)}\right)
			\mathbb{E}\left[\exp\left(2 q e^{(2\oneSidedLip+1)T} \sigma^N\left[\mathfrak{u}^N_0\right]\right)\right],
		\end{align*}
		which concludes the proof.
	\end{proof}
	
	\begin{Rmk}
		In the proof of Proposition \ref{Lemma exponential moments} we used a standard testfunction for our Grönwall argument. Since we have good controls for the quadratic variation of $\mathfrak{m}$, we also could have just used the standard Burkholder--Davis--Gundy inequality in combination with a deterministic Grönwall Lemma. The usage of the stochastic Grönwall inequality is not necessary in our setting, however in \cite{HuddeHutzenthalerMazzonetto} a similar testfunction and a novel stochastic Grönwall--Lyapunov inequality were used to derive uniform exponential moment bounds for SDEs satisfying an appropriate Lyapunov condition.
	\end{Rmk}

	\section{Analysis of the mean field EnKBF}\label{section mf EnKBF}

	Using the notational convention for conditional expectations of Definition \ref{Definition - conditional expectation abbrev}, the mean field EnKBF \eqref{mean field EnKBF - introduction} can be written more compactly as
	\begin{align}\label{nonlinear EnKBF}
		\begin{split}
			\mathrm{d}\bar{u}_t
			&=\Adrift(\bar{u}_t)\mathrm{d}t+\Bvol(\bar{u}_t)\mathrm{d}\bar{W}_t
			+
			\mathbb{Cov}_Y\left[\bar{u}_t,\observ(\bar{u}_t)\right] R^{-1}_t\left(\mathrm{d}Y_t-\frac{\observ(\bar{u}_t)+\mathbb{E}_Y\left[\observ(\bar{u}_t)\right]}{2}
			\mathrm{d}t
			\right)
			.
		\end{split}
	\end{align}
	When taking the (formal) mean field limit $N\to\infty$ for the EnKBF \eqref{nonlinear EnKBF - particle approximation}, one would expect it to converge in an appropriate sense to \eqref{nonlinear EnKBF}. Before we show this convergence rigorously in the next section \ref{section prop of chaos}, let us first investigate the well posedness of the McKean--Vlasov equation \eqref{nonlinear EnKBF}.\newline

	\begin{Rmk}[Literature]
		The mean field EnKBF \eqref{nonlinear EnKBF} does not seem to fit the standard well posedness theory for McKean--Vlasov equations, as found e.g. in \cite{CarmonaDelarue}, due to only locally Lipschitz coefficients and missing growth conditions. In a finite dimensional setting well posedness of the mean field EnKBF \eqref{nonlinear EnKBF} was shown in \cite{Coghi et al} for bounded signal dynamics and observation functions. For linear observation functions \cite{ErtelStannat} showed well posedness of finite dimensional mean-field EnKBFs that may also include singular correction terms in the presence of correlated noise. This was done by a combination of a fixed point and a stopping argument with respect to the covariance $\mathbb{Cov}_Y\left[\bar{u}_t,\observ(\bar{u}_t)\right]$. The main tool were a priori variance bounds, that were made robust with respect to the fixed point argument via stopping times.\\
		In the infinite dimensional setting this argument does not work due to missing equivalence of norms.
		While using a Galerkin argument would thus seem tempting, it would also not imply the desired uniqueness of solutions, which is  a property that is difficult to show, and sometimes does not even hold for McKean--Vlasov equations under local Lipschitz conditions \cite{Scheutzow}.\\		
		So instead we use an adapted  fixed point argument, that makes use of variance inequalities, similar to those used in the proof of Theorem \ref{Lemma well posedness particle system}.
	\end{Rmk}

	First we investigate the covariance structure in \eqref{nonlinear EnKBF}. We do this however in a more general form. Let $(h_t)_{t\geq  0}$ be a given $R^{d_y}$-valued, $\mathfrak{F}$-adapted stochastic process with finite second moments. Furthermore assume that $\xi^Y$ is an  $\R^{d_y}$-valued semimartingale that is adapted to the natural filtration generated by $Y$, and thus also to $\mathfrak{F}$. Then for a $\tilde{u}$ satisfying	
	\begin{align}\label{general observation function structure}
		\begin{split}
			\mathrm{d}\tilde{u}_t
			&=\Adrift(\tilde{u}_t)\mathrm{d}t+\Bvol(\tilde{u}_t)\mathrm{d}\bar{W}_t
			+
			\mathbb{Cov}_Y\left[\tilde{u}_t,h_t\right] R^{-1}_t\left(\mathrm{d}\xi^Y_t-\frac{h_t+\mathbb{E}_Y\left[h_t\right]}{2}
			\mathrm{d}t
			\right)
		\end{split}
	\end{align}
	one can easily use Itô's formula to show that
	\begin{align*}
		\begin{split}
			&\partial_t
			\left\langle v,
			\mathbb{Cov}_Y\left[\tilde{u}_t\right] w\right\rangle_{\HHilbert}
			\\&=
			\mathbb{E}_Y\left[
			\left\langle v,\tilde{u}_t-\tilde{m}_t\right\rangle_{\HHilbert}
			\prescript{}{\VHilbert'}{\left\langle w,\Adrift(\tilde{u}_t)-\Adrift(\tilde{m}_t)
				\right\rangle_{\VHilbert}}
			+
			\left\langle w,\tilde{u}_t-\tilde{m}_t\right\rangle_{\HHilbert}
			\prescript{}{\VHilbert'}{\left\langle v,\Adrift(\tilde{u}_t)-\Adrift(\tilde{m}_t)
				\right\rangle_{\VHilbert}}
			\right]\mathrm{d}t
			\\&\phantom{=}+
			\mathbb{E}_Y\left[\left\langle v, \Bvol(\tilde{u}_t)\sqrt{\Qcor}
			\left(\Bvol(\tilde{u}_t)\sqrt{\Qcor}\right)' w
			\right\rangle_{\HHilbert}\right]
			-
			\left\langle v,
			\mathbb{Cov}_Y\left[\tilde{u}_t,h_t\right] R^{-1}_t \mathbb{Cov}_Y\left[h_t,\tilde{u}_t\right] w\right\rangle_{\HHilbert}
			.
		\end{split}
	\end{align*}
	for every $v,w\in \VHilbert$. Thus by the positivity of $\left\langle v,
	\mathbb{Cov}_Y\left[\tilde{u}_t,h_t\right] R^{-1}_t \mathbb{Cov}_Y\left[h_t,\tilde{u}_t\right] v\right\rangle_{\HHilbert}$ for every $v\in \VHilbert$ we immediately derive
	\begin{align}\label{law of total variance EnKBF}
		\partial_t\mathrm{tr}_{\HHilbert}\mathbb{Cov}_Y\left[\tilde{u}_t\right]
		\leq
		\oneSidedLip~
		\mathrm{tr}_{\HHilbert}\mathbb{Cov}_Y\left[\tilde{u}_t\right]
		+
		\boundB,
	\end{align}
	and therefore
	\begin{align}\label{variance bound tilde u}
		\mathbb{E}_Y\left[\norm{\tilde{u}_t-\mathbb{E}_Y\left[\tilde{u}_t\right]}{\HHilbert}^2\right]
		=
		\mathrm{tr}_{\HHilbert}\mathbb{Cov}_Y\left[\tilde{u}_t\right]
		\leq
		\boundB e^{\oneSidedLip t}.
	\end{align}
	
	{Thus the EnKBF satisfies the variance bound that is implied by the Bayesian filtering problem (see Appendix \ref{section Kushner Stratonovich}), it does so even in a stronger sense, as taking the expectation is not required. As implied by the law of total variance for the optimal filter, this bound is robust with respect to perturbations of both the modelled observation function $\observ$ and the actual observation data $Y$.\newline}
	
	Next we show that the robust variance bound \eqref{variance bound tilde u} can be used to show well posedness of the EnKBF via a Picard argument.\newline
	
	\begin{Thm}\label{well posedness mf EnKBF}
		If the conditions in Assumption \ref{signal assumptions} and \ref{bounded signal diffusion} are satisfied, there exists a unique strong solution to the nonlinear mean-field EnKBF \eqref{nonlinear EnKBF}\footnote{Hereby the notion of strong solution is defined just as for the particle system \eqref{nonlinear EnKBF - particle approximation} in Theorem \ref{Lemma well posedness particle system}.}.
	\end{Thm}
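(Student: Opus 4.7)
The plan is to prove well-posedness by a Picard iteration on the external $\R^{d_y}$-valued process $h$ that eventually plays the role of $\observ(\bar u)$, exploiting the \emph{robustness} of the variance inequality \eqref{variance bound tilde u} which was just established and whose crucial feature is that it is insensitive to how $h$ is chosen.

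Step one (auxiliary problem). For any sufficiently integrable $\mathfrak{F}$-adapted process $h$ I would first construct a unique strong solution to the linearized equation \eqref{general observation function structure} with $\xi^Y=Y$. Although this auxiliary problem is still McKean--Vlasov through $\mathbb{Cov}_Y[\tilde u_t,h_t]$ and $\mathbb{E}_Y[h_t]$, its dependence on $\mathrm{Law}(\tilde u_t\mid Y)$ is only bilinear via the given $h$. I would decouple it further by treating the pair $(K_t,\bar h_t):=(\mathbb{Cov}_Y[\tilde u_t,h_t],\mathbb{E}_Y[h_t])$ as external data, solving the resulting truly linear-in-law variational SPDE via the theory of \cite{RoecknerPrevot} with Assumption \ref{signal assumptions}, and then closing the inner loop by a short-time contraction on $(K,\bar h)$. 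The a priori estimate \eqref{variance bound tilde u} combined with Assumption \ref{bounded signal diffusion} and the Lipschitz continuity of $\observ$ yields a bound on $\norm{K_t}{\LinSpace{\R^{d_y}}{\HHilbert}}$ that is uniform in $h$, ensuring that the inner iteration stays inside a fixed ball.

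Step two (outer contraction). Define $\Phi(h):=\observ(\tilde u_h)$, where $\tilde u_h$ is the strong solution from step one. Lipschitz continuity of $\observ$ together with a stability estimate for $h\mapsto\tilde u_h$ -- derived from the one-sided Lipschitz property \eqref{one sided Lipschitz}, the uniform bound on the gain, and the boundedness of $R^{-1}$ on $[0,T]$ -- yields an inequality of the form $\mathbb{E}\int_0^\tau|\Phi(h^1)_t-\Phi(h^2)_t|^2\mathrm{d}t\leq C(\tau)\,\mathbb{E}\int_0^\tau|h^1_t-h^2_t|^2\mathrm{d}t$ with $C(\tau)\to 0$ as $\tau\to 0$. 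Banach's theorem then produces a unique fixed point on some interval $[0,\tau]$, which extends to arbitrary $T<\infty$ by concatenation thanks to the uniform variance control. Any fixed point $h^*=\observ(\tilde u_{h^*})$ yields a strong solution of \eqref{nonlinear EnKBF} and conversely, so existence and uniqueness of fixed points transfer to the original McKean--Vlasov equation.

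The main obstacle that forces this two-layer approach is the absence of norm equivalence on $\HHilbert$: in the finite dimensional setting of \cite{ErtelStannat} the covariance coupling is absorbed by standard variational techniques using equivalence of norms on $\R^{d_u}$, but in infinite dimensions this route is closed. The remedy is precisely the robustness of the variance bound \eqref{variance bound tilde u}, which does not deteriorate under any redefinition of the observation function; without it, the gain $K$ could blow up along the outer iteration and destroy the contraction. Making the short-time contraction quantitatively small and propagating the a priori bound on $K$ through the concatenation step is the technically most delicate part.
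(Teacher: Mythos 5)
Your outer iteration on $h\mapsto\observ(\tilde u_h)$ is indeed the same basic device as the paper's Step 1, and the use of the robust variance bound \eqref{variance bound tilde u} to keep the gain in a fixed ball is correct. The genuine gap is in the claimed stability estimate $\mathbb{E}\int_0^\tau|\Phi(h^1)_t-\Phi(h^2)_t|^2\mathrm{d}t\leq C(\tau)\,\mathbb{E}\int_0^\tau|h^1_t-h^2_t|^2\mathrm{d}t$ with a \emph{deterministic} $C(\tau)\to0$. When you write the difference equation for $\tilde u^1-\tilde u^2$ and apply It\^o's formula, the innovation term produces the gain difference multiplied by $\mathrm{d}Y_s=\observ(u^{\mathrm{ref}}_s)\mathrm{d}s+\Gamma_s\mathrm{d}V_s$ and by $\tfrac{h_s+\mathbb{E}_Y[h_s]}{2}$. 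The gain difference is controlled by $\big(\mathbb{E}_Y[\norm{\tilde u^1_s-\tilde u^2_s}{\HHilbert}^2]\big)^{1/2}+\big(\mathbb{E}_Y[|h^1_s-h^2_s|^2]\big)^{1/2}$, so the Gr\"onwall coefficient inevitably contains the factors $\mathbb{E}_Y\left[|\observ(u^{\mathrm{ref}}_s)|^2\right]$ and $|\mathbb{E}_Y[h_s]|^2$. Neither is bounded: the robust bound \eqref{variance bound tilde u} (and its consequence \eqref{variance bound potential fixed point}) controls only \emph{centered} conditional second moments, not the conditional mean of $h$, and $\observ(u^{\mathrm{ref}})$ is merely square integrable. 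Hence the constant in your contraction is a random, a priori unbounded quantity, and the Banach fixed point argument in $L^2(\Omega\times[0,\tau])$ does not close; the same problem reappears inside your ``inner loop'' when you freeze $(K,\bar h)$ and try to contract on $K$.

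This is precisely why the paper's proof has a second layer that your proposal omits: the modelled observation is truncated through $\observ^k(v)=\tilde{\indicator}_k\left(\left|\mathbb{E}_Y[\observ(v)]\right|\right)\observ(v)$ and the data through $\mathrm{d}Y^l_t=\tilde{\indicator}_l\left(\mathbb{E}_Y\left[|\observ(u^{\mathrm{ref}}_t)|^2\right]\right)\mathrm{d}Y_t$, which makes the offending coefficients bounded by $k+1$ and $l+1$ and renders the contraction constant deterministic. One then needs the stopping times \eqref{definition stopping times}, the a priori second moment estimate \eqref{bound 2nd absolute moment} for $\mathbb{E}_Y[\norm{\bar u^k_t}{\HHilbert}^2]$ (itself obtained only for the truncated dynamics, by Gr\"onwall and Burkholder--Davis--Gundy), and the observation that the events $\{\tau^k>T\}\cap\{\tau^l_{\mathrm{ref}}>T\}$ exhaust $\Omega$ almost surely, in order to patch the truncated solutions into a solution of \eqref{nonlinear EnKBF} and to transfer uniqueness. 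Without this truncation-and-stopping mechanism your concatenation step has nothing to propagate, so the proposal as written does not yield the theorem.
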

	\begin{proof}
		For proving well posedness it is enough to restrict ourselves to a small time frame $[0,T]$ with $T$ chosen later on. The extension to arbitrary time frames can then be easily achieved by standard glueing arguments\newline
		
		The proof is separated into two steps. First we introduce partially stopped dynamics and show their well posedness via a fixed point argument. Next we show that these stopped dynamics must always coincide with solutions to the EnKBF on events that cover the whole probability space almost surely.\newline
		
		In the following we will make use of the semimartingale decomposition of the observation process $Y$. For highlighting that the true signal process $u$, given by \eqref{signal}, plays the role of a parameter to the EnKBF and to easily distinguish it from other processes encountered in the proof we will denote it by $u^{\mathrm{ref}}$. The observation process $Y$ is thus given by
		\begin{align}\label{specific form of observations}
			\mathrm{d}Y_t=\observ(u^{\mathrm{ref}}_t)\mathrm{d}t+\Gamma_t\mathrm{d}V_t.
		\end{align}

		\uline{Step 1:} \textit{Well posedness for partially stopped dynamics.}\newline

		Define for $k,l\in\N$ and  any $\HHilbert$-valued random variable $v$ the stopped observation function $\observ^k$ by
		\begin{align*}
			\observ^{k}(v):= \tilde{\indicator}_{k}\left(\left|\mathbb{E}_{Y}\left[\observ(v)\right]\right|\right)
			\observ(v),
		\end{align*}
		where $\tilde{\indicator}_{k}$ is the smoothed indicator function of Definition \ref{Definition - smoothed indicator}. Furthermore define the stopped observation process $Y^{l}$ by
		\begin{align*}
			\mathrm{d}Y^{l}_t
			:=\tilde{\indicator}_{l}\left(\mathbb{E}_Y\left[\left|\observ(u^{\mathrm{ref}}_t)\right|^2\right]\right)
			\mathrm{d}Y_t.
		\end{align*}

		In this step we show that there exists a unique solution $\bar{u}^k$ of the (partially) stopped dynamics
		\begin{align}\label{partially stopped mf EnKBF}
			\begin{split}
				\mathrm{d}\bar{u}^k_t
				&=\Adrift(\bar{u}^k_t)\mathrm{d}t+\Bvol(\bar{u}^k_t)\mathrm{d}\bar{W}_t
				\\&\phantom{=}+
				\mathbb{Cov}_Y\left[\bar{u}^k_t,\observ^{k}\left(\bar{u}^k_t\right)\right] R^{-1}_t\left(\mathrm{d}Y^{k}_t-
				\frac{\observ^{k}\left(\bar{u}^k_t\right)+\mathbb{E}_Y\left[\observ^{k}\left(\bar{u}^k_t\right)\right]}{2}
				\mathrm{d}t
				\right),
			\end{split}
		\end{align}
		via a fixed point argument with respect to the stopped modelled observations $\left(\observ(\bar{u}^k_t)\right)_{t\in[0,T]}$.\newline
		
		To this end we consider for a given process $h$, which is assumed to be an $\mathfrak{F}$-adapted, square integrable semimartingale, the unique solution $\tilde{u}$ of
		\begin{align}\label{fixed point equation stopped dynamics}
			\begin{split}
				\mathrm{d}\tilde{u}_t
				&=\Adrift(\tilde{u}_t)\mathrm{d}t+\Bvol(\tilde{u}_t)\mathrm{d}\bar{W}_t
				\\&\phantom{=}+
				\tilde{\indicator}_{k}\left(\left|\mathbb{E}_Y\left[h_t\right]\right|\right)
				\mathbb{Cov}_Y\left[\tilde{u}_t,h_t\right] R^{-1}_t\left(\mathrm{d}Y^{l}_t-
				\tilde{\indicator}_{k}\left(\left|\mathbb{E}_Y\left[h_t\right]\right|\right)
				\frac{h_t+\mathbb{E}_Y\left[h_t\right]}{2}
				\mathrm{d}t
				\right).
			\end{split}
		\end{align}
		Well posedness of \eqref{fixed point equation stopped dynamics} is assured by the standard (global) Lipschitz and growth conditions, which assure that a unique solution can be found by standard Picard fixed point arguments as found for example in \cite{CarmonaDelarue} for finite dimensional McKean--Vlasov equations\footnote{One can also use the approach of \cite{Liu}, which showed well posedness of McKean--Vlasov SPDEs, but only consider deterministic coefficients.}.  We define the map $\Xi$ by
		\begin{align*}
			\Xi(h):=\observ(\tilde{u}).
		\end{align*}
		Since $\observ(\tilde{u})$ is an $\mathfrak{F}$-adapted, square integrable semimartingale, the existence and uniqueness of solutions to \eqref{partially stopped mf EnKBF} corresponds to the existence and uniqueness of fixed points of $\Xi$. We prove this via a Banach fixed point argument and thus have to show the contractivity of $\Xi$. Due to the Lipschitz continuity of $\observ$, this further reduces to the problem of showing that the solution map $h\mapsto \tilde{u}$ defined by the equation \eqref{general observation function structure} is Lipschitz, with constant strictly smaller than $1/\mathrm{Lip}(\observ)$.\newline
		
		Since \eqref{fixed point equation stopped dynamics} is of the form \eqref{general observation function structure}, the process $\tilde{u}$ must also satisfy the uniform variance bound \eqref{variance bound tilde u} corresponding to the law of total variance. Therefore, by the  Lipschitz  continuity of $\observ$, we can assume that any potential fixed point $h$ satisfies
		\begin{align}\label{variance bound potential fixed point}
			\mathbb{Var}_Y\left[h_t\right]
			=\mathrm{tr}_{\R^{d_y}}\mathbb{Cov}_Y\left[h_t\right]
			\leq \mathrm{Lip}(\HHilbert) \boundB e^{\oneSidedLip t}.
		\end{align}
		
		To show the contractivity of $\Xi$, let $h^i,~i=1,2$ be two given processes and denote by $\tilde{u}^i,~i=1,2$ the corresponding solutions to \eqref{fixed point equation stopped dynamics}. Using the uniform variance bounds \eqref{variance bound potential fixed point}, as well as the Lipschitz continuity of $\tilde{\indicator}_k$ and its boundedness $0\leq\tilde{\indicator}_k\leq 1$, we derive the following bound for the gain difference
		\begin{align}\label{covariance difference}
			\begin{split}
				&\norm{
					\tilde{\indicator}_{k}\left(\left|\mathbb{E}_Y\left[h^1_t\right]\right|\right)
					\mathbb{Cov}_Y\left[\tilde{u}^1_t,h^1_t\right]
					-
					\tilde{\indicator}_{k}\left(\left|\mathbb{E}_Y\left[h^2_t\right]\right|\right)
					\mathbb{Cov}_Y\left[\tilde{u}^2_t,h^2_t\right]}{L(\R^{d_y},\HHilbert)}
				\\&\leq
				\left|\tilde{\indicator}_{k}\left(\left|\mathbb{E}_Y\left[h^1_t\right]\right|\right)-\tilde{\indicator}_{k}\left(\left|\mathbb{E}_Y\left[h^2_t\right]\right|\right)\right|~
				\norm{
					\mathbb{Cov}_Y\left[\tilde{u}^1_t,h^1_t\right]
				}{L(\R^{d_y},\HHilbert)}
				\\&\phantom{=}+
				\sqrt{\mathbb{E}_Y\left[\norm{\tilde{u}^1_s-\tilde{u}^2_s}{\HHilbert}^2\right]}
				\sqrt{\mathbb{E}_Y\left[\norm{h^1_s-\mathbb{E}_Y\left[h^1_s\right]}{\R^{d_y}}^2\right]}
				\\&\phantom{=}+
				\sqrt{\mathbb{E}_Y\left[\norm{\tilde{u}^2_t-\mathbb{E}_Y\left[\tilde{u}^2_t\right]}{\HHilbert}^2\right]}
				\sqrt{\mathbb{E}_Y\left[\norm{h^1_t-h^2_t}{\R^{d_y}}^2\right]}
				\\&\leq
				\left(\left(\mathrm{Lip}(\tilde{\indicator}_k)+1\right)\sqrt{\mathrm{Lip}(\observ)}+1\right)\boundB e^{\oneSidedLip t}
				\left(
				\sqrt{\mathbb{E}_Y\left[\norm{\tilde{u}^1_s-\tilde{u}^2_s}{\HHilbert}^2\right]}
				+
				\sqrt{\mathbb{E}_Y\left[\norm{h^1_s-h^2_s}{\R^{d_y}}^2\right]}
				\right).
			\end{split}
		\end{align}

		Using Itô's formula for the squared norm, we derive
		\begin{align*}
			&\norm{\tilde{u}^1_t-\tilde{u}^2_t}{\HHilbert}^2
			=
			2\int_{0}^{t}
			\prescript{}{\VHilbert'}{\left\langle \Adrift(\tilde{u}^1_s)-\Adrift(\tilde{u}^2_s),\tilde{u}^1_s-\tilde{u}^2_s\right\rangle_{\VHilbert}}\mathrm{d}s
			\\&+
			2\int_{0}^{t}
			\left\langle \tilde{u}^1_s-\tilde{u}^2_s,
			\left(\tilde{\indicator}_{k}\left(\left|\mathbb{E}_Y\left[h^1_t\right]\right|\right)
			\mathbb{Cov}_Y\left[\tilde{u}^1_s,h^1_s\right]
			-
			\tilde{\indicator}_{k}\left(\left|\mathbb{E}_Y\left[h^2_t\right]\right|\right)
			\mathbb{Cov}_Y\left[\tilde{u}^2_s,h^2_s\right]\right) R^{-1}_s\mathrm{d}Y^k_s\right\rangle_{\HHilbert}
			\\&-
			2\int_{0}^{t}
			\left\langle \tilde{u}^1_s-\tilde{u}^2_s,
			\left(
			\tilde{\indicator}_{k}\left(\left|\mathbb{E}_Y\left[h^1_s\right]\right|\right)
			\mathbb{Cov}_Y\left[\tilde{u}^1_s,h^1_s\right]
			-
			\tilde{\indicator}_{k}\left(\left|\mathbb{E}_Y\left[h^2_s\right]\right|\right)
			\mathbb{Cov}_Y\left[\tilde{u}^2_s,h^2_s\right]\right)
			\right.
			\\&\left.\phantom{2\int_{0}^{t}\langle u^1-u^2,++}
			R^{-1}_s
			\tilde{\indicator}_{k}\left(\left|\mathbb{E}_Y\left[h^1_s\right]\right|\right)
			\frac{h^1_s+\mathbb{E}_Y\left[h^1_s\right]}{2}
			\right\rangle_{\HHilbert}
			\mathrm{d}s
			\\&-
			2\int_{0}^{t}
			\left\langle \tilde{u}^1_s-\tilde{u}^2_s,
			\tilde{\indicator}_{k}\left(\left|\mathbb{E}_Y\left[h^2_s\right]\right|\right)
			\mathbb{Cov}_Y\left[\tilde{u}^2_s,h^2_s\right] R^{-1}_s  
			\right.
			\\&\left.\phantom{2\int_{0}^{t}\langle u^1-u^2,++}
			\frac{
				\tilde{\indicator}_{k}\left(\left|\mathbb{E}_Y\left[h^1_s\right]\right|\right)
				\left(h^1_s+\mathbb{E}_Y\left[h^1_s\right]\right)
				-
				\tilde{\indicator}_{k}\left(\left|\mathbb{E}_Y\left[h^2_s\right]\right|\right)
				\left(h^2_s+\mathbb{E}_Y\left[h^2_s\right]\right)
			}{2}
			\right\rangle_{\HHilbert}
			\mathrm{d}s
			\\&+
			2\int_{0}^{t}
			\left\langle \tilde{u}^1_s-\tilde{u}^2_s,\left(\Bvol(\tilde{u}^1_s)-\Bvol(\tilde{u}^2_s)\right)\mathrm{d}W_s
			\right\rangle_{\HHilbert}
			\\&+
			\sum_{k\in\N}
			\int_{0}^{t}
			\left\langle \nu_k,
			\left(
			\tilde{\indicator}_{k}\left(\left|\mathbb{E}_Y\left[h^1_s\right]\right|\right)
			\mathbb{Cov}_Y\left[\tilde{u}^1_s,h^1_s\right]
			-
			\tilde{\indicator}_{k}\left(\left|\mathbb{E}_Y\left[h^2_s\right]\right|\right)
			\mathbb{Cov}_Y\left[\tilde{u}^2_s,h^2_s\right]\right) \right\rangle_{\HHilbert}
			R^{-1}_s
			\\&\phantom{=\sum_{k\in\N}
				\int_{0}^{t}}
			\left\langle \nu_k,
			\left(
			\tilde{\indicator}_{k}\left(\left|\mathbb{E}_Y\left[h^1_s\right]\right|\right)
			\mathbb{Cov}_Y\left[\tilde{u}^1_s,h^1_s\right]
			-
			\tilde{\indicator}_{k}\left(\left|\mathbb{E}_Y\left[h^2_s\right]\right|\right)
			\mathbb{Cov}_Y\left[\tilde{u}^2_s,h^2_s\right]\right) \right\rangle_{\HHilbert}^{\mathrm{T}}
			\mathrm{d}s
			\\&+
			\sum_{k\in\N}\sum_{n\in\N}
			\int_{0}^{t}
			\QeigValues_n
			\left\langle
			\nu_k,(\Bvol(\tilde{u}^1_s)-\Bvol(\tilde{u}^2_s))\QeigVectors_n
			\right\rangle_{\HHilbert}^2
			\mathrm{d}s.
		\end{align*}
		
		Now we note that by Parseval and the onesided Lipschitz condition \eqref{one sided Lipschitz} we have
		\begin{align}\label{proof obs continuity - inequ Ito correction}
			\begin{split}
				&\sum_{k\in\N}\sum_{n\in\N}
				\QeigValues_n
				\left\langle
				\nu_k,(\Bvol(\tilde{u}^1_s)-\Bvol(\tilde{u}^2_s))\QeigVectors_n
				\right\rangle_{\HHilbert}^2
				=
				\norm{\left(\Bvol(u)-\Bvol(v)\right)\circ\sqrt{\Qcor}}{ \HilbSchm{\UHilbert}{\HHilbert} }^2
				\leq
				\oneSidedLip~\norm{\tilde{u}^1_s-\tilde{u}^2_s}{\HHilbert}^2,
			\end{split}
		\end{align}
		as well as
		\begin{align}\label{proof obs continuity - inequ obsIto}
			\begin{split}
				&\sum_{k\in\N}
				\left\langle \nu_k,
				\left(
				\tilde{\indicator}_{k}\left(\left|\mathbb{E}_Y\left[h^1_s\right]\right|\right)\mathbb{Cov}_Y\left[\tilde{u}^1_s,h^1_s\right]
				-
				\tilde{\indicator}_{k}\left(\left|\mathbb{E}_Y\left[h^2_s\right]\right|\right)
				\mathbb{Cov}_Y\left[\tilde{u}^2_s,h^2_s\right]\right) \right\rangle_{\HHilbert}
				R^{-1}_s
				\\&\phantom{=\times} 
				\left\langle \nu_k,
				\left(
				\tilde{\indicator}_{k}\left(\left|\mathbb{E}_Y\left[h^1_s\right]\right|\right)
				\mathbb{Cov}_Y\left[\tilde{u}^1_s,h^1_s\right]
				-
				\tilde{\indicator}_{k}\left(\left|\mathbb{E}_Y\left[h^2_s\right]\right|\right)
				\mathbb{Cov}_Y\left[\tilde{u}^2_s,h^2_s\right]\right) \right\rangle_{\HHilbert}^{\mathrm{T}}
				\\&=
				\norm{\left(
					\tilde{\indicator}_{k}\left(\left|\mathbb{E}_Y\left[h^1_s\right]\right|\right)
					\mathbb{Cov}_Y\left[\tilde{u}^1_s,h^1_s\right]
					-
					\tilde{\indicator}_{k}\left(\left|\mathbb{E}_Y\left[h^2_s\right]\right|\right)
					\mathbb{Cov}_Y\left[\tilde{u}^2_s,h^2_s\right]\right)R^{-1/2}_s}{L(\R^{d_y},\HHilbert)}^2
				\\&\leq
				\left(\left(\mathrm{Lip}(\tilde{\indicator}_k)+1\right)\sqrt{\mathrm{Lip}(\observ)}+1\right)^2\boundB^2 e^{^2\oneSidedLip t} \left|R^{-1/2}_s\right|^2
				\left(\mathbb{E}_Y\left[\norm{\tilde{u}^1_s-\tilde{u}^2_s}{\HHilbert}^2\right]
				+
				\mathbb{E}_Y\left[\norm{h^1_s-h^2_s}{\R^{d_y}}^2\right]\right).
			\end{split}
		\end{align}
		
		Furthermore we note that
		\begin{align*}
			&\tilde{\indicator}_{k}\left(\left|\mathbb{E}_Y\left[h^2_s\right]\right|\right)
			\mathbb{E}_Y\left[
			\left|
			\tilde{\indicator}_{k}\left(\left|\mathbb{E}_Y\left[h^1_s\right]\right|\right)
			h^1_s
			-
			\tilde{\indicator}_{k}\left(\left|\mathbb{E}_Y\left[h^2_s\right]\right|\right)
			h^2_s
			\right|^2
			\right]
			\\&\leq
			2\tilde{\indicator}_{k}\left(\left|\mathbb{E}_Y\left[h^2_s\right]\right|\right)
			\tilde{\indicator}_{k}\left(\left|\mathbb{E}_Y\left[h^1_s\right]\right|\right)
			\mathbb{E}_Y\left[\left|h^1_s-h^2_s\right|^2\right]
			\\&\phantom{=}+
			2
			\tilde{\indicator}_{k}\left(\left|\mathbb{E}_Y\left[h^2_s\right]\right|\right)\mathbb{E}_Y\left[\left|h^2_s\right|^2\right]
			\left|\tilde{\indicator}_{k}\left(\left|\mathbb{E}_Y\left[h^1_s\right]\right|\right)
			-
			\tilde{\indicator}_{k}\left(\left|\mathbb{E}_Y\left[h^2_s\right]\right|\right)
			\right|^2
			\\&\leq 
			2\mathbb{E}_Y\left[\left|h^1_s-h^2_s\right|^2\right]
			+
			2
			\tilde{\indicator}_{k}\left(\left|\mathbb{E}_Y\left[h^2_s\right]\right|\right)
			\left(
			\left|\mathbb{E}_Y\left[h^2_s\right]\right|^2
			+
			\mathbb{E}_Y\left[\left|h^2_s-\mathbb{E}_Y\left[h^2_s\right]\right|^2\right]
			\right)
			\mathrm{Lip}\left(\tilde{\indicator}_k\right)
			\left|\mathbb{E}_Y\left[h^1_s-h^2_s\right]\right|^2
			\\&\leq
			2\left(1+\left((k+1)^2+\mathrm{Lip}(\observ) \boundB e^{\oneSidedLip t}\right)\mathrm{Lip}\left(\tilde{\indicator}_k\right)\right)
			\mathbb{E}_Y\left[\left|h^1_s-h^2_s\right|^2\right],
		\end{align*}
		where we used that $\tilde{\indicator}_k\leq\indicator_{[0,k+1]}$ and the variance bound \eqref{variance bound potential fixed point} to derive the last inequality.\newline
		
		The variance bounds \eqref{variance bound tilde u} and \eqref{variance bound potential fixed point} also imply that 
		\begin{align}\label{ineqaulity state-observation covariance}
			\norm{\mathbb{Cov}_Y\left[\tilde{u}^2_s,h^2_s\right]}{L(R^{d_x},\HHilbert)}\leq \sqrt{\mathrm{Lip}(\observ)} \boundB e^{\oneSidedLip t}.
		\end{align}
		
		If we now take the supremum on the time interval $[0,T]$ and the conditional expectation $\mathbb{E}_Y$, standard Cauchy--Schwarz inequalities, together with \eqref{proof obs continuity - inequ Ito correction}, \eqref{proof obs continuity - inequ obsIto} and the one-sided Lipschitz condition \eqref{one sided Lipschitz}, we derive that there exists a constant $\kappa_1(T)$, that only depends on the timeframe $T$, such that
		\begin{align}\label{Estimate u1-u2}
			\begin{split}
				&\mathbb{E}_Y\left[\sup_{t\leq T}\norm{\tilde{u}^1_t-\tilde{u}^2_t}{\HHilbert}^2\right]
				\\&\leq
				\kappa_1(T)
				\int_{0}^{T} \mathbb{E}_Y\left[\norm{\tilde{u}^1_s-\tilde{u}^2_s}{\HHilbert}^2\right]
				+
				\mathbb{E}_Y\left[\norm{h^1_s-h^2_s}{\HHilbert}^2\right]
				\mathrm{d}s
				\\&\phantom{=}+
				\kappa_1(T)\int_{0}^{t}
				\left(\mathbb{E}_Y\left[\norm{\tilde{u}^1_s-\tilde{u}^2_s}{\HHilbert}^2\right]
				+
				\mathbb{E}_Y\left[\norm{h^1_s-h^2_s}{\HHilbert}^2\right]\right)
				\tilde{\indicator}_{k}\left(\left|\mathbb{E}_Y\left[h^1_s\right]\right|\right)
				\mathbb{E}_Y\left[\norm{h^1_s+\mathbb{E}_Y\left[h^1_s\right]}{\HHilbert}^2\right]
				\mathrm{d}s
				\\&\phantom{=}+
				2\mathbb{E}_Y\left[\sup_{t\leq T}
				\left|\int_{0}^{t}
				\left\langle \tilde{u}^1_s-\tilde{u}^2_s,
				\left(\mathbb{Cov}_Y\left[\tilde{u}^1_s,h^1_s\right]-\mathbb{Cov}_Y\left[\tilde{u}^2_s,h^2_s\right]\right) R^{-1}_s\mathrm{d}Y_s\right\rangle_{\HHilbert}\right|
				\right]
				\\&\phantom{=}+
				2
				\mathbb{E}_Y\left[
				\sup_{t\leq T}
				\left|
				\int_{0}^{t}
				\left\langle \tilde{u}^1_s-\tilde{u}^2_s,\left(B(\tilde{u}^1_s)-B(\tilde{u}^2_s)\right)\mathrm{d}W_s
				\right\rangle_{\HHilbert}
				\right|
				\right].
			\end{split}
		\end{align}

		Note that due to \eqref{variance bound potential fixed point} we get
		\begin{align*}
			\tilde{\indicator}_{k}\left(\left|\mathbb{E}_Y\left[h^1_s\right]\right|\right)
			\mathbb{E}_Y\left[\norm{h^1_s+\mathbb{E}_Y\left[h^1_s\right]}{\HHilbert}^2\right]
			&\leq
			2
			\mathbb{E}_Y\left[\norm{h^1_s-\mathbb{E}_Y\left[h^1_s\right]}{\HHilbert}^2\right]
			+
			8
			\tilde{\indicator}_{k}\left(\left|\mathbb{E}_Y\left[h^1_s\right]\right|\right)
			\norm{\mathbb{E}_Y\left[h^1_s\right]}{\HHilbert}^2
			\\&\leq
			2\mathrm{Lip}(\observ) \boundB e^{\oneSidedLip t}
			+
			8(k+1)^2
		\end{align*}
		Thus if we now use the specific form of the observations \eqref{specific form of observations} and take the full expectation in \eqref{Estimate u1-u2} we derive
		\begin{align*}
			&\mathbb{E}\left[\sup_{t\leq T}\norm{\tilde{u}^1_t-\tilde{u}^2_t}{\HHilbert}^2\right]
			\\&\leq
			\kappa_2(T,k)
			\int_{0}^{T} \mathbb{E}\left[\norm{\tilde{u}^1_s-\tilde{u}^2_s}{\HHilbert}^2\right]
			+
			\mathbb{E}\left[\norm{h^1_s-h^2_s}{\HHilbert}^2\right]
			\mathrm{d}s
			\\&+
			2\mathbb{E}\left[\int_{0}^{T}
			\mathbb{E}_Y\left[
			\left|\left\langle \tilde{u}^1_s-\tilde{u}^2_s,
			\left(\mathbb{Cov}_Y\left[\tilde{u}^1_s,h^1_s\right]-\mathbb{Cov}_Y\left[\tilde{u}^2_s,h^2_s\right]\right) R^{-1}_s 
			\tilde{\indicator}_{l}\left(\mathbb{E}_Y\left[\left|\observ(u^{\mathrm{ref}}_s)\right|^2\right]\right)
			\observ(u^{\mathrm{ref}}_s)\right\rangle_{\HHilbert}\right|
			\right]\right]
			\\&+
			2\mathbb{E}\left[\sup_{t\leq T}\int_{0}^{t}
			\left\langle \tilde{u}^1_s-\tilde{u}^2_s,
			\left(\mathbb{Cov}_Y\left[\tilde{u}^1_s,h^1_s\right]-\mathbb{Cov}_Y\left[\tilde{u}^2_s,h^2_s\right]\right) R^{-1}_s \tilde{\indicator}_{l}\left(\mathbb{E}_Y\left[\left|\observ(u^{\mathrm{ref}}_s)\right|^2\right]\right) \Gamma_s\mathrm{d}V_s \right\rangle_{\HHilbert}
			\right]
			\\&+
			2\mathbb{E}\left[\sup_{t\leq T}\int_{0}^{t}
			\left\langle \tilde{u}^1_s-\tilde{u}^2_s,\left(\Bvol(\tilde{u}^1_s)-\Bvol(\tilde{u}^2_s)\right)\mathrm{d}W_s
			\right\rangle_{\HHilbert}
			\right]
			,
		\end{align*}
		for some constant $\kappa_2(T,k)$, where we of course used that $\mathbb{E}\left[~\mathbb{E}_{Y}\left[\cdot\right]\right]=\mathbb{E}\left[\cdot\right]$.\newline
		
		To dominate the second term on the right hand side of the inequality  we use \eqref{covariance difference} together with the fact that $\tilde{\indicator}_{k}\left(\mathbb{E}_Y\left[\left|\observ(u^{\mathrm{ref}}_s)\right|^2\right]\right)\mathbb{E}_Y\left[
		\left|\observ(u^{\mathrm{ref}}_s)\right|^2\right] \leq (k+1)$. For the other two terms we use the Burkholder--Davis--Gundy inequality together with \eqref{proof obs continuity - inequ Ito correction} and \eqref{proof obs continuity - inequ obsIto} to derive that there exists a constant $\kappa_3\left(T,k,l\right)>0$ we have
		\begin{align*}
			&\mathbb{E}\left[\sup_{t\leq T}\norm{\tilde{u}^1_t-\tilde{u}^2_t}{\HHilbert}^2\right]
			\leq
			\kappa_3\left(T,k,l\right)
			\int_{0}^{T} \mathbb{E}\left[\norm{\tilde{u}^1_s-\tilde{u}^2_s}{\HHilbert}^2\right]
			+
			\mathbb{E}\left[\norm{h^1_s-h^2_s}{\HHilbert}^2\right]
			\mathrm{d}s
			,
		\end{align*}
		which by the (deterministic) Grönwall Lemma implies
		\begin{align*}
			\mathbb{E}\left[\sup_{t\leq T}\norm{\tilde{u}^1_t-\tilde{u}^2_t}{\HHilbert}^2\right]
			\leq
			\kappa_3\left(T,k,l\right)
			\exp\left(T~\kappa_3\left(T,\norm{\observ}{\infty}\right)\right)
			\int_{0}^{T} 
			\mathbb{E}\left[\norm{h^1_s-h^2_s}{\HHilbert}^2\right]
			\mathrm{d}s,
		\end{align*}
		and thus for $T$ small enough we indeed have the desired contraction property.\newline
		
		\uline{Step 2:} \textit{The stopping argument.}\newline
		
		First we define the stopping times which we use for our argument by 
		\begin{align}\label{definition stopping times}
			\begin{split}
				\tau^k
				&:=\inf\left\{~t\geq 0~:~\left|\mathbb{E}_Y\left[\observ(u^k_t)\right]\right|^2> k ~\right\}
				\\
				\tau^l_{\mathrm{ref}}
				&:=
				\inf\left\{~t\geq 0~:~\mathbb{E}_Y\left[\left|\observ(u^{\mathrm{ref}}_t)\right|^2\right]> l  ~\right\},
			\end{split}
		\end{align}
		and note that both are stopping times with respect to the filtration generated by $Y$. This implies that for any stochastic process $(z_t)_{t\geq 0}$ and any (suitably integrable) functions $f,g$, the identities
		\begin{align*}
			g\left(\mathbb{E}_Y\left[f(z_{\min\{\tau^k,t\}})\right]\right)
			&=
			\left. g\left(\mathbb{E}_Y\left[f(z_{s})\right]\right)
			\right|_{s=\min\{\tau^k,t\}}
			\\
			g\left(\mathbb{E}_Y\left[f(z_{\min\{\tau^l_{\mathrm{ref}},t\}})\right]\right)
			&=
			\left. g\left(\mathbb{E}_Y\left[f(z_{s})\right]\right)
			\right|_{s=\min\{\tau^l_{\mathrm{ref}},t\}}
		\end{align*}
		hold and therefore $\bar{u}^k$ is a solution to the EnKBF \eqref{nonlinear EnKBF} on the random time interval $[0,\min\{\tau^k,\tau^l_{\mathrm{ref}}\}]$. By the uniqueness of \eqref{fixed point equation stopped dynamics}, $\bar{u}^k$ and $\bar{u}^{k+1}$ must even coincide on $[0,\min\{\tau^k,\tau^l_{\mathrm{ref}}\}]$.  Thus we can construct a solution to \eqref{nonlinear EnKBF} using the solutions to \eqref{fixed point equation stopped dynamics}. In order to conclude existence and uniqueness of the EnKBF, we just have to show that
		\begin{align*}
			\bigcup_{k,l\in\N}\left\{\tau^k> T\right\}\cap\left\{\tau^l_{\mathrm{ref}}> T\right\}
		\end{align*}
		defines a covering of the sample space almost surely.\newline
		
		To this end we first note that
		\begin{align*}
			\mathrm{d}\norm{\bar{u}^k_t}{\HHilbert}^2
			&=
			2\prescript{}{\VHilbert'}{\left\langle \Adrift(\bar{u}^k_t), \bar{u}^k_t
				\right\rangle_{\VHilbert}}\mathrm{d}t
			+
			\left\langle\bar{u}^k_t,\Bvol(\bar{u}^k_t)\mathrm{d}\bar{W}_t\right\rangle_{\HHilbert}
			+
			\mathrm{tr}_{\HHilbert}\left[\Bvol(\bar{u}^k_t)\sqrt{\Qcor}
			\left(\Bvol(\bar{u}^k_t)\sqrt{\Qcor}\right)'\right]\mathrm{d}t
			\\&\phantom{=}+
			2\left\langle\bar{u}^k_t,
			\mathbb{Cov}_Y\left[\bar{u}^k_t,\observ^{k}\left(\bar{u}^k_t\right)\right] R^{-1}_t
			\left(\mathrm{d}Y^{l}_t-
			\frac{\observ^{k}\left(\bar{u}^k_t\right)+\mathbb{E}_Y\left[\observ^{k}\left(\bar{u}^k_t\right)\right]}{2}
			\mathrm{d}t
			\right)			
			\right\rangle_{\HHilbert}
			\\&\phantom{=}+
			\mathrm{tr}_{\HHilbert}\left[\mathbb{Cov}_Y\left[\bar{u}^k_t,\observ^{k}\left(\bar{u}^k_t\right)\right] R^{-1}_t \mathbb{Cov}_Y\left[\observ^{k}\left(\bar{u}^k_t\right),\bar{u}^k_t\right]\right].
		\end{align*}
		
		Taking the conditional expectation thus gives us
		\begin{align}\label{second moment inequality 1}
			\begin{split}
				&\mathrm{d}\mathbb{E}_Y\left[\norm{\bar{u}^k_t}{\HHilbert}^2\right]
				\\&=
				2\mathbb{E}_Y\left[\prescript{}{\VHilbert'}{\left\langle \Adrift(\bar{u}^k_t), \bar{u}^k_t
					\right\rangle_{\VHilbert}}\right]
				\mathrm{d}t
				+
				\mathbb{E}_Y\left[\mathrm{tr}_{\HHilbert}\left[\Bvol(\bar{u}^k_t)\sqrt{\Qcor}
				\left(\Bvol(\bar{u}^k_t)\sqrt{\Qcor}\right)'\right]\right]\mathrm{d}t
				\\&\phantom{=}+
				2\mathbb{E}_Y\left[
				\left\langle\bar{u}^k_t,
				\mathbb{Cov}_Y\left[\bar{u}^k_t,\observ^{k}\left(\bar{u}^k_t\right)\right] R^{-1}_t
				\left(\mathrm{d}Y^{l}_t-
				\frac{\observ^{k}\left(\bar{u}^k_t\right)+\mathbb{E}_Y\left[\observ^{k}\left(\bar{u}^k_t\right)\right]}{2}
				\mathrm{d}t
				\right)			
				\right\rangle_{\HHilbert}
				\right]
				\\&\phantom{=}+
				\tilde{\indicator}_{l}\left(\mathbb{E}_Y\left[\left|\observ(u^{\mathrm{ref}}_t)\right|^2\right]\right)^2
				\mathrm{tr}_{\HHilbert}\left[\mathbb{Cov}_Y\left[\bar{u}^k_t,\observ^{k}\left(\bar{u}^k_t\right)\right] R^{-1}_t \mathbb{Cov}_Y\left[\observ^{k}\left(\bar{u}^k_t\right),\bar{u}^k_t\right]\right]\mathrm{d}t.
			\end{split}
		\end{align}
		
		The first two terms on the right hand side can be bounded using the growth condition \eqref{growth condition} and the diffusivity bound \eqref{bounded signal diffusion}. The last term is just the squared shadow 2-norm of the operator $\mathbb{Cov}_Y\left[\bar{u}^k_t,\observ^{k}\left(\bar{u}^k_t\right)\right] R^{-1/2}_t$, which we can estimate using Parseval and the robust variance bound \eqref{variance bound tilde u} as
		\begin{align*}
			&\mathrm{tr}_{\HHilbert}\left[\mathbb{Cov}_Y\left[\bar{u}^k_t,\observ^{k}\left(\bar{u}^k_t\right)\right] R^{-1}_t \mathbb{Cov}_Y\left[\observ^{k}\left(\bar{u}^k_t\right),\bar{u}^k_t\right]\right]
			\leq
			\mathrm{tr}_{\HHilbert}\left[\mathbb{Cov}_Y\left[\bar{u}^k_t,\observ\left(\bar{u}^k_t\right)\right] R^{-1}_t \mathbb{Cov}_Y\left[\observ\left(\bar{u}^k_t\right),\bar{u}^k_t\right]\right]
			\\&\leq
			\sum_{j\in\N} \mathbb{Cov}_Y\left[\left\langle\nu_j,\bar{u}^k_t\right\rangle_{\HHilbert},\observ\left(\bar{u}^k_t\right)\right]R^{-1}_t
			\mathbb{Cov}_Y\left[\observ\left(\bar{u}^k_t\right),\left\langle\nu_j,\bar{u}^k_t\right\rangle_{\HHilbert}\right]
			\\&\leq
			\sum_{j\in\N}
			\mathbb{E}_Y\left[\left\langle\nu_j,
			\bar{u}^k_t-\mathbb{E}_Y\left[\bar{u}^k_t\right]
			\right\rangle_{\HHilbert}^2\right] \left|R^{-1}_t\right|
			\mathbb{E}_Y\left[\left|
			\observ(\bar{u}^k_t)-\mathbb{E}_Y\left[\observ(\bar{u}^k_t)\right]
			\right|^2\right]
			\\&\leq
			\mathbb{E}_Y\left[\norm{\bar{u}^k_t-\mathbb{E}_Y\left[\bar{u}^k_t\right]}{\HHilbert}^2\right]
			\left|R^{-1}_t\right|
			\mathbb{E}_Y\left[\left|
			\observ(\bar{u}^k_t)-\mathbb{E}_Y\left[\observ(\bar{u}^k_t)\right]
			\right|^2\right]^2
			\leq \mathrm{Lip}(\observ)\left|R^{-1}_t\right| \boundB^2 e^{2\oneSidedLip t}.
		\end{align*}
		
		Thus we can bound \eqref{second moment inequality 1} by
		\begin{align}\label{second moment inequality 2}
			\begin{split}
				\mathrm{d}\mathbb{E}_Y\left[\norm{\bar{u}^k_t}{\HHilbert}^2\right]
				&=
				\left(2\coerB\mathbb{E}_Y\left[\norm{\bar{u}^k_t}{\HHilbert}^2\right] + 2\coerC
				+ \boundB
				+
				\mathrm{Lip}(\observ)\left|R^{-1}_t\right| \boundB^2 e^{2\oneSidedLip t}
				\right)\mathrm{d}t
				\\&\phantom{=}+2
				\mathbb{E}_Y\left[
				\left\langle\bar{u}^k_t,
				\mathbb{Cov}_Y\left[\bar{u}^k_t,\observ^{k}\left(\bar{u}^k_t\right)\right]
				\right]
				R^{-1}_t
				\mathrm{d}Y^{l}_t\right\rangle_{\HHilbert}
				\\&\phantom{=}-2
				\mathbb{E}_Y\left[
				\left\langle\bar{u}^k_t,
				\mathbb{Cov}_Y\left[\bar{u}^k_t,\observ^{k}\left(\bar{u}^k_t\right)\right] R_t^{-1} \frac{\observ^k(\bar{u}^k_t)+\mathbb{E}_Y\left[\observ^k(\bar{u}^k_t)\right]}{2}
				\right\rangle_{\HHilbert}
				\right]\mathrm{d}t
				.
			\end{split}
		\end{align}
		
		We use \eqref{ineqaulity state-observation covariance} and $\tilde{\indicator}_k\leq 1$ to derive that
		\begin{align*}
			&
			\left|
			\mathbb{E}_Y\left[
			\left\langle\bar{u}^k_t,
			\mathbb{Cov}_Y\left[\bar{u}^k_t,\observ^{k}\left(\bar{u}^k_t\right)\right] R_t^{-1} \frac{\observ^k(\bar{u}^k_t)+\mathbb{E}_Y\left[\observ^k(\bar{u}^k_t)\right]}{2}
			\right\rangle_{\HHilbert}
			\right]
			\right|
			\\& \leq
			2\mathbb{E}_Y\left[\norm{\bar{u}^k_t}{\HHilbert}^2\right]
			+
			\tilde{\indicator}_{k}\left(\left|\mathbb{E}_Y\left[\observ(\bar{u}^k_t)\right]\right|\right)^2
			\norm{\mathbb{Cov}_Y\left[\bar{u}^k_t,\observ^{k}\left(\bar{u}^k_t\right)\right]}{\LinSpace{R^{d_x}}{\HHilbert}}^2
			\left|R^{-1}_t\right|
			\mathbb{E}_Y\left[|\observ(\bar{u}^k_t)|^2\right]
			\\& \leq
			\left(
			2
			+
			\mathrm{Lip}(\observ)^3
			\boundB^2 e^{2\oneSidedLip t} \left|R^{-1}_t\right|		
			\right)
			\mathbb{E}_Y\left[\norm{\bar{u}^k_t}{\HHilbert}^2\right]
			+
			\mathrm{Lip}(\observ)
			\boundB^2 e^{2\oneSidedLip t} \left|R^{-1}_t\right|
			\left|\observ(0)\right|^2.
		\end{align*}
		
		Thus, we note that there exist constants $\kappa_4(T)$ and $\kappa_5(T)$, only depending on the timeframe $T$, such that
		\begin{align*}
			\begin{split}
				\mathrm{d}\mathbb{E}_Y\left[\norm{\bar{u}^k_t}{\HHilbert}^2\right]
				&=
				\left(\kappa_4(T) \mathbb{E}_Y\left[\norm{\bar{u}^k_t}{\HHilbert}^2\right]
				+
				\kappa_5(T)
				\right)\mathrm{d}t
				\\&\phantom{=}+
				2
				\mathbb{E}_Y\left[
				\left\langle\bar{u}^k_t,
				\mathbb{Cov}_Y\left[\bar{u}^k_t,\observ^{k}\left(\bar{u}^k_t\right)\right]
				\right]
				R^{-1}_t
				\mathrm{d}Y^{l}_t\right\rangle_{\HHilbert}
				,
			\end{split}
		\end{align*}
		which by using the explicit form of the observations $Y$ can be rewritten as
		\begin{align}\label{second moment inequality 2}
			\begin{split}
				\mathrm{d}\mathbb{E}_Y\left[\norm{\bar{u}^k_t}{\HHilbert}^2\right]
				&=
				\left(\kappa_4(T) \mathbb{E}_Y\left[\norm{\bar{u}^k_t}{\HHilbert}^2\right]
				+
				\kappa_5(T)
				\right)\mathrm{d}t
				\\&\phantom{=}+
				2\mathbb{E}_Y\left[
				\left\langle\bar{u}^k_t,
				\mathbb{Cov}_Y\left[\bar{u}^k_t,\observ^{k}\left(\bar{u}^k_t\right)\right]
				\right]
				R^{-1}_t \tilde{\indicator}_{l}\left(\mathbb{E}_Y\left[\left|\observ(u^{\mathrm{ref}}_t)\right|^2\right]\right)
				\observ(u^{\mathrm{ref}}_t)
				\right\rangle_{\HHilbert}\mathrm{d}t
				\\&\phantom{=}+
				2\mathbb{E}_Y\left[
				\left\langle\bar{u}^k_t,
				\mathbb{Cov}_Y\left[\bar{u}^k_t,\observ^{k}\left(\bar{u}^k_t\right)\right]
				\right]
				R^{-1}_t \tilde{\indicator}_{l}\left(\mathbb{E}_Y\left[\left|\observ(u^{\mathrm{ref}}_t)\right|^2\right]\right)
				\Gamma_t\mathrm{d}V_t
				\right\rangle_{\HHilbert}.
			\end{split}
		\end{align}
		
		Again using \eqref{ineqaulity state-observation covariance} and the Burkholder--Davis--Gundy inequality we derive that there exist constants $\kappa_6(T)$, depending solely on $T$, and $\kappa_7(T,l)$, depending on $T$ and $l$, such that 
		\begin{align}\label{second moment inequality 2}
			\begin{split}
				\mathbb{E}\left[\sup_{t\leq T}\mathbb{E}_Y\left[\norm{\bar{u}^k_t}{\HHilbert}^2\right]\right]
				&=
				\kappa_6(T)
				\int_{0}^{T}
				\mathbb{E}_Y\left[\norm{\bar{u}^k_t}{\HHilbert}^2\right]\mathrm{d}t
				+
				\mathbb{E}_Y\left[\norm{u_0}{\HHilbert}^2\right]+T
				\kappa_7(T,l).
			\end{split}
		\end{align}
		
		Thus, by the Grönwall Lemma, we derive that for fixed $l$ and $T$
		\begin{align}\label{bound 2nd absolute moment}
			\mathbb{E}\left[\sup_{t\leq T}\mathbb{E}_Y\left[\norm{\bar{u}^k_t}{\HHilbert}^2\right]\right]
			\leq
			\exp\left(T\kappa_6(T)\right) \left(\mathbb{E}_Y\left[\norm{u_0}{\HHilbert}^2\right]+T\kappa_7(T,l)\right),
		\end{align}
		which, implies that almost surely there exists a $k$ such that $\sup_{t\leq T}\mathbb{E}_Y\left[\norm{\bar{u}^k_t}{\HHilbert}^2\right]\leq k$. By the Lipschitz continuity of $\observ$ and the inequality $\left|\mathbb{E}_Y\left[\observ(\bar{u}^k_t)\right]\right|
		\leq |\observ(0)|+\mathrm{Lip}(\observ)
		\sqrt{\mathbb{E}_Y\left[\norm{\bar{u}^k_t}{\HHilbert}^2\right]}$ this then in turn implies that
		\begin{align*}
			\bigcup_{k\in\N}\left\{\tau^k>T\right\}\cap\left\{\tau^l_{\mathrm{ref}}>T\right\}
			=
			\left\{\tau^l_{\mathrm{ref}}>T\right\}~\text{almost surely.}
		\end{align*}
		
		Since $\sup_{t\leq T}\mathbb{E}_Y\left[\left|\observ(u^{\mathrm{ref}}_t)\right|^2\right]$ is finite almost surely, we can thus indeed conclude that there exists a solution to the EnKBF \eqref{nonlinear EnKBF}, defined on every event $\left\{\tau^k>T\right\}\cap\left\{\tau^l_{\mathrm{ref}}>T\right\}$ by the sequence of $\bar{u}^k$.\newline
		
		Uniqueness follows from uniqueness of the stopped dynamics \eqref{partially stopped mf EnKBF}.
		
	\end{proof}
	
	\begin{Rmk}
		Note that even though in the proof above we used the specific form of the observations $Y$, it actually does  not  matter that the true observation function and the modelled observation function coincide, i.e. if $\mathrm{d}Y_t=\mathfrak{C}(\bar{X}_t)\mathrm{d}t+\Gamma_t\mathrm{d}V_t$ with $\mathfrak{C}\neq \observ$, then the proof would still hold, as long as $\mathfrak{C}$ is assumed to be Lipschitz. Therefore, as an immediate corollary of our chosen fixed point argument, one derives the continuity of the EnKBF with respect to perturbations of the modelled observations $H$. The continuous dependence on the signal parameters $\Adrift$, $\Bvol$ and the initial condition $u_0$ can be shown as well. Only the robustness with respect to the observation stream $Y$ is a delicate matter due to the discontinuity of the Itô--Lyons map.
	\end{Rmk}

	\section{Quantitative propagation of chaos}\label{section prop of chaos}
	
	Next we show propagation of chaos, i.e. that the system of interacting SPDEs \eqref{nonlinear EnKBF - particle approximation} indeed converges (in an appropriate sense) to the McKean--Vlasov SPDE \eqref{nonlinear EnKBF}. For this we use a standard synchronous coupling approach, i.e. we compare \eqref{nonlinear EnKBF - particle approximation}  to a tensorized version of \eqref{nonlinear EnKBF} defined on the same probability space. 
	To this end we define conditionally\footnote{Conditioned on $Y$.} independent copies $\bar{u}^i,~i\in\N$ of the mean field process \eqref{nonlinear EnKBF} to be the solutions of
	\begin{align*}
		\mathrm{d}\bar{u}^i_t
		&=\Adrift(\bar{u}^i_t)\mathrm{d}t+\Bvol(\bar{u}^i_t)\mathrm{d}\bar{W}^i_t
		\\&\phantom{=}+
		\mathbb{Cov}_Y\left[\bar{u}^i_t,\observ(\bar{u}^i_t)\right] R^{-1}_t\left(\mathrm{d}Y_t-\frac{\observ(\bar{u}^i_t)+\mathbb{E}_Y\left[\observ(\bar{u}^i_t)\right]}{2}
		\mathrm{d}t
		\right),~i=1,\cdots,N,
	\end{align*}
	where $\bar{W}^i,~i\in\N$ are the same Wiener processes that also drive the particle system \eqref{nonlinear EnKBF - particle approximation}.  Furthermore
	we set $\bar{\mathfrak{u}}^N:=\left(\bar{u}^1,\cdots,\bar{u}^N\right)\in \HHilbert^{N}$ and make the following definition.
	
	\begin{Def}\label{Definition stopping time}
		We define the empirical observed accuracy
		\begin{align*}
			\mathcal{R}^N_\observ(\mathfrak{u}^N_s)
			:=
			\frac{1}{N}\sum_{i=1}^{N}\norm{\observ(u^{\mathrm{ref}}_{s})
				-
				\frac{
					\observ(u^i_s)+\mathbb{E}^N_\observ\left[u_s\right]
				}{2}}{\HHilbert}^2.
		\end{align*}
		
		We also define the corresponding hitting times for any $k\in\N$
		\begin{align*}
			\tau^{k}_{\sigma}
			&:=
			\inf\left\{~t\geq0~:~\sigma^{N}[\mathfrak{u}^N_t]>k~\right\}
			,~
			\tau^{k}_{\bar{\sigma}}:=
			\inf\left\{~t\geq0~:~\sigma^{N,\observ}[\bar{\mathfrak{u}}^N_t]>k~\right\}
			,\\
			\tau^{k}_{\mathcal{R}}
			&:=
			\inf\left\{~t\geq0~:~\mathcal{R}^N_{\observ}(\mathfrak{u}^N_t)>k~\right\}.
		\end{align*}
	\end{Def}

	\begin{Def}\label{Definition LLN}
		Furthermore we define the error of the law of large numbers by
		\begin{align*}
			\mathrm{LLN}^N_\observ(T)
			:=
			\int_{0}^{T}
			\norm{\mathbb{C}^{N}_{\observ}\left[\bar{\mathfrak{u}}^N_s\right]
				-
				\mathbb{Cov}_Y\left[\bar{u}_s,\observ(\bar{u}_s)\right]}{\HHilbert^{d_y} }^2
			+
			\left|\mathbb{E}^N_{\observ}\left[\bar{\mathfrak{u}}^N_s\right]-\mathbb{E}_Y\left[H(\bar{u}_s)\right]\right|^2
			\mathrm{d}s
			.
		\end{align*}
	\end{Def}
	
	Now we are able prove convergence of the particle system with implicit rates.\newline
	
	\begin{Thm}\label{prop of chaos - implicit rates}
		Assume that the conditions in Assumption \ref{signal assumptions} and \ref{bounded signal diffusion} are satisfied, and let $\tau^k:=\min\left\{\tau^{k}_{\sigma},\tau^{k}_{\bar{\sigma}},\tau^{k}_{\mathcal{R}}\right\}$. Then for any $p\in(0,1)$ there exists a constant $\kappa(T,k,p)$, such that
		\begin{align*}
			&\mathbb{E}\left[
			\sup_{t\leq \min\{T,\tau^k\}}
			\left(\frac{1}{N}\sum_{i=1}^{N}\norm{r^i_{\min\{t,\tau^k\}}}{\HHilbert}^2\right)^p
			\right]
			\leq
			\kappa(T,k,p)~
			\mathbb{E}\left[
			\left(
			\mathrm{LLN}^N_{\observ}(\min\{T,\tau^k\})
			\right)^p
			\right]
		\end{align*}
		
	\end{Thm}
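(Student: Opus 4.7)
The plan is to implement a synchronous coupling on the stopped interval $[0,\min\{T,\tau^k\}]$, exploiting the fact that on this interval all of $\sigma^{N}[\mathfrak{u}^N]$, $\sigma^{N,\observ}[\bar{\mathfrak{u}}^N]$ and $\mathcal{R}^N_\observ(\mathfrak{u}^N)$ are a priori bounded by $k$. Concretely, I would set $r^i_t := u^i_t - \bar u^i_t$, subtract the tensorized mean field equation driving $\bar u^i$ from \eqref{nonlinear EnKBF - particle approximation}, and apply Itô's formula to $\norm{r^i_t}{\HHilbert}^2$. Averaging over $i$ and using the weak monotonicity \eqref{one sided Lipschitz} to bundle the drift $\prescript{}{\VHilbert'}{\langle\Adrift(u^i)-\Adrift(\bar u^i),r^i\rangle_{\VHilbert}}$ with the Itô correction of $(\Bvol(u^i)-\Bvol(\bar u^i))\mathrm{d}\bar W^i$ produces an inequality of the schematic form
\begin{align*}
\mathrm{d}\Bigl(\tfrac{1}{N}\sum_{i=1}^{N}\norm{r^i_t}{\HHilbert}^2\Bigr)
\leq \oneSidedLip\,\tfrac{1}{N}\sum_{i=1}^{N}\norm{r^i_t}{\HHilbert}^2\,\mathrm{d}t + \mathcal{E}_t\,\mathrm{d}t + \mathrm{d}\mathfrak{M}_t,
\end{align*}
where $\mathfrak{M}$ is a real-valued local martingale and $\mathcal{E}_t$ collects every discrepancy between the particle gain $\mathbb{C}^N_\observ[\mathfrak{u}^N_t]$ and the mean field gain $\mathbb{Cov}_Y[\bar u_t,\observ(\bar u_t)]$, together with the innovation differences.

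The decisive algebraic step is the double decomposition
\begin{align*}
\mathbb{C}^N_\observ[\mathfrak{u}^N_t] - \mathbb{Cov}_Y[\bar u_t,\observ(\bar u_t)]
= \bigl(\mathbb{C}^N_\observ[\mathfrak{u}^N_t] - \mathbb{C}^N_\observ[\bar{\mathfrak{u}}^N_t]\bigr) + \bigl(\mathbb{C}^N_\observ[\bar{\mathfrak{u}}^N_t] - \mathbb{Cov}_Y[\bar u_t,\observ(\bar u_t)]\bigr),
\end{align*}
together with the corresponding splitting of $\mathbb{E}^N_\observ[\mathfrak{u}^N_t] - \mathbb{E}_Y[\observ(\bar u_t)]$. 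The first (coupling) summand expands into cross-terms that Cauchy--Schwarz and Lipschitz continuity of $\observ$ bound by a constant multiple of $\sqrt{\tfrac{1}{N}\sum_i\norm{r^i_t}{\HHilbert}^2}\bigl(\sqrt{\sigma^N[\mathfrak{u}^N_t]}+\sqrt{\sigma^{N,\observ}[\bar{\mathfrak{u}}^N_t]}\bigr)$, which on $[0,\tau^k]$ is dominated by $\sqrt{k}\,\sqrt{\tfrac{1}{N}\sum_i\norm{r^i_t}{\HHilbert}^2}$. The second (sampling) summand is precisely the integrand of $\mathrm{LLN}^N_\observ$. Where these differences multiply $\mathrm{d}Y_t$, I would split $\mathrm{d}Y_t = \observ(u^{\mathrm{ref}}_t)\mathrm{d}t + \Gamma_t\mathrm{d}V_t$: the drift contribution is absorbed using the very definition of $\mathcal{R}^N_\observ$ together with $\mathcal{R}^N_\observ(\mathfrak{u}^N_t)\leq k$, while the $\Gamma_t\mathrm{d}V_t$ part is folded into the local martingale $\mathfrak{M}_t$.

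Once the inequality is reduced to the form $\mathrm{d}\zeta_t \leq \kappa(T,k)\,\zeta_t\,\mathrm{d}t + \kappa(T,k)\,\mathrm{d}\ell_t + \mathrm{d}\mathfrak{M}_t$ for $\zeta_t := \tfrac{1}{N}\sum_i\norm{r^i_t}{\HHilbert}^2$ and $\ell$ the integrand of $\mathrm{LLN}^N_\observ$, the final step is to invoke Scheutzow's stochastic Grönwall lemma \cite{Scheutzow}, whose conclusion naturally delivers $\mathbb{E}[\sup_{t\leq\min\{T,\tau^k\}}\zeta_t^p]$ controlled by $\mathbb{E}[(\mathrm{LLN}^N_\observ(\min\{T,\tau^k\}))^p]$ for any $p\in(0,1)$; the restriction $p<1$ is intrinsic to this tool and explains the exponent in the statement. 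The main obstacle I foresee is the bookkeeping around the quadratic variation $\mathrm{d}\quadV{\mathfrak{M}}_t$, which must be dominated on the stopped interval by a bounded multiple of $\zeta_t\mathrm{d}t$ plus the $\mathrm{LLN}^N_\observ$ integrand; this relies crucially on Assumption \ref{bounded signal diffusion} and on all three stopping times in $\tau^k$ acting simultaneously, since the gain difference enters both the drift and the diffusion contributions in a manner whose estimates are only tight when $\sigma^N$, $\sigma^{N,\observ}$ and $\mathcal{R}^N_\observ$ are kept bounded at once.
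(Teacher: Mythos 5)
Your proposal follows essentially the same route as the paper's proof: synchronous coupling, Itô's formula for $\norm{r^i_t}{\HHilbert}^2$ combined with the one-sided Lipschitz condition \eqref{one sided Lipschitz}, the identical splitting of the gain error into a coupling part (controlled through $\sigma^N[\mathfrak{u}^N]$, $\sigma^{N,\observ}[\bar{\mathfrak{u}}^N]$ and $\mathcal{R}^N_\observ(\mathfrak{u}^N)$, hence by $k$ up to $\tau^k$) and a sampling part yielding $\mathrm{LLN}^N_{\observ}$, and finally Scheutzow's stochastic Grönwall lemma to get the $p$-th moment bound for $p\in(0,1)$. The only deviation is your anticipated obstacle concerning $\quadV{\mathfrak{M}}$, which is not one: the stochastic Grönwall lemma invoked here needs no control of the local martingale's quadratic variation, which is exactly why the paper notes the concrete form of $\mathfrak{lm}$ is irrelevant to the remaining estimates.
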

	\begin{proof}
		We  note that since $u^i$ and $\bar{u}^i$ share the same initial conditions we have for any $t\geq 0,~i=1,\cdots,N$ that
		\begin{align*}
			r^i_t
			&=u^i_t-\bar{u}^i_t
			=
			\int_{0}^{t} \mathrm{d}\left(u^i_s-\bar{u}^i_s\right)
			\\&=
			\int_{0}^{t} \Adrift(u^i_s)-\Adrift(\bar{u}^i_s)~\mathrm{d}s
			+
			\int_{0}^{t} \Bvol(u^i_s)-\Bvol(\bar{u}^i_s)~\mathrm{d}\bar{W}^i_s
			\\&\phantom{=}+
			\int_{0}^{t} 
			\left(
			\mathbb{C}^{N}_{\observ}\left[\mathfrak{u}^N_s\right]
			-
			\mathbb{Cov}_Y\left[\bar{u}^i_s,\observ(\bar{u}^i_s)\right]
			\right) R^{-1}_s\left(\mathrm{d}Y_t-\frac{\observ(u^i_s)+\mathbb{E}^N_{\observ}\left[\mathfrak{u}^N_s\right]}{2}
			\mathrm{d}s
			\right)
			\\&\phantom{=}-
			\frac{1}{2}
			\int_{0}^{t}
			\mathbb{Cov}_Y\left[\bar{u}^i_s,\observ(\bar{u}^i_s)\right]
			R^{-1}_s
			\left(
			\observ(u^i_s)-\observ(\bar{u}^i_s) 
			+
			\mathbb{E}^N_{\observ}[\mathfrak{u}^N_s]-\mathbb{E}_Y\left[\observ(\bar{u}^i_s)\right] 
			\right)
			\mathrm{d}s.
		\end{align*}
		
		Therefore by using the concrete form of the observation process $\mathrm{d}Y_t=\observ(u^{\mathrm{ref}}_t)\mathrm{d}t+\Gamma_t\mathrm{d}V_t$ we derive from Itô's Lemma
		\begin{align*}
			&\norm{r^i_t}{\HHilbert}^2
			=2\int_{0}^{t}
			\prescript{}{\VHilbert'}{\left\langle
				\Adrift(u^i_s)-\Adrift(\bar{u}^i_s), u^i_s-\bar{u}^i_s
				\right\rangle_{\VHilbert}}
			\mathrm{d}s
			+
			2
			\int_{0}^{t}
			\left\langle
			u^i_s-\bar{u}^i_s, 
			\left(\Bvol(u^i_s)-\Bvol(\bar{u}^i_s)\right)~\mathrm{d}\bar{W}^i_s
			\right\rangle_{\HHilbert}
			\\&\phantom{=}+
			\int_{0}^{t}
			\norm{(\Bvol(u^i_s)-\Bvol(\bar{u}^i_s))\circ\sqrt{\Qcor}}{\HilbSchm{\UHilbert}{\HHilbert}}^2\mathrm{d}s
			\\&\phantom{=}+
			2\int_{0}^{t}
			\left\langle
			u^i_s-\bar{u}^i_s, 
			\left(
			\mathbb{C}^{N}_{\observ}\left[\mathfrak{u}^N_s\right]
			-
			\mathbb{Cov}_Y\left[\bar{u}^i_s,\observ(\bar{u}^i_s)\right]
			\right)
			R^{-1}_s
			\Gamma_s
			\mathrm{d}V_s
			\right\rangle_{\HHilbert}
			\\&\phantom{=}+
			2\int_{0}^{t}
			\left\langle
			u^i_s-\bar{u}^i_s, 
			\left(
			\mathbb{C}^{N}_{\observ}\left[\mathfrak{u}^N_s\right]
			-
			\mathbb{Cov}_Y\left[\bar{u}^i_s,\observ(\bar{u}^i_s)\right]
			\right)
			R^{-1}_s
			\left(
			\observ(u^{\mathrm{ref}}_{s})
			-
			\frac{
				\observ(u^i_s)+\mathbb{E}^N_{\observ}\left[\mathfrak{u}^N_s\right]
			}{2}
			\right)
			\right\rangle_{\HHilbert}
			\mathrm{d}s
			\\&\phantom{=}+
			\int_{0}^{t}
			\left|\left\langle
			u^i_s-\bar{u}^i_s, 
			\left(
			\mathbb{C}^{N}_{\observ}\left[\mathfrak{u}^N_s\right]
			-
			\mathbb{Cov}_Y\left[\bar{u}^i_s,\observ(\bar{u}^i_s)\right]
			\right)\right\rangle_{\HHilbert}
			R^{-1/2}_s\right|^2
			\mathrm{d}s
			\\&\phantom{=}-
			\int_{0}^{t}
			\left\langle
			u^i_s-\bar{u}^i_s ,
			\mathbb{Cov}_Y\left[\bar{u}^i_s,\observ(\bar{u}^i_s)\right]
			R^{-1}_s
			\left(
			\observ(u^i_s)-\observ(\bar{u}^i_s) 
			+
			\mathbb{E}^N_\observ[\mathfrak{u}^N_s]-\mathbb{E}_Y\left[\observ(\bar{u}^i_s)\right] 
			\right)
			\right\rangle_{\HHilbert}
			\mathrm{d}s.
		\end{align*}
		
		Thus by forming the average and using the Lipschitz assumptions \eqref{one sided Lipschitz}, as well as elementary Cauchy--Schwarz inequalities, we derive that there exists a constant $\kappa_1(T)>0$, only depending on time, such that
		\small\begin{align}\label{Propagation of chaos proof - first inequality}
			\begin{split}
				&\frac{1}{N}\sum_{i=1}^{N}\norm{r^i_t}{\HHilbert}^2
				\leq
				\kappa_1(T)
				\int_{0}^{t}\frac{1}{N}\sum_{i=1}^{N}\norm{r^i_s}{\HHilbert}^2~\mathrm{d}s
				+\mathfrak{lm}_t
				\\&+	
				\int_{0}^{t} \frac{1}{N} \sum_{i=1}^{N} \norm{\mathbb{C}^{N}_{\observ}\left[\mathfrak{u}^N_s\right]
					-
					\mathbb{Cov}_Y\left[\bar{u}^i_s,\observ(\bar{u}^i_s)\right]}{\HHilbert^{d_y}}^2~
				\left(
				\norm{\observ(u^{\mathrm{ref}}_{s})
					-
					\frac{
						\observ(u^i_s)+\mathbb{E}^N_{\observ}\left[\mathfrak{u}^N_s\right]
					}{2}}{\HHilbert}^2
				+
				2|R^{-1}_s|
				\right)
				\mathrm{d}s
				\\&+	
				2\int_{0}^{t}
				\frac{1}{N} \sum_{i=1}^{N}
				\norm{\mathbb{Cov}_Y\left[\bar{u}^i_s,\observ(\bar{u}^i_s)\right]}{\HHilbert^{d_y}}^2 
				\left(\left|\observ(u^i_s)-\observ(\bar{u}^i_s)\right|^2
				+
				\left|\mathbb{E}^N_{\observ}[\mathfrak{u}^N_s]-\mathbb{E}_Y\left[\observ(\bar{u}^i_s)\right]\right|^2
				\right)
				\mathrm{d}s,
			\end{split}
		\end{align}
		where
		\begin{align*}
			\mathrm{d}\mathfrak{lm}_t
			&:=
			\frac{2}{N} \sum_{i=1}^{N}
			\left\langle
			u^i_s-\bar{u}^i_s, 
			\left(\Bvol(u^i_s)-\Bvol(\bar{u}^i_s)\right)~\mathrm{d}\bar{W}^i_s
			\right\rangle_{\HHilbert}
			\\&\phantom{=}	+
			\frac{2}{N} \sum_{i=1}^{N}
			\left\langle
			u^i_s-\bar{u}^i_s, 
			\left(
			\mathbb{C}^{N}_{\observ}\left[\mathfrak{u}^N_s\right]
			-
			\mathbb{Cov}_Y\left[\bar{u}^i_s,\observ(\bar{u}^i_s)\right]
			\right)
			R^{-1}_s
			\Gamma_s
			\mathrm{d}V_s
			\right\rangle_{\HHilbert}
		\end{align*}
		is a local martingale. Its concrete form will not matter to our further calculations as we intend to use the stochastic Grönwall Lemma \cite{Scheutzow}.\newline
		
		First we note that the (conditional) covariance operator $\mathbb{Cov}_Y\left[\bar{u}^i_s,\observ(\bar{u}^i_s)\right]=\mathbb{Cov}_Y\left[\bar{u}_s,\observ(\bar{u}_s)\right]$ is independent of $i=1,\cdots,N$ and that it can be uniformly bounded on any finite time interval $[0,T]$ due to the variance bound \eqref{variance bound tilde u}, which helps us ignore the quadratic covariation of $\mathfrak{lm}$ and is thus especially suited for the one-sided Lipschitz conditions we encounter.\newline
		
		Next we note that
		\begin{align*}
			\left|
			\mathbb{E}^N_{\observ}[\mathfrak{u}^N_s]-\mathbb{E}_Y\left[\observ(\bar{u}^i_s)\right]
			\right|^2
			\leq
			2\mathrm{Lip}(\observ)^2 \frac{1}{N}\sum_{i=1}^{N}\norm{r^i_s}{\HHilbert}^2
			+2\left|\mathbb{E}^N_{\observ}\left[\bar{\mathfrak{u}}^N_s\right]-\mathbb{E}_Y\left[\observ\left(\bar{u}_s\right)\right]\right|^2.
		\end{align*}
		
		Finally we note that
		\begin{align*}
			&\norm{\mathbb{C}^{N}_{\observ}\left[\mathfrak{u}^N_s\right]
				-
				\mathbb{Cov}_Y\left[\bar{u}^i_s,\observ(\bar{u}^i_s)\right]}{\HHilbert^{d_y}}
			\\&\leq
			\norm{\mathbb{C}^{N}_{\observ}\left[\mathfrak{u}^N_s\right]
				-
				\mathbb{C}^{N}_{\observ}\left[\bar{\mathfrak{u}}^N_s\right]}{\HHilbert^{d_y}}
			+
			\norm{\mathbb{C}^{N}_{\observ}\left[\bar{\mathfrak{u}}^N_s\right]
				-
				\mathbb{Cov}_Y\left[\bar{u}^i_s,\observ(\bar{u}^i_s)\right]}{\HHilbert^{d_y}}
			\\&\leq
			\norm{\frac{1}{N}\sum_{i=1}^{N}
				(u^i_s-\mathbb{E}^N[\mathfrak{u}^N_s])\left(\observ(u^i_s)-\observ(\bar{u}^i_s)\right)'
			}{\HHilbert^{d_y}}
			+
			\norm{\frac{1}{N}\sum_{i=1}^{N}
				(u^i_s-\bar{u}^i_s)\left(\observ(\bar{u}^i_s)-\mathbb{E}^N_{\observ}[\bar{\mathfrak{u}}^N_s]\right)'
			}{\HHilbert^{d_y}}
			\\&\phantom{=}+
			\norm{\mathbb{C}^{N}_{\observ}\left[\bar{\mathfrak{u}}^N_s\right]
				-
				\mathbb{Cov}_Y\left[\bar{u}^i_s,\observ(\bar{u}^i_s)\right]}{\HHilbert^{d_y}}
			\\&\leq
			\left(\sqrt{\frac{\mathrm{Lip}(\observ)}{N}\sum_{i=1}^{N}\norm{u^i_s-\mathbb{E}^N[\mathfrak{u}^N_s]}{\HHilbert}^2}
			+
			\sqrt{\frac{1}{N}\sum_{i=1}^{N}\left|\observ(\bar{u}^i_s)-\mathbb{E}^N_{\observ}[\bar{\mathfrak{u}}^N_s]\right|^2}
			\right)
			\sqrt{\frac{1}{N}\sum_{i=1}^{N}\norm{r^i_s}{\HHilbert}^2}
			\\&\phantom{=}+
			\norm{\mathbb{C}^{N}_{\observ}\left[\bar{\mathfrak{u}}^N_s\right]
				-
				\mathbb{Cov}_Y\left[\bar{u}^i_s,\observ(\bar{u}^i_s)\right]}{\HHilbert^{d_y}}
			.
		\end{align*}
		
		Using the notation of Definition \ref{Definition stopping time} this allows us to further estimate inequality \eqref{Propagation of chaos proof - first inequality}. Thus there exists a constant $\kappa_2(T)>0$ such that
		\begin{align}\label{Propagation of chaos proof - Gronwall inequality}
			\begin{split}
				\frac{1}{N}\sum_{i=1}^{N}\norm{r^i_t}{\HHilbert}^2
				&\leq
				\kappa_2(T)
				\int_{0}^{t}
				\left(
				1+\left(\sigma^N[\mathfrak{u}^N_s]+\sigma^{N,\observ}[\bar{\mathfrak{u}}^N_s]\right)
				\mathcal{R}^N_\observ(\mathfrak{u}^N_s)
				\right)
				\frac{1}{N}\sum_{i=1}^{N}\norm{r^i_s}{\HHilbert}^2~\mathrm{d}s
				\\&\phantom{=}+
				\kappa_2(T)\int_{0}^{t}
				\mathcal{R}^N_{\observ}(\mathfrak{u}^N_s)
				\norm{\mathbb{C}^{N}_{\observ}\left[\bar{\mathfrak{u}}^N_s\right]
					-
					\mathbb{Cov}_Y\left[\bar{u}^i_s,\observ(\bar{u}^i_s)\right]}{\HHilbert^{d_y}}^2\mathrm{d}s
				\\&\phantom{=}+
				\kappa_2(T)\int_{0}^{t}
				\left|\mathbb{E}^N_{\observ}\left[\bar{\mathfrak{u}}^N_s\right]-\mathbb{E}_Y\left[\observ(\bar{u}_s)\right]\right|^2
				\mathrm{d}s
				+\mathfrak{lm}_t
				.
			\end{split}
		\end{align}
		
		Using the stopping time $\tau^k$ we thus derive that there exists a constant $\kappa_3(T,k)$ only depending on timeframe $T$ and the stopping level $k$ such that
		\begin{align*}
			\frac{1}{N}\sum_{i=1}^{N}\norm{r^i_{\min\{t,\tau^k\}}}{\HHilbert}^2
			&\leq
			\kappa_3(T,k)
			\int_{0}^{\min\{t,\tau^k\}}
			\frac{1}{N}\sum_{i=1}^{N}\norm{r^i_s}{\HHilbert}^2
			\mathrm{d}s
			\\&\phantom{=}+
			\kappa_3(T,k)~
			\mathrm{LLN}^N_{\observ}(\min\{t,\tau^k\})
			+
			\mathfrak{lm}_{\min\{t,\tau^k\}}
			.
		\end{align*}
		
		Then by the stochastic Grönwall Lemma \cite[Theorem 4, equation (4)]{Scheutzow}, the claim of this lemma follows immediately.
		
	\end{proof}
	
	From this theorem one can also deduce the convergence in probability \cite{StannatLange}.\newline

	We say the convergence rates derived in Theorem \ref{prop of chaos - implicit rates} are implicit, as they require the processes to be stopped and the stopping times depend on the converging particle system itself. However on the stopped time intervals the rates are optimal in the sense that they correspond to the rates of convergence given by the law of large numbers. This is certainly far from the convergence result one would ultimately desire, but, for general signals and observation functions, nevertheless seems to be the current state of the art, even in the finite dimensional setting, where the same coupling method was used by \cite{StannatLange} to obtain similar results for Lipschitz signals and linear observation functions. For bounded signal and observation functions, as well as observation data $Y$ that is given by a Lipschitz continuous (w.r.t. time) rough path\footnote{Thus excluding Brownian observation noise that we treat here.}, \cite{Coghi et al} were able to prove explicit convergence rates. They  used a similar stopping argument as above, together with tail bounds for higher order empirical moments of the interacting ensemble. With this they were able to derive a logarithmic decay $\mathcal{O}\left(\log(N)^{-1}\right)$ of the error w.r.t. ensemble size $N$ without stopping, which is still far from the desired convergence rates corresponding to the law of large numbers.\newline
	
	Also  assuming the boundedness of the observation function $\observ$, we are able to prove the asymptotically, (almost) optimal convergence rate based on our exponential moment bounds in Proposition \ref{Lemma exponential moments} and the following additional assumption on the initial distribution.\newline
	
	\begin{Ass}\label{assumption exponential moments initial}
		We assume that for any $q>0$ there exists an $N_0(q)\in\N$ such that
		\begin{align*}
			\sup_{N\geq N_0(q)}
			\mathbb{E}\left[\exp\left(q ~\sigma^N[\mathfrak{u}^N_0]\right)\right]
			<+\infty.
		\end{align*}
	\end{Ass} 
	
	\begin{Rmk}
		This  assumption is always satisfied for deterministic initial conditions, as for $u_0\sim\delta_{v_0}$ for some $v_0\in \HHilbert$ one has $\sigma^N[\mathfrak{u}^N_0]=0$ for all $N\in\N$. For Gaussian initial conditions this relates to the domain of the moment generating function of $\chi^2$-distributions and for general random variables to large deviations of the empirical covariance matrix.
	\end{Rmk}
	
	We will not investigate further when this assumption is satisfied and just assume it holds. Then we are able to prove the following theorem.\newline
	
	\begin{Thm}\label{Thm optimal convergence}
		Besides the conditions in Assumption \ref{signal assumptions} and \ref{bounded signal diffusion}, we assume that $\observ$ is bounded and that Assumption \ref{assumption exponential moments initial} hold. Then for any $T<+\infty$, $p\in(0,1)$ and any $\nu\in(1,1/p)$ there exists an $\mathcal{N}_0\left(T,p,\nu\right)\in\N$ and a $\kappa\left(T,p,\nu\right)<+\infty$, such that for all $N\geq\mathcal{N}_0\left(T,p,\nu\right)$ we have
		\begin{align}\label{optimal rates before Hoelder}
			&\mathbb{E}\left[\sup_{t\leq T} \left(\frac{1}{N}\sum_{i=1}^{N}\norm{r^i_t}{\HHilbert}^2\right)^p\right]
			\leq
			\kappa\left(T,p,\nu\right)
			\mathbb{E}\left[\left(\mathrm{LLN}^N_{\observ}(T)\right)^{p\nu} \right]^{1/\nu}.
		\end{align}
		As a consequence we have for $\kappa(T,p):=\inf_{\nu\in(1,1/p)}\kappa(T,p,\nu)$
		\begin{align}\label{optimal rates - almost}
			\mathbb{E}\left[\sup_{t\leq T} \left(\frac{1}{N}\sum_{i=1}^{N}\norm{r^i_t}{\HHilbert}^2\right)^p\right]
			\leq
			\kappa\left(T,p\right)
			\mathbb{E}\left[\mathrm{LLN}^N_{\observ}(T) \right]^{p},
		\end{align}
		which in turn implies that for some constant $C\left(T,p,\norm{\observ}{\infty}\right)>0$ that
		\begin{align}\label{optimal rates - in N}
			\mathbb{E}\left[\sup_{t\leq T} \left(\frac{1}{N}\sum_{i=1}^{N}\norm{r^i_t}{\HHilbert}^2\right)^p\right]
			\leq
			C\left(T,p,\norm{\observ}{\infty}\right)
			N^{-p}.
		\end{align}
	\end{Thm}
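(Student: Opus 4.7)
The plan is to remove the stopping times appearing in Theorem \ref{prop of chaos - implicit rates} by exploiting the boundedness of $\observ$ together with the exponential moment bounds of Proposition \ref{Lemma exponential moments}. Under the assumption $\observ \in L^\infty$, the quantities $\sigma^{N,\observ}[\bar{\mathfrak{u}}^N_s]$, $\mathcal{R}^N_\observ(\mathfrak{u}^N_s)$ and $\norm{\mathbb{Cov}_Y[\bar{u}_s, \observ(\bar{u}_s)]}{\HHilbert^{d_y}}$ are all uniformly bounded by a constant depending only on $T$ and $\norm{\observ}{\infty}$. Plugging these bounds into the unstopped inequality \eqref{Propagation of chaos proof - Gronwall inequality} yields, with $X_t := \frac{1}{N}\sum_{i=1}^N \norm{r^i_t}{\HHilbert}^2$, the Grönwall-type inequality
\begin{align*}
X_t \leq \int_0^t \kappa_T\bigl(1 + \sigma^N[\mathfrak{u}^N_s]\bigr) X_s\, \mathrm{d}s + \kappa_T\, \mathrm{LLN}^N_\observ(t) + \mathfrak{lm}_t,
\end{align*}
where $\kappa_T$ depends on $T$ and $\norm{\observ}{\infty}$, $\mathfrak{lm}$ is a local martingale from the proof of Theorem \ref{prop of chaos - implicit rates}, and $\mathrm{LLN}^N_\observ(\cdot)$ is nondecreasing in time.

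I then apply the stochastic Grönwall lemma of \cite{Scheutzow} with the random predictable, nondecreasing continuous integrator $A_t := \int_0^t \kappa_T(1 + \sigma^N[\mathfrak{u}^N_s])\, \mathrm{d}s$. For any $p \in (0, 1)$ and any $\nu \in (1, 1/p)$ with conjugate exponent $\nu' = \nu/(\nu-1)$ this produces an estimate of the form
\begin{align*}
\E\left[\sup_{t \leq T} X_t^p\right]^{1/p} \leq c_{p,\nu}\, \E\left[\exp(q_{p,\nu} A_T)\right]^{1/(p\nu')} \E\left[\mathrm{LLN}^N_\observ(T)^{p\nu}\right]^{1/(p\nu)},
\end{align*}
with explicit constants $c_{p,\nu}, q_{p,\nu} > 0$. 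Jensen's inequality applied to $\exp$ bounds the first factor by $e^{q_{p,\nu} \kappa_T T}\, \sup_{s \leq T} \exp\bigl(q_{p,\nu} \kappa_T T\, \sigma^N[\mathfrak{u}^N_s]\bigr)$, and Proposition \ref{Lemma exponential moments} combined with Assumption \ref{assumption exponential moments initial} gives a uniform-in-$N$ bound on its expectation provided $N \geq \mathcal{N}_0(T, p, \nu)$ exceeds the threshold $2\boundB\, q_{p,\nu} \kappa_T T\, e^{(2\oneSidedLip + 1)T}$ dictated by that proposition. Raising to the $p$-th power and absorbing everything into a constant yields \eqref{optimal rates before Hoelder}.

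The two remaining statements are essentially corollaries. For \eqref{optimal rates - almost}, I observe that $p\nu < 1$ makes $x \mapsto x^{p\nu}$ concave on $\R_+$, so Jensen gives $\E[\mathrm{LLN}^N_\observ(T)^{p\nu}]^{1/\nu} \leq \E[\mathrm{LLN}^N_\observ(T)]^p$, and taking the infimum over $\nu \in (1, 1/p)$ yields $\kappa(T, p)$. For \eqref{optimal rates - in N}, it remains to bound $\E[\mathrm{LLN}^N_\observ(T)] \leq C(T, \norm{\observ}{\infty})/N$ via the standard $L^2$ law of large numbers: conditionally on the observation filtration, the tensorized particles $(\bar{u}^i)_{i=1}^N$ are i.i.d.\ copies of $\bar{u}_s$, and the boundedness of $\observ$ together with the robust variance bound \eqref{variance bound tilde u} for $\bar{u}$ yields the $O(1/N)$ rate for both the empirical mean and each component of the empirical covariance, after integration in time.

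The main obstacle I anticipate is the application of the stochastic Grönwall lemma with a genuinely random and unbounded coefficient $\phi_s = \kappa_T(1 + \sigma^N[\mathfrak{u}^N_s])$ while simultaneously absorbing the local martingale $\mathfrak{lm}$; this is exactly where the refined form of \cite{Scheutzow} producing the $p\nu$-power on the right-hand side is indispensable. The interplay between the moment threshold of Proposition \ref{Lemma exponential moments} and the choice of $\nu$ is also subtle, since the threshold forces $N_0$ to grow with $q_{p,\nu}$, so one cannot push $\nu \to 1/p$ to eliminate the loss from Hölder; this is precisely why the final rate is $N^{-p}$ rather than the $N^{-1}$ one would hope for from a pure law of large numbers.
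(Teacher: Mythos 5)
Your proposal is correct and follows essentially the same route as the paper: drop the stopping times using the boundedness of $\observ$ in the unstopped Gr\"onwall inequality \eqref{Propagation of chaos proof - Gronwall inequality}, apply Scheutzow's stochastic Gr\"onwall lemma with exponents $p$, $\nu$, $\mu=\nu/(\nu-1)$, control the resulting exponential factor via Jensen, Proposition \ref{Lemma exponential moments} and Assumption \ref{assumption exponential moments initial} for $N$ above the same threshold, deduce \eqref{optimal rates - almost} by concavity of $x\mapsto x^{p\nu}$, and obtain \eqref{optimal rates - in N} from the conditional law of large numbers. The only minor difference is in the covariance part of $\mathrm{LLN}^N_{\observ}$, where the paper invokes the second absolute moment bound \eqref{bound 2nd absolute moment} of $\bar{u}$ rather than only the conditional variance bound \eqref{variance bound tilde u}; your sketch would need either that bound or a centering of the particles by $\mathbb{E}_Y[\bar{u}_s]$ to close this step, which is a routine detail.
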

	\begin{proof}
		First we note that inequality \eqref{Propagation of chaos proof - Gronwall inequality}, which was derived in the proof of Theorem \ref{prop of chaos - implicit rates}, can be further simplified when the observation function $\observ$ is assumed to be bounded, as then both $\sigma^{N,\observ}[\bar{\mathfrak{u}}^N_s]$ and  $\mathcal{R}^N_{\observ}(\mathfrak{u}^N_s)$ are uniformly bounded and thus there exists a constant $\kappa_4(T)$, such that
		\begin{align*}
			\begin{split}
				\frac{1}{N}\sum_{i=1}^{N}\norm{r^i_t}{\HHilbert}^2
				&\leq
				\kappa_4(T)
				\int_{0}^{t}
				\left(
				1+\sigma^N[\mathfrak{u}^N_s]
				\right)
				\frac{1}{N}\sum_{i=1}^{N}\norm{r^i_s}{\HHilbert}^2~\mathrm{d}s
				+
				\kappa_4(T)~\mathrm{LLN}^N_\observ(t)
				+\mathfrak{lm}_t
				.
			\end{split}
		\end{align*}
		
		The stochastic Grönwall inequality \cite{Scheutzow} thus tells us that for any $p\in(0,1)$ and  $\mu,\nu>1$ with $\frac{1}{\mu}+\frac{1}{\nu}=1$ and such that $p\nu<1$ we have
		\begin{align}
			\begin{split}
				&\mathbb{E}\left[\sup_{t\leq T} \left(\frac{1}{N}\sum_{i=1}^{N}\norm{r^i_t}{\HHilbert}^2\right)^p\right]
				\\&\leq
				\left( c_{p\nu}+1 \right)^{1/\nu}
				\mathbb{E}\left[
				\exp\left(p\mu~\kappa_4(T)
				\int_{0}^{T}
				\left(
				1+\sigma^N[\mathfrak{u}^N_s]
				\right)
				\mathrm{d}s
				\right)
				\right]
				~\kappa_4(T)~
				\mathbb{E}\left[	\left(\mathrm{LLN}^N_{\observ}(t)\right)^{p\nu} \right]^{1/\nu}
				,
			\end{split}
		\end{align}
		where 
		\begin{align*}
			c_{p\nu}
			:=
			\min\left\{4,1/(p\nu)\right\}~\frac{\pi p \nu}{\sin(\pi p \nu)}
			\xrightarrow{\nu\to 1/p}+\infty
			.
		\end{align*}
		
		Now we first note that $\mu=\frac{\nu}{\nu-1}$. Due to the exponential moment bounds in Proposition \ref{Lemma exponential moments} we have that for $q:=\frac{p\nu}{\nu-1}\kappa_4(T)T$
		\begin{align*}
			\mathbb{E}\left[
			\exp\left(p\mu\kappa_4(T)\int_{0}^{T}~\sigma^{N}[\mathfrak{u}^N_t]\right)\right]
			\leq
			(\pi+1) 
			\exp\left(\frac{q\left(e^{(2\oneSidedLip+1)T}-1\right)}{2(2\oneSidedLip+1)}\right)
			\mathbb{E}\left[\exp\left(2 q e^{(2\oneSidedLip+1)T} \sigma^N[\mathfrak{u}^N_0]\right)\right]
		\end{align*}
		and by Assumption \ref{assumption exponential moments initial} there exists an 
		\begin{align*}
			\mathcal{N}_0(T,p,\nu)
			:=N_0(q)
			=N_0\left(2 q e^{(2 \oneSidedLip+1)T)}\right)
			=N_0\left(2 \frac{p\nu}{\nu-1}\kappa_4(T)T e^{(2\oneSidedLip+1)T)}\right)
		\end{align*}
		such that 
		\begin{align*}
			\kappa_5(T,p\mu)
			:=
			\sup_{N\geq \mathcal{N}_0(T,p,\nu)}
			\mathbb{E}\left[
			\exp\left(2 \frac{p\nu}{\nu-1}\kappa_4(T)T e^{(2\oneSidedLip+1)T} \sigma^N[\mathfrak{u}^N_0]\right)
			\right]<+\infty
		\end{align*}
		and therefore for any $N\geq\mathcal{N}_0\left(T,p,\nu\right)$ we have
		\begin{align*}
			&\mathbb{E}\left[\sup_{t\leq T} \left(\frac{1}{N}\sum_{i=1}^{N}\norm{r^i_t}{\HHilbert}^2\right)^p\right]
			\leq
			\underbrace{\left( c_{p\nu}+1 \right)^{1/\nu}
				e^{\left(\frac{p\nu}{\nu-1} \kappa_4(T)T\right)}~
				\kappa_5\left(T,\frac{p\nu}{\nu-1}\right)
				\kappa_4(T)}_{:=\kappa_5\left(T,p,\nu\right)}
			\mathbb{E}\left[	\left(\mathrm{LLN}^N_{\observ}(t)\right)^{p\nu} \right]^{1/\nu},
		\end{align*}
		which proves our first claim. From this one can directly deduce \eqref{optimal rates - almost} via the Hölder inequality and using the fact that for $\nu\to 1$ both $\mathcal{N}(T,p,\nu)$ and $\kappa(T,p,\nu)$ blow up, i.e.
		\begin{align*}
			\mathcal{N}(T,p,\nu),\kappa(T,p,\nu)\xrightarrow{\nu\to 1}+\infty,
		\end{align*}
		as well  the blow up $\kappa(T,p,\nu)\xrightarrow{\nu\to 1/p}+\infty$, which in turn implies that the minimizer of $\kappa(T,p,\nu)$ for every fixed $T$ and $P$ must lie inside the interval $(1,1/p)$ and therefore \eqref{optimal rates - almost} is proven. Finally we are left to show \eqref{optimal rates - in N}. By Definition \ref{Definition LLN} the term $\mathrm{LLN}^N_\observ(T)$ consists of an error of the empirical mean and an error of the empirical covariance. First we estimate the  error of the empirical mean. Using the conditional independence of $\left(\bar{u}^i\right)_{i=1,\cdots,N}$ and the law of total variance we derive
		\begin{align}\label{proof convergence order in N - inequality 1}
			\begin{split}
				&\int_{0}^{T}
				\mathbb{E}\left[
				\left|\mathbb{E}^N_{\observ}\left[\bar{\mathfrak{u}}^N_s\right]-\mathbb{E}_Y\left[H(\bar{u}_s)\right]\right|^2
				\right]
				\mathrm{d}s
				=
				\mathbb{E}\left[\int_{0}^{T}
				\mathbb{E}_{Y}\left[
				\left|\frac{1}{N}\sum_{i=1}^{N}\observ\left(\bar{u}^i_s\right)-\mathbb{E}_Y\left[H(\bar{u}_s)\right]\right|^2
				\right]
				\mathrm{d}s\right]
				\\
				&=
				\mathbb{E}\left[\int_{0}^{T}
				\frac{\mathbb{Var}_{Y}\left[\observ\left(\bar{u}_s\right)\right]}{N}
				\mathrm{d}s\right]
				\leq
				\int_{0}^{T}
				\frac{\mathrm{Lip}\left(\observ\right)^2\boundB e^{\oneSidedLip s}}{N}
				\mathrm{d}s
				=\frac{\mathrm{Lip}\left(\observ\right)^2\boundB\left(e^{\oneSidedLip T}-1\right)}{\oneSidedLip~N}.
			\end{split}
		\end{align}
		
		Next we aim to dominate the error of the covariance. To this end we first note that
		\begin{align*}
			&\mathbb{E}_{Y}\left[	
			\norm{\mathbb{C}^{N}_{\observ}\left[\bar{\mathfrak{u}}^N_s\right]
				-
				\mathbb{Cov}_Y\left[\bar{u}_s,\observ(\bar{u}_s)\right]}{\HHilbert^{d_y} }^2	
			\right]
			\\&\leq
			2
			\mathbb{E}_{Y}\left[	
			\norm{
				\frac{1}{N}\sum_{i=1}^{N}\bar{u}^i_s \observ\left(\bar{u}^i_s\right)'
				-
				\mathbb{E}_{Y}\left[\bar{u}_s\observ\left(\bar{u}_s\right)\right]
			}{\HHilbert^{d_y} }^2	
			\right]
			\\&\phantom{\leq}+
			2
			\mathbb{E}_{Y}\left[	
			\norm{\mathbb{E}^N\left[\bar{\mathfrak{u}}^N_s\right] \mathbb{E}^N_{\observ}\left[\bar{\mathfrak{u}}^N_s\right]
				-
				\mathbb{E}_Y\left[\bar{u}_s\right] \mathbb{E}_Y\left[\observ\left(\bar{u}_s\right)\right]'
			}{\HHilbert^{d_y} }^2	
			\right].
		\end{align*}
		
		Again we use the conditional independence of  $\left(\bar{u}^i\right)_{i=1,\cdots,N}$ to deduce
		\begin{align*}
			&\mathbb{E}\left[
			\norm{
				\frac{1}{N}\sum_{i=1}^{N}\bar{u}^i_s \observ\left(\bar{u}^i_s\right)'
				-
				\mathbb{E}_{Y}\left[\bar{u}_s\observ\left(\bar{u}_s\right)'\right]
			}{\HHilbert^{d_y} }^2	
			\right]
			=
			\frac{\mathbb{E}\left[
				\mathbb{E}_{Y}
				\left[
				\norm{\bar{u}_s\observ\left(\bar{u}_s\right)'
					-
					\mathbb{E}_{Y}\left[\bar{u}_s\observ\left(\bar{u}_s\right)'\right]
				}{\HHilbert^{d_y}}^2
				\right]\right]}{N}
			\\&\leq
			\frac{\mathbb{E}\left[
				\mathbb{E}_{Y}
				\left[
				\norm{\bar{u}_s\observ\left(\bar{u}_s\right)'
				}{\HHilbert^{d_y}}^2
				\right]\right]}{N}
			\leq
			\frac{\norm{\observ}{\infty}\mathbb{E}\left[
				\norm{\bar{u}_s
				}{\HHilbert}^2
				\right]}{N}
		\end{align*}
		
		By using the bound \eqref{bound 2nd absolute moment} with $k,l\geq \norm{H}{\infty}$  that was derived in the proof of Theorem \ref{well posedness mf EnKBF} for the second absolute moments of $\bar{u}$, we can show that there exists a constant $C_1\left(T,\norm{\observ}{\infty}\right)>0$ such that
		\begin{align}\label{proof convergence order in N - inequality 2}
			&
			\int_{0}^{T}\mathbb{E}\left[
			\norm{
				\frac{1}{N}\sum_{i=1}^{N}\bar{u}^i_s \observ\left(\bar{u}^i_s\right)'
				-
				\mathbb{E}_{Y}\left[\bar{u}_s\observ\left(\bar{u}_s\right)'\right]
			}{\HHilbert^{d_y} }^2	
			\right]\mathrm{d}s
			\leq
			\frac{C_1\left(T,\norm{\observ}{\infty}\right)}{N}
		\end{align}
		
		Finally we note that similar to \eqref{proof convergence order in N - inequality 1} one can easily deduce 
		\begin{align*}
			&\mathbb{E}_{Y}\left[	
			\norm{\mathbb{E}^N\left[\bar{\mathfrak{u}}^N_s\right] \mathbb{E}^N_{\observ}\left[\bar{\mathfrak{u}}^N_s\right]
				-
				\mathbb{E}_Y\left[\bar{u}_s\right] \mathbb{E}_Y\left[\observ\left(\bar{u}_s\right)\right]'
			}{\HHilbert^{d_y} }^2	
			\right]
			\\&\leq
			2\mathbb{E}_{Y}\left[	
			\norm{\mathbb{E}^N\left[\bar{\mathfrak{u}}^N_s\right] 
				-
				\mathbb{E}_Y\left[\bar{u}_s\right] 
			}{\HHilbert^{d_y} }^2	
			\left|\mathbb{E}^N_{\observ}\left[\bar{\mathfrak{u}}^N_s\right]\right|^2
			\right]
			+
			2
			\norm{ \mathbb{E}_Y\left[\bar{u}_s\right] 
			}{\HHilbert^{d_y} }^2	
			\mathbb{E}_{Y}\left[	
			\left|\mathbb{E}^N_{\observ}\left[\bar{\mathfrak{u}}^N_s\right]
			-
			\mathbb{E}_Y\left[\observ\left(\bar{u}_s\right)\right]\right|^2
			\right]
			\\&\leq
			\frac{\norm{\observ}{\infty}^2
				\boundB e^{\oneSidedLip s}
			}{N}
			+
			\norm{ \mathbb{E}_Y\left[\bar{u}_s\right] 
			}{\HHilbert^{d_y} }^2	
			\frac{\mathrm{Lip}\left(\observ\right)^2\boundB e^{\oneSidedLip s}}{N}.
		\end{align*}
		Finally we can thus derive by the boundedness \eqref{bound 2nd absolute moment} of the second absolute moment of $\bar{u}$, that there exists a constant $C_2\left(T,\norm{\observ}{\infty}\right)>0$ such that
		\begin{align}\label{proof convergence order in N - inequality 3}
			\int_{0}^{T}\mathbb{E}\left[	
			\norm{\mathbb{E}^N\left[\bar{\mathfrak{u}}^N_s\right] \mathbb{E}^N_{\observ}\left[\bar{\mathfrak{u}}^N_s\right]
				-
				\mathbb{E}_Y\left[\bar{u}_s\right] \mathbb{E}_Y\left[\observ\left(\bar{u}_s\right)\right]'
			}{\HHilbert^{d_y} }^2	
			\right]\mathrm{d}s
			\leq
			\frac{C_2\left(T,\norm{\observ}{\infty}\right)}{N}
		\end{align}
		
		By the definition of $\mathrm{LLN}^{N}_{\observ}$ (in Definition \ref{Definition LLN}), combining the inequalities \eqref{proof convergence order in N - inequality 1},\eqref{proof convergence order in N - inequality 2},\eqref{proof convergence order in N - inequality 3} with \eqref{optimal rates - almost} concludes our proof.
	\end{proof}
	
	\begin{Rmk}
		Since the constant $\kappa(T,p,\nu)$ blows up for $\nu\to 1$ or $\nu\to 1/p$, we  can not simply take the limit $p\to 1$ in \eqref{optimal rates - almost}. As $p<1$, the $p$-th power is concave and thus we can not deduce that \eqref{optimal rates - almost} would also hold if the power on the left-hand-side of the inequality would be written outside! We thus say that \eqref{optimal rates - almost} gives almost optimal rates in the sense that this inequality holds for all $p<1$, but not for the optimal value $p=1$, where the the expectations on both sides are indeed of the same nature. In a similar sense the stronger inequality \eqref{optimal rates before Hoelder} shows almost optimal rates, in the sense that it holds for all $\nu>1$ but not for $\nu=1$, the case where the moments on both sides are of the same order.
	\end{Rmk}
	
	Besides blowing up when $p\to 1$, the constant $\kappa(T,p)$ also grows hyperexponentially in $T$! While Theorem \ref{Thm optimal convergence} thus provides us with (almost) optimal convergence rates, the constants involved are far too large to give useful a priori error estimates even on moderate time intervals.\newline
	
	\begin{Rmk}[Literature]\label{Literature - mean-field linear Gaussian}
		In the finite dimensional and linear Gaussian setting this problem has been tackled by a large number of papers. In this setting a first propagation of chaos result was achieved by \cite{DelMoralTugaut}, even showing uniform in time convergence for stable signals. This result has by now been followed up by several works \cite{Riccati diff - stability},\cite{Riccati diff - 1d},\cite{Riccati diff - Perturbation} treating unstable signals by making use of the Riccati equation that appears in this setting. An alternative extension of the EnKBF to nonlinear signals using a Taylor-inspired linearization around the mean, similar to the extended Kalman--Bucy filter,  was considered in  \cite{DelMoralKurtzmannTugaut}. The linearization there also allowed for the use of a decoupled Riccati equation. While an extension of these uniform in time results to our nonlinear setting is certainly highly desirable, we do not investigate it in this paper.\newline
	\end{Rmk}

	\section*{Acknowledgements and funding}
	
		Sebastian Ertel is supported by
		Deutsche Forschungsgemeinschaft through 
		\emph{IRTG 2544 - Stochastic Analysis in Interaction}.

	\clearpage
	\begin{appendices}

		\section{The Kushner-Stratonovich equation and the law of total variance}\label{section Kushner Stratonovich}\label{section variance inequality}
		
		In this section we recall the Kushner--Stratonovich equation (KSE), which describes the evloution of the posterior distribution \eqref{posterior}, and the law of total variance.\newline
		
		For the sake of brevity let us denote the (Bochner-)integral of a testfunction $\phi:\HHilbert\to\R$ w.r.t. the posterior $\eta_t,~t\geq 0$ by
		\begin{align*}
			\eta_t\left[\phi\right]:=\int_{\HHilbert} \phi(v)~\eta_t(\mathrm{d}v).
		\end{align*}
		
		With this notation in mind one can show that the posterior $\eta$ satisfies the Kushner--Stratonovich equation (KSE)
		\begin{align}\label{weak KSE}
			\mathrm{d}\eta_t[\phi]
			=
			\eta_t[\mathcal{L}\phi]\mathrm{d}t
			+
			\left(\eta_t[\phi \observ]-\eta_t[\phi]\eta_t[\observ]\right) R^{-1}_t\left(\mathrm{d}Y_t-\eta_t[\observ]\mathrm{d}t\right),
		\end{align}
		where $\mathcal{L}$ is the generator of $u$ defined in \eqref{signal generator} and $\phi$ is an arbitrary Itô testfunction (see Definition \ref{Ito function}). 
		
		\begin{Rmk}
			In finite dimensional settings this is the weak form of the KSE, also referred to as the Fujisaki-–Kallianpur-–Kunita equation. The strong form of the KSE is a nonlinear and nonlocal Fokker--Planck equation describing the evolution of the posterior density. However in our infinite dimensional setting the density (w.r.t. the Lebesgue-measure) does not exist and thus one has to work with the weak formulation \eqref{weak KSE}. The derivation of such equations for infinite dimensional filtering problems is a classical subject in stochastic analysis, studied in e.g. \cite{AhmedFuhrman},\cite{BoulangerSchiltz}, \cite{HobbsSritharan}, \cite{Yu}. However, due to the observations being finite dimensional in our case, the derivation of the KSE can also be done with classical approaches for the finite dimensional setting such as the Innovation Process Approach \cite{BainCrisan}.
		\end{Rmk}

		A key tool in our analysis of the EnKBF and its mean field limit were upper variance bounds. These upper bounds only depended on the coefficients of the signal and in particular were independent of the observation function $\observ$ and the actual observation data $Y$. This is a property that is shared by the posterior distribution \eqref{posterior}, as the projection properties of the conditional expectation imply, for $\mathbb{Cov}_{\eta_t}\left[\mathrm{id}_{\HHilbert}\right]$ denoting the covariance operator of the posterior, the inequality
		\begin{align}\label{law of total variance - direct inequality}
			\mathbb{E}\left[~\eta_t(\norm{\cdot}{\HHilbert}^2)-
			\norm{\eta_t(\mathrm{id}_{\HHilbert})}{\HHilbert}^2
			~\right]
			=
			\mathbb{E}\left[~\mathrm{tr}_{\HHilbert}~\mathbb{Cov}_{\eta_t}\left[\mathrm{id}_{\HHilbert}\right]~\right]
			\leq
			\mathrm{tr}_{\HHilbert}~\mathbb{Cov}\left[u_t\right].
		\end{align}

		Now we note that by the covariance dynamics \eqref{equation signal covariance} and Parseval\footnote{One could have also directly looked at the dynamics of $\mathbb{E}\left[\norm{u_t-m_t}{\HHilbert}^2\right]$ and proved this via the well known Itô formula for the norm.} we obtain
		\begin{align*}
			\partial_t \mathrm{tr}_{\HHilbert}~\mathbb{Cov}\left[u_t\right]
			&=
			2 \sum_{k\in\N}\mathbb{E}\left[\left\langle \nu_k,u_t-m_t\right\rangle_{\HHilbert}
			\prescript{}{\VHilbert'}{\left\langle \Adrift(u_t)-\Adrift(m_t),\nu_k
				\right\rangle_{\VHilbert}}\right]
			\\&\phantom{=}+
			\sum_{k\in\N}
			\left\langle \nu_k, \mathbb{E}
			\left[\Bvol(u_t)\sqrt{\Qcor}
			\left(\Bvol(u_t)\sqrt{\Qcor}\right)'
			\right] \nu_k
			\right\rangle_{\HHilbert}
			\\&=
			\mathbb{E}\left[2
			\prescript{}{\VHilbert'}{\left\langle \Adrift(u_t)-\Adrift(m_t),u_t-m_t
				\right\rangle_{\VHilbert}}\right]
			+
			\mathbb{E}\left[
			\mathrm{tr}_{\HHilbert}\left[\Bvol(u_t)\sqrt{\Qcor}
			\left(\Bvol(u_t)\sqrt{\Qcor}\right)'\right]
			\right]
		\end{align*}

		To estimate the first term we use the one-sided Lipschitz condition \eqref{one sided Lipschitz} and for the second term we use Assumption \ref{bounded signal diffusion}. This gives us
		\begin{align*}
			\partial_t \mathrm{tr}_{\HHilbert}~\mathbb{Cov}\left[u_t\right]
			\leq
			\oneSidedLip~
			\mathrm{tr}_{\HHilbert}~\mathbb{Cov}\left[u_t\right]
			+
			\boundB.
		\end{align*}
		
		Which, by Grönwall, implies $
		\mathrm{tr}_{\HHilbert}~\mathbb{Cov}\left[u_t\right] \leq \boundB e^{\oneSidedLip t}$. Together with the variance bound for the posterior this gives us
		\begin{align}\label{variance bound for posterior}
			\mathbb{E}\left[~\eta_t(\norm{\cdot}{\HHilbert}^2)-
			\norm{\eta_t(\mathrm{id}_{\HHilbert})}{\HHilbert}^2
			~\right]
			=
			\mathbb{E}\left[~\mathrm{tr}_{\HHilbert}~\mathbb{Cov}_{\eta_t}\left[\mathrm{id}_{\HHilbert}\right]~\right]
			\leq
			\boundB e^{\oneSidedLip t},
		\end{align}
		which is the same  bound that was shown for the empirical variance of the EnKBF in \eqref{variance inequality particle system - in expectation} and for the (conditional) variance of the mean field EnKBF \eqref{variance bound tilde u}. Note in particular that this bound is independent of, and thus robust in, both the observation data $Y$ and the observation function $\observ$.	In probability theory the identity 
		\begin{align*}
			\mathbb{E}\left[\mathbb{Var}\left[X|Y\right]\right]
			+
			\mathbb{Var}\left[\mathbb{E}\left[X|Y\right]\right]
			=
			\mathbb{Var}\left[X\right]
		\end{align*}
		for any two random variables $X,Y$, which implies inequality \eqref{law of total variance - direct inequality}, is often referred to as the law of total variance. As we have seen, inequality \eqref{variance bound for posterior} is a direct consequence of this identity and thus the EnKBF (and its mean field limit)  also satisfies a form of law of total variance.\newline

		This is not the only connection of the EnKBF to the filtering problem, as in the linear Gaussian setting (see Definition \ref{linear Gaussian setting} below) the law of its mean field limit \eqref{mean field EnKBF - introduction} is a solution to the KSE and thus coincides with the posterior \eqref{posterior} (assuming that solutions to the KSE are unique).
		
		\begin{Def}\label{linear Gaussian setting}[Linear Gaussian Setting]
			When we speak of the linear Gaussian setting we mean that
			\begin{itemize}
				\item $\Adrift:\VHilbert\to\VHilbert'$ is linear
				
				\item $\Bvol\in\LinSpace{\UHilbert}{\HHilbert}$ is a constant linear operator, i.e. it is independent of the state $u$.
				
				\item $H\in\LinSpace{\HHilbert}{\R^{d_y}}$ is a linear operator
				
				\item the initial condition (of the signal and posterior) $u_0$ is a Gaussian on $\HHilbert$, i.e.
				\begin{align*}
					u_0\sim\mathcal{N}\left(m_0,P_0\right)
					~\text{with}~
					m_0\in\HHilbert
					~\text{and}~
					P_0\in\LinSpace{\HHilbert}{\HHilbert}~\text{is symmetric positive semidefinite}.
				\end{align*}
			\end{itemize}
		\end{Def}
		
		In this linear Gaussian setting one can show (see e.g. \cite{Yu}) that the solution of the KSE is a Gaussian $\mathcal{N}\left(m_t,P_t\right)$, with mean $m$ and covariance $P$ given by the famous Kalman--Bucy equation
		\begin{subequations}\label{Kalman Bucy}
			\begin{equation}\label{linear mean evolution}
				\mathrm{d}m_t
				=
				\Adrift m_t\mathrm{d}t+ P_t \observ' R^{-1}_t\left(\mathrm{d}Y_t-\observ m_t\mathrm{d}t\right)
			\end{equation}
			\begin{equation}\label{Riccati equation}
				\frac{\mathrm{d}P_t}{\mathrm{d}t}
				=
				\Adrift P_t + P_t \Adrift'
				-
				P_t \observ' R^{-1}_t \observ P_t
				+
				\left(\Bvol\sqrt{\Qcor}\right)
				\left(\Bvol\sqrt{\Qcor}\right)'
				.
			\end{equation}
		\end{subequations}
		
		In our infinite dimensional variational setting these equations \eqref{Kalman Bucy} were first studied by Bensoussan \cite[Théorème 3.1]{Bensoussan - filtrage optimal}, proving existence and uniqueness of solutions. Now note that one can easily show that in the linear Gaussian setting of Definition \ref{linear Gaussian setting}, the mean field EnKBF \eqref{mean field EnKBF - introduction}, which we write here again as
		\begin{align}\label{linear EnKBF}
			\mathrm{d}\bar{u}_t=\Adrift \bar{u}_t \mathrm{d}t+\Bvol\mathrm{d}\bar{W}_t
			+
			\bar{P}_t \observ' R^{-1}_t\left(\mathrm{d}Y_t-H\frac{\bar{u}_t+\bar{m}_t}{2}
			\mathrm{d}t
			\right),
		\end{align}
		defines an (infinite dimensional) Ornstein--Uhlenbeck process. As one can easily verify with Itô's formula, its (conditional) mean $\bar{m}$ and covariance $\bar{P}$ satisfy the Kalman--Bucy equations \eqref{Kalman Bucy}. Thus by the uniqueness of \eqref{Kalman Bucy}, the (conditional) law $\bar{\eta}_t$ of $\bar{u}_t$ has to coincide with the posterior \eqref{posterior}!
		
		\section{The Feedback Particle Filter}\label{section FPF}
		
		In the linear Gaussian setting (Definition \ref{linear Gaussian setting}) the mean field EnKBF \eqref{linear EnKBF} describes a diffusion process with the remarkable property, that its (conditional) time-marginal laws are given by the desired posterior distribution. In the general setting this attribute is achieved by the Feedback Particle Filter (FPF). It is given by
		\begin{align}\label{FPF}
			\begin{split}
				\mathrm{d}\hat{u}_t
				&=\Adrift(\hat{u}_t)\mathrm{d}t+\Bvol(\hat{u}_t)\mathrm{d}\bar{W}_t
				\\&\phantom{=}+
				\Kalman(\hat{u}_t,\hat{\eta}_t) \left(\mathrm{d}Y_t-\frac{\observ(\hat{u}_t)+\mathbb{E}_Y\left[\observ(\hat{u}_t)\right]}{2}
				\mathrm{d}t
				\right)
				+\frac{1}{2}\xi(\hat{u}_t,\hat{\eta}_t)\mathrm{d}t,
			\end{split}
		\end{align}
		where $\hat{\eta}_t$ denotes the conditional distribution of $\hat{u}_t$. The so called gain term $\Kalman(\cdot,\hat{\eta}_t):\HHilbert\to \HHilbert\times\R^{d_y}$ is (not uniquely) determined by the weak differential equation
		\begin{align}\label{Kalman gain equation}
			\hat{\eta}_t\left[\left\langle\Frechet{\HHilbert}{\phi}~,~\Kalman(\cdot,\hat{\eta}_t)\right\rangle_{\HHilbert}\right]
			=
			\hat{\eta}_t\left[\phi\left(\observ-\hat{\eta}_t\left[\observ\right]\right)'\right]R^{-1}_t
			~\text{for all Itô testfunctions}~\phi.
		\end{align}
		Just as in section \ref{section problem setting} we  denote by $\Frechet{\HHilbert}{\phi}$ and $\Hesse{\HHilbert}{\phi}$ the first and second order Fréchét  derivatives on the Hilbert space $\HHilbert$. 
		The correctional drift term $\xi(\cdot,\hat{\eta}_t):\HHilbert\to \HHilbert$ is given by
		\begin{align*}
			\xi(\hat{u}_t,\hat{\eta}_t)
			&:=
			\left(
			\left\langle \Kalman\left(\hat{u}_t,\hat{\eta}_t\right),\nabla\right\rangle_{\mathscr{H}} R_t
			\Kalman\left(\hat{u}_t,\hat{\eta}_t\right)^{'}
			\right)^{'}
			\\&:=
			\sum_{j\in\N}\sum_{k\in\N}
			\left\langle\nu_j,\partial_{\nu_k}\Kalman(\hat{u}_t,\hat{\eta}_t)\right\rangle_{\HHilbert}R_t
			\left\langle\nu_k,\Kalman(\hat{u}_t,\hat{\eta}_t)\right\rangle_{\HHilbert}^{\mathrm{T}}
			\nu_j
			,
		\end{align*}
		where $\partial_{\nu_k}$ denotes the derivative in direction of the basis vector $\partial_{\nu_k}$.\newline
		
		In the finite dimensional setting the FPF was first derived in \cite{YangMehtaMeyn1} with an optimal control approach, independently and prior to this work a similar mean-field optimal filter for smoothed noise and finite dimensional signals has been found in \cite{CrisanXiong}. In \cite{PathirajaStannatReich} various finite dimensional consistent mean field filters, among them the original FPF, have been derived by matching the strong Fokker--Planck equation of a diffusion process to the strong form of the KSE \eqref{weak KSE}. Building on this work we now extend the FPF to infinite dimensions by showing that it describes the optimal filter, in the sense that the (conditional) law of \eqref{FPF} propagates in time exactly according to the KSE \eqref{weak KSE}. However, since we are working in the infinite setting, we do so by matching the weak Fokker--Planck equation to the weak KSE \eqref{weak KSE}.\newline
		
		\begin{Rmk}
			The well posedness of the FPF \eqref{FPF} is an open problem, even in the much simpler finite dimensional case, and is thus just assumed in the following.
		\end{Rmk}

		\begin{Lemma}
			Denote by $\left(\hat{\eta}_t\right)_{t\geq 0}$ the law of the FPF $\left(\hat{u}_t\right)_{t\geq 0}$, given by \eqref{FPF}. We assume that for all times $t\geq 0$ both $\Kalman(\cdot,\hat{\eta}_t)$ and $\xi(\cdot,\hat{\eta}_t)$ are well defined functions from $\HHilbert$ into $\HHilbert$, that are integrable with respect to $\hat{\eta}_t$.\\
			Then $\hat{\eta}$ satisfies the KSE \eqref{weak KSE} for all Itô testfunctions (according to Definition \ref{Ito function}) $\phi:\HHilbert\to\R$, with the following properties
			\begin{itemize}			
				\item $\phi$ is integrable with respect to $\hat{\eta}_t$ for all $t\geq 0$.
				
				\item For all $v\in H$ the Hessian $\Hesse{\HHilbert}{\phi}(v)$ is a self adjoint operator on $\HHilbert$.
				
				\item The map $\hat{\Phi}:\HHilbert\to\R^{d_y}$, defined by $\hat{\Phi}(v):=\left\langle \Frechet{\HHilbert}{\phi}(v),\Kalman(v,\hat{\eta}_t)\right\rangle_{\HHilbert}R_t$, is an Itô function (componentwise) that is also integrable with respect to $\eta_t$ for all $t\geq 0$.

			\end{itemize}
		\end{Lemma}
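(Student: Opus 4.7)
My plan is to apply the variational Itô formula of~\cite{Pardoux - French} to $\phi(\hat{u}_t)$, then take the conditional expectation $\mathbb{E}_Y$, and verify that the resulting evolution of $\hat{\eta}_t[\phi]=\mathbb{E}_Y[\phi(\hat{u}_t)]$ coincides with the weak KSE~\eqref{weak KSE}. By the form~\eqref{signal generator} of the signal generator $\mathcal{L}$, the signal part $\Adrift(\hat{u}_t)\mathrm{d}t+\Bvol(\hat{u}_t)\mathrm{d}\bar{W}_t$ of the FPF~\eqref{FPF} contributes exactly $\hat{\eta}_t[\mathcal{L}\phi]\mathrm{d}t$ after conditioning, since $\bar{W}$ is independent of $Y$ and the corresponding stochastic integral vanishes under $\mathbb{E}_Y$. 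The observation part of~\eqref{FPF} contributes four further terms: an $\mathrm{d}Y_t$-integral with integrand $\hat{\eta}_t[\langle \Frechet{\HHilbert}{\phi},\Kalman(\cdot,\hat{\eta}_t)\rangle_{\HHilbert}]$; a drift $-\tfrac{1}{2}\hat{\eta}_t[\langle \Frechet{\HHilbert}{\phi},\Kalman(\observ+\hat{\eta}_t[\observ])\rangle_{\HHilbert}]\mathrm{d}t$ coming from the explicit $\mathrm{d}t$-term of the FPF innovation; the correctional drift $\tfrac{1}{2}\hat{\eta}_t[\langle \Frechet{\HHilbert}{\phi},\xi\rangle_{\HHilbert}]\mathrm{d}t$; and the Itô correction $\tfrac{1}{2}\hat{\eta}_t[\mathrm{tr}_{\HHilbert}[\Hesse{\HHilbert}{\phi}\Kalman R_t\Kalman']]\mathrm{d}t$ arising from the $\Gamma_t\mathrm{d}V_t$-part of the quadratic variation of $\mathrm{d}Y_t$.

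Testing the gain equation~\eqref{Kalman gain equation} against $\phi$ itself immediately rewrites the $\mathrm{d}Y_t$-integral as $(\hat{\eta}_t[\phi\observ']-\hat{\eta}_t[\phi]\hat{\eta}_t[\observ'])R_t^{-1}\mathrm{d}Y_t$, matching the martingale part of~\eqref{weak KSE}. It remains to show that the three drift terms collapse to $-(\hat{\eta}_t[\phi\observ']-\hat{\eta}_t[\phi]\hat{\eta}_t[\observ'])R_t^{-1}\hat{\eta}_t[\observ]\mathrm{d}t$, which, applying~\eqref{Kalman gain equation} once more, is equivalent to the key identity
\[
\hat{\eta}_t[\langle \Frechet{\HHilbert}{\phi},\Kalman(\observ-\hat{\eta}_t[\observ])\rangle_{\HHilbert}]
=\hat{\eta}_t[\langle \Frechet{\HHilbert}{\phi},\xi\rangle_{\HHilbert}]+\hat{\eta}_t[\mathrm{tr}_{\HHilbert}[\Hesse{\HHilbert}{\phi}\Kalman R_t\Kalman']].
\]
To prove this identity I would unfold the explicit form of $\xi$ and use the product rule $\langle \Frechet{\HHilbert}{\phi},\partial_{\nu_k}\Kalman\rangle_{\HHilbert}=\partial_{\nu_k}\langle \Frechet{\HHilbert}{\phi},\Kalman\rangle_{\HHilbert}-\langle \Hesse{\HHilbert}{\phi}\nu_k,\Kalman\rangle_{\HHilbert}$. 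Parseval combined with self-adjointness of $\Hesse{\HHilbert}{\phi}$ turns the second summand, after summation over $k$, into precisely $\mathrm{tr}_{\HHilbert}[\Hesse{\HHilbert}{\phi}\Kalman R_t\Kalman']$, which cancels the Itô-correction contribution. What remains rewrites, in the canonical basis $(e_j)$ of $\R^{d_y}$, as $\sum_{j=1}^{d_y}\hat{\eta}_t[\langle \Frechet{\HHilbert}{\hat{\Phi}_j},\Kalman e_j\rangle_{\HHilbert}]$; because condition~3 of the statement guarantees that each component $\hat{\Phi}_j$ is an Itô function, a final application of~\eqref{Kalman gain equation} to each $\hat{\Phi}_j$, together with the symmetry of $R_t$, reproduces the left-hand side after regrouping indices.

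The main obstacle will be justifying this last application of~\eqref{Kalman gain equation} to $\hat{\Phi}_j$, since~\eqref{Kalman gain equation} is a priori stated only for compactly supported Itô testfunctions whereas $\hat{\Phi}_j$ is merely an Itô function. I plan to handle this via a cut-off argument, multiplying $\hat{\Phi}_j$ by a sequence of Itô testfunctions $\chi_n\uparrow 1$, applying~\eqref{Kalman gain equation} to $\chi_n\hat{\Phi}_j$ and passing to the limit by dominated convergence, using the integrability of $\hat{\Phi}$ against $\hat{\eta}_t$ assumed in the lemma. Secondary care is needed in commuting $\mathbb{E}_Y$ with the $\mathrm{d}Y_t$-stochastic integral; this is handled by the standard innovation-process approach, exploiting the mutual independence of $\bar{W}$, $V$ and $u_0$ and the finite dimensionality of the observations, as in the classical derivation of the KSE in~\cite{BainCrisan}.
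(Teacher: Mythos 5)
Your proposal follows essentially the same route as the paper's proof: derive the Kolmogorov forward equation for $\hat{\eta}_t[\phi]$ via Itô's formula and conditioning, use \eqref{Kalman gain equation} with testfunction $\phi$ to match the innovation term of \eqref{weak KSE}, and reduce everything to the same key identity, which is then verified by the product rule, Parseval, self-adjointness of $\Hesse{\HHilbert}{\phi}$ (cancelling the trace/Itô-correction term), and a componentwise application of \eqref{Kalman gain equation} to $\hat{\Phi}$. The only difference is that you explicitly justify applying the gain equation to the non-compactly-supported $\hat{\Phi}$ via a cut-off and dominated convergence, a point the paper passes over by simply invoking the assumed regularity and integrability of $\hat{\Phi}$; this is a welcome refinement rather than a different argument.
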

		\begin{proof}
			Let $\phi:\HHilbert\to\R$ be arbitrary, satisfying the properties specified above. We note by Itô's formula, that the Kolmogorov forward equation describing the evolution of $\hat{\eta}$ is given by
			\begin{align}\label{Fokker Planck of FPF}
				\begin{split}
					\mathrm{d}\hat{\eta}_t[\phi]
					&=
					\hat{\eta}_t\left[\mathcal{L}\phi\right]\mathrm{d}t
					+
					\hat{\eta}_t\left[\left\langle\Frechet{\HHilbert}{\phi},\Kalman(\cdot,\hat{\eta}_t)\right\rangle_{\HHilbert}\right]
					\left(\mathrm{d}Y_t-\hat{\eta}_t[\observ]\mathrm{d}t\right)
					\\&\phantom{=}+
					\frac{
						\hat{\eta}_t\left[\left\langle\Frechet{\HHilbert}{\phi},\xi(\cdot,\hat{\eta}_t)\right\rangle_{\HHilbert}\right]
						+
						\hat{\eta}_t\left[
						\mathrm{tr}_{\HHilbert}\left[\left(\Hesse{\HHilbert}{\phi}\right) \Kalman(\cdot,\hat{\eta}_t) R_t \Kalman(\cdot,\hat{\eta}_t)' \right]
						\right]
					}{2}\mathrm{d}t
					\\&\phantom{=}-
					\frac{\hat{\eta}_t\left[\left\langle\Frechet{\HHilbert}{\phi},\Kalman(\cdot,\hat{\eta}_t)(\observ-\hat{\eta}_t(\observ))\right\rangle_{\HHilbert}\right]}{2}\mathrm{d}t
				\end{split}
			\end{align}
			
			Due to \eqref{Kalman gain equation}, the first line of \eqref{Fokker Planck of FPF} is exactly the KSE and thus, to show consistency, we only have to prove that the second line is zero.  To this end we note that by Parseval we have
			\begin{align*}
				&\left\langle\Frechet{\HHilbert}{\phi},\xi(\cdot,\hat{\eta}_t)\right\rangle_{\HHilbert}
				+
				\mathrm{tr}_{\HHilbert}\left[\left(\Hesse{\HHilbert}{\phi}\right) \Kalman(\cdot,\hat{\eta}_t) R_t \Kalman(\cdot,\hat{\eta}_t)' \right]
				\\&=
				\sum_{j\in\N}
				\left\langle\nu_j,\Frechet{\HHilbert}{\phi}\right\rangle_{\HHilbert}
				\left\langle\nu_j,\xi(\cdot,\hat{\eta}_t)\right\rangle_{\HHilbert}
				+
				\sum_{k\in\N}
				\left\langle\nu_k,\left(\Hesse{\HHilbert}{\phi}\right) \Kalman(\cdot,\hat{\eta}_t) R \Kalman(\cdot,\hat{\eta}_t)'\nu_k\right\rangle_{\HHilbert}
				\\&=
				\sum_{j\in\N}
				\sum_{k\in\N}
				\left\langle\nu_j,\Frechet{\HHilbert}{\phi}\right\rangle_{\HHilbert}
				\left\langle\nu_j,\partial_{\nu_k}\Kalman(\hat{u}_t,\hat{\eta}_t)\right\rangle_{\HHilbert} R_t
				\left\langle\nu_k,\Kalman(\hat{u}_t,\hat{\eta}_t)\right\rangle_{\HHilbert}^{\mathrm{T}}
				\\&\phantom{=}+
				\sum_{k\in\N}
				\left\langle\nu_k,\left(\Hesse{\HHilbert}{\phi}\right) \Kalman(\cdot,\hat{\eta}_t)\right\rangle_{\HHilbert} R_t
				\left\langle \nu_k,\Kalman(\cdot,\hat{\eta}_t)\right\rangle_{\HHilbert}^{\mathrm{T}}
			\end{align*}
			
			Next we now that since $\Hesse{\HHilbert}{\phi}$ is self adjoint and by Parseval we have
			\begin{align*}
				\left\langle\nu_k,\Hesse{\HHilbert}{\phi} \Kalman(\cdot,\hat{\eta}_t)\right\rangle_{\HHilbert}
				&=
				\left\langle\Hesse{\HHilbert}{\phi}\nu_k, \Kalman(\cdot,\hat{\eta}_t)\right\rangle_{\HHilbert}
				=
				\left\langle\partial_{\nu_k}\Frechet{\HHilbert}{\phi}, \Kalman(\cdot,\hat{\eta}_t)\right\rangle_{\HHilbert}
				\\&=
				\sum_{j\in\N}
				\left\langle\nu_j,\partial_{\nu_k}\Frechet{\HHilbert}{\phi}\right\rangle_{\HHilbert}
				\left\langle\nu_j, \Kalman(\cdot,\hat{\eta}_t)\right\rangle_{\HHilbert}.
			\end{align*}
			
			This gives us by again using Parseval
			\begin{align*}
				&\left\langle\Frechet{\HHilbert}{\phi},\xi(\cdot,\hat{\eta}_t)\right\rangle_{\HHilbert}
				+
				\mathrm{tr}_{\HHilbert}\left[\left(\Hesse{\HHilbert}{\phi}\right) \Kalman(\cdot,\hat{\eta}_t) R_t \Kalman(\cdot,\hat{\eta}_t)' \right]
				\\&=
				\sum_{k\in\N}
				\sum_{j\in\N}
				\left\langle\nu_j,\Frechet{\HHilbert}{\phi}\right\rangle_{\HHilbert}
				\left\langle\nu_j,\partial_{\nu_k}\Kalman(\hat{u}_t,\hat{\eta}_t)\right\rangle_{\HHilbert}R_t
				\left\langle\nu_k,\Kalman(\hat{u}_t,\hat{\eta}_t)\right\rangle_{\HHilbert}^{\mathrm{T}}
				\\&\phantom{=}+
				\sum_{k\in\N}
				\sum_{j\in\N}
				\left\langle\nu_j,\partial_{\nu_k}\Frechet{\HHilbert}{\phi}\right\rangle_{\HHilbert}
				\left\langle\nu_j, \Kalman(\cdot,\hat{\eta}_t)\right\rangle_{\HHilbert} R_t
				\left\langle \nu_k,\Kalman(\cdot,\hat{\eta}_t)\right\rangle_{\HHilbert}^{\mathrm{T}}
				\\&=\sum_{k\in\N}
				\left(
				\left\langle\Frechet{\HHilbert}{\phi},\partial_{\nu_k}\Kalman(\cdot,\hat{\eta}_t)\right\rangle_{\HHilbert}
				+
				\left\langle\partial_{\nu_k}\Frechet{\HHilbert}{\phi},\Kalman(\cdot,\hat{\eta}_t)\right\rangle_{\HHilbert}
				\right)
				R_t
				\left\langle \nu_k,\Kalman(\cdot,\hat{\eta}_t)\right\rangle_{\HHilbert}^{\mathrm{T}}
				.
			\end{align*}
			
			Using the product formula in Hilbert spaces\footnote{More precisely the formula for the directional derivative of the scalar product of two differentiable functions.}, we thus derive
			\begin{align*}
				&\left\langle\Frechet{\HHilbert}{\phi},\xi(\cdot,\hat{\eta}_t)\right\rangle_{\HHilbert}
				+
				\mathrm{tr}_{\HHilbert}\left[\left(\Hesse{\HHilbert}{\phi}\right) \Kalman(\cdot,\hat{\eta}_t) R_t \Kalman(\cdot,\hat{\eta}_t)' \right]
				\\&=
				\sum_{k\in\N}
				\partial_{\nu_k}\left\langle \Frechet{\HHilbert}{\phi},\Kalman(\cdot,\hat{\eta}_t)\right\rangle_{\HHilbert} R_t
				\left\langle \nu_k,\Kalman(\cdot,\hat{\eta}_t)\right\rangle_{\HHilbert}^{\mathrm{T}}.
			\end{align*}
			
			The map $\hat{\Phi}$, defined in the statement of the Lemma, then allows us to again use Parseval to derive
			\begin{align*}
				&\left\langle\Frechet{\HHilbert}{\phi},\xi(\cdot,\hat{\eta}_t)\right\rangle_{\HHilbert}
				+
				\mathrm{tr}_{\HHilbert}\left[\left(\Hesse{\HHilbert}{\phi}\right) \Kalman(\cdot,\hat{\eta}_t) R_t \Kalman(\cdot,\hat{\eta}_t)' \right]
				\\&=
				\sum_{k\in\N}
				\left(\partial_{\nu_k}\hat{\Phi}\right)
				\left\langle \nu_k,\Kalman(\cdot,\hat{\eta}_t)\right\rangle_{\HHilbert}^{\mathrm{T}}
				=
				\sum_{k\in\N}
				\left\langle\nu_k, \Frechet{\HHilbert}{\hat{\Phi}}\right\rangle_{\HHilbert}
				\left\langle \nu_k,\Kalman(\cdot,\hat{\eta}_t)\right\rangle_{\HHilbert}^{\mathrm{T}}
				\\&=
				\sum_{i=1}^{d_y}\sum_{k\in\N}\sum_{i=1}^{d_y}
				\left\langle\nu_k, \Frechet{\HHilbert}{\hat{\Phi}}\right\rangle_{\HHilbert}
				\delta_i\delta_i^\mathrm{T}
				\left\langle \nu_k,\Kalman(\cdot,\hat{\eta}_t)\right\rangle_{\HHilbert}^{\mathrm{T}}
				=
				\sum_{i=1}^{d_y}
				\left\langle\Frechet{\HHilbert}{\hat{\Phi}}\delta_i,\Kalman(\cdot,\hat{\eta}_t) \right\rangle_{\HHilbert}\delta_i,
			\end{align*}
			where $\delta_i,~i=1,\cdots,d_y$ denotes the canonical basis of $\R^{d_y}$.	Thus, by the assumed regularity of $\hat{\Phi}$, we derive by using \eqref{Kalman gain equation}
			\begin{align*}
				&\hat{\eta}_t\left[\left\langle\Frechet{\HHilbert}{\phi},\xi(\cdot,\hat{\eta}_t)\right\rangle_{\HHilbert}\right]
				+
				\hat{\eta}_t\left[\mathrm{tr}_{\HHilbert}\left[\left(\Hesse{\HHilbert}{\phi}\right) \Kalman(\cdot,\hat{\eta}_t) R_t\Kalman(\cdot,\hat{\eta}_t)' \right]\right]
				\\&=\sum_{i=1}^{d_y}
				\hat{\eta}_t\left[\left\langle \Frechet{\HHilbert}{\hat{\Phi}}\delta_i,\Kalman(\cdot,\hat{\eta}_t)\right\rangle_{\HHilbert}\right]\delta_i
				=
				\sum_{i=1}^{d_y}
				\hat{\eta}_t\left[\hat{\Phi}\delta_i\left(\observ-\hat{\eta}[\observ]\right)'\right]R^{-1}_t\delta_i
				\\&=
				\sum_{i=1}^{d_y}
				\hat{\eta}_t\left[
				\left\langle \Frechet{\HHilbert}{\phi},\Kalman(\cdot,\hat{\eta}_t)\right\rangle_{\HHilbert}R_t
				\delta_i\left(\observ-\hat{\eta}[\observ]\right)'\right] R^{-1}_t\delta_i
				\\&=
				\sum_{i=1}^{d_y}
				\hat{\eta}_t\left[
				\left\langle \Frechet{\HHilbert}{\phi},\Kalman(\cdot,\hat{\eta}_t)\right\rangle_{\HHilbert}R_t
				\delta_i\delta_i^{\mathrm{T}}R^{-1}_t\left(\observ-\hat{\eta}[\observ]\right)'\right]
				\\&=
				\hat{\eta}_t\left[
				\left\langle \Frechet{\HHilbert}{\phi},\Kalman(\cdot,\hat{\eta}_t)\left(\observ-\hat{\eta}[\observ]\right)\right\rangle_{\HHilbert}\right]
				,
			\end{align*}
			which in turn lets us  conclude that \eqref{Fokker Planck of FPF} coincides with the KSE \eqref{weak KSE} and thus the FPF is indeed consistent.
		\end{proof}

		The FPF is a true generalization of the EnKBF to general filtering problems and it even provides a connection between the EnKBF and the true posterior even in inconsistent setting as the following Lemma shows.
		
		\begin{Lemma}
			Let again $\left(\hat{\eta}\right)_{t\geq 0}$ be the (conditional) marginal laws to the FPF \eqref{FPF}. Assuming integrability of $\Kalman(\cdot,\hat{\eta}_t)$, then it holds that
			\begin{align}\label{constant gain approximation}
				\mathbb{E}_Y\left[\Kalman(\hat{u}_t,\hat{\eta}_t)\right]
				=
				\hat{\eta}_t\left( \Kalman(\cdot,\hat{\eta}_t) \right)
				=
				\mathbb{C}_{\hat{\eta}_t}\left[\mathrm{id}_{\HHilbert}, \observ\right]R^{-1}_t
				=
				\mathbb{C}_Y\left[\hat{u}_t, \observ(\hat{u}_t)\right]R^{-1}_t.
			\end{align}
			{ If $\observ$ is linear and $\hat{\eta}_t$ is Gaussian, one can even choose the gain term $K$ such that $\Kalman(\cdot,\hat{\eta}_t)=\mathbb{C}_{\hat{\eta}_t}\left[\mathrm{id}_{\HHilbert}, \observ\right] R^{-1}_t$. In the linear Gaussian setting the EnKBF is thus just a special case of the FPF.}
		\end{Lemma}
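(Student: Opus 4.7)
The identities split into two groups: three equalities in the generic case, plus a verification in the linear Gaussian setting. My plan is to leverage equation \eqref{Kalman gain equation} for cleverly chosen test functions, and to use Gaussian integration by parts (Stein's identity) for the last claim.

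The first equality $\mathbb{E}_Y[\Kalman(\hat u_t,\hat\eta_t)]=\hat\eta_t(\Kalman(\cdot,\hat\eta_t))$ and the last equality $\mathbb{C}_{\hat\eta_t}[\mathrm{id}_{\HHilbert},\observ]R_t^{-1}=\mathbb{C}_Y[\hat u_t,\observ(\hat u_t)]R_t^{-1}$ are both essentially tautological: by definition $\hat\eta_t$ is the $Y$-conditional law of $\hat u_t$, so conditional expectations (of any $\hat\eta_t$-integrable functional) coincide with $\hat\eta_t$-integrals. This only requires the integrability assumption stated in the lemma. The real content lies in the middle equality, which I would prove by feeding a linear test function into the defining relation \eqref{Kalman gain equation}. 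Specifically, for arbitrary $w\in \VHilbert$ set $\phi_w(v):=\langle w,v\rangle_{\HHilbert}$, so that $\Frechet{\HHilbert}{\phi_w}(v)\equiv w$ and $\Hesse{\HHilbert}{\phi_w}\equiv 0$. Then the left-hand side of \eqref{Kalman gain equation} reduces to $\langle w,\hat\eta_t(\Kalman(\cdot,\hat\eta_t))\rangle_{\HHilbert}$ (a row vector in $\R^{1\times d_y}$), while the right-hand side becomes $\hat\eta_t[\langle w,v\rangle_{\HHilbert}(\observ(v)-\hat\eta_t[\observ])']R_t^{-1}=\langle w,\mathbb{C}_{\hat\eta_t}[\mathrm{id}_{\HHilbert},\observ]\rangle_{\HHilbert}R_t^{-1}$, the centering on $\langle w,\cdot\rangle_{\HHilbert}$ being free because $\observ-\hat\eta_t[\observ]$ integrates to zero. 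Since $w\in\VHilbert$ is arbitrary and $\VHilbert$ is dense in $\HHilbert$, the two Hilbert-space-valued operators $\hat\eta_t(\Kalman(\cdot,\hat\eta_t))$ and $\mathbb{C}_{\hat\eta_t}[\mathrm{id}_{\HHilbert},\observ]R_t^{-1}$ must agree.

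For the linear Gaussian postscript, the idea is to verify directly that the constant (in $v$) choice $\Kalman(\cdot,\hat\eta_t):=\mathbb{C}_{\hat\eta_t}[\mathrm{id}_{\HHilbert},\observ]R_t^{-1}$ solves \eqref{Kalman gain equation} for every Itô testfunction $\phi$. Writing $\observ(v)=Hv$ with $H\in\LinSpace{\HHilbert}{\R^{d_y}}$ and letting $\hat m,\hat P$ denote mean and covariance of $\hat\eta_t$, linearity gives $\observ(v)-\hat\eta_t[\observ]=H(v-\hat m)$, so the right-hand side of \eqref{Kalman gain equation} factorizes as $\hat\eta_t[\phi(v)(v-\hat m)']H'R_t^{-1}$. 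Now applying the Gaussian Stein/integration-by-parts identity on the Hilbert space (valid since $\hat\eta_t$ is Gaussian and $\phi$ is a smooth compactly supported Itô testfunction), $\hat\eta_t[\phi(v)(v-\hat m)]=\hat P\,\hat\eta_t[\Frechet{\HHilbert}{\phi}(v)]$, one obtains $\hat\eta_t[\phi(v)(\observ-\hat\eta_t[\observ])']R_t^{-1}=\hat\eta_t[\langle\Frechet{\HHilbert}{\phi}(v),\hat P H'R_t^{-1}\rangle_{\HHilbert}]=\hat\eta_t[\langle\Frechet{\HHilbert}{\phi},\Kalman(\cdot,\hat\eta_t)\rangle_{\HHilbert}]$, which is precisely the left-hand side of \eqref{Kalman gain equation}.

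The main technical obstacle is that $\phi_w(v)=\langle w,v\rangle_{\HHilbert}$ is not a genuine Itô testfunction: Definition \ref{Ito function} requires compact support. To apply \eqref{Kalman gain equation} rigorously I would first test against the localized functions $\phi_{w,R}(v):=\chi_R(\norm{v}{\HHilbert}^2)\langle w,v\rangle_{\HHilbert}$, where $\chi_R$ is a smooth cutoff equal to $1$ on $[0,R]$ and vanishing outside $[0,R+1]$, and then let $R\to\infty$. The extra terms produced by differentiating the cutoff are supported on $\{R\le\norm{v}{\HHilbert}^2\le R+1\}$ and vanish in the limit provided one has $\hat\eta_t$-integrability of $v\mapsto\norm{v}{\HHilbert}\,\norm{\Kalman(v,\hat\eta_t)}{}$ and of $v\mapsto\norm{v}{\HHilbert}\,|\observ(v)|$; the first follows from the integrability hypothesis, the second from the Lipschitz continuity of $\observ$ together with the robust variance bound \eqref{variance bound tilde u} derived for the FPF marginals in exactly the same way as for the mean field EnKBF. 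The analogous approximation is needed for the Stein-identity step in the Gaussian case, but there the Cameron–Martin structure makes the limit straightforward.
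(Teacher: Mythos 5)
Your proposal is correct and follows essentially the same route as the paper: the middle identity is obtained by plugging linear test functions into the gain equation \eqref{Kalman gain equation} (the paper uses the basis functionals $\phi_i(v)=\langle\nu_i,v\rangle_{\HHilbert}$, you use arbitrary $w\in\VHilbert$ plus density, which is equivalent), and the linear Gaussian claim is handled by Gaussian integration by parts, which the paper merely cites while you spell it out. Your cutoff argument addressing that linear functionals are not compactly supported Itô testfunctions is a careful refinement of a point the paper silently glosses over, not a different method.
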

		\begin{proof}
			For any $i\in\N$ we set $\phi_i(v):=\left\langle\nu_i,v\right\rangle_{\HHilbert}$ as a testfunction in the gain equation \eqref{Kalman gain equation}, then we have
			\begin{align*}
				&\left\langle\nu_i,\hat{\eta}_t\left(\Kalman(\cdot,\hat{\eta}_t)\right)\right\rangle_{\HHilbert}
				=
				\hat{\eta}_t\left(\left\langle\Frechet{\HHilbert}{\phi_i}~,~\Kalman(\cdot,\hat{\eta}_t)\right\rangle_{\HHilbert}\right)
				=
				\hat{\eta}_t\left(\phi_i\left(\observ-\hat{\eta}[\observ]\right)'\right)R^{-1}_t
				\\&=
				\left\langle\nu_i,\mathbb{C}_{\hat{\eta}_t}\left[\mathrm{id}_{H}, \observ\right]R^{-1}_t\right\rangle_{\HHilbert}.
			\end{align*}
			
			Since this holds for any $i\in\N$ this indeed shows the validity of \eqref{constant gain approximation}. The second claim follows from Gaussian integration by parts as in \cite{Bogachev}.
		\end{proof}.
		
		Identity \eqref{constant gain approximation} is the reason why the mean field EnKBF  is sometimes referred to as the constant gain approximation (to the FPF) \cite{TaghvaeiDeWiljesEtAl}. It may be of interest for the statistical analysis of the EnKBF, as it provides a link to the Bayesian filtering problem. However such an analysis is out of the scope of this paper and has so far only been attempted in \cite{CarrilloHoffmannStuartVaes} in finite and time discrete settings under restrictive assumptions.  	
	\end{appendices}

\end{document}